\newtheorem{theorem}{Theorem}[section]
\newtheorem{rem}[theorem]{Remark}
\newtheorem{lemma}[theorem]{Lemma}
\newtheorem{proposition}[theorem]{Proposition}
\newcommand{\interior}{\operatorname{int}}
\newcommand{\dist}{\operatorname{dist}}
\newcommand{\diam}{\operatorname{diam}}
\newcommand{\R}{\mathbb{R}}
\newcommand{\rot}{\operatorname{rot}}
\newcommand{\pushright}[1]{\ifmeasuring@#1\else\omit\hfill$\displaystyle#1$\fi\ignorespaces}
\newcommand{\pushleft}[1]{\ifmeasuring@#1\else\omit$\displaystyle#1$\hfill\fi\ignorespaces}
\theoremstyle{definition} 
\newtheorem{mydef}{Definition}[section]
\begin{document}

\title{Chemical distance in the Poisson Boolean model with regularly varying diameters}

\author{Peter Gracar and Marilyn Korfhage}

\maketitle
\begin{abstract}\noindent\textbf{Abstract:}
We study the Poisson Boolean model with convex bodies which are rotation-invariant distributed. We assume that the convex bodies have regularly varying diameters with indices $-\alpha_1\geq \dots\geq-\alpha_d$ where $\alpha_k >0$ for all $k\in\{1,\dots,d\}.$ It is known that a sufficient condition for the robustness of the model, i.e.\ the union of the convex bodies has an unbounded connected component no matter what the intensity of the underlying Poisson process is, is that there exists some $k\in\{1,\dots,d\}$ such that $\alpha_k<\min\{2k,d\}$. 
To avoid that this connected component covers all of $\mathbb{R}^d$ almost surely we also require $\alpha_k> k$ for all $k\in\{1,\dots,d\}$. We show that under these assumptions, the chemical distance of two far apart vertices $\mathbf{x}$ and $\mathbf{y}$ behaves like $c\log\log|x-y|$ as $|x-y|\rightarrow \infty$, with an explicit and very surprising constant $c$ that depends only on the model parameters. We furthermore show that if there exists $k$ such that $\alpha_k\leq k$, the chemical distance is smaller than $c\log\log|x-y|$ for all $c>0$ and that if $\alpha_k\geq\min\{2k,d\}$ for all $k$, it is bigger than $c\log\log|x-y|$ for all $c>0$.
\end{abstract}

\renewcommand{\contentsname}{Contents}
\tableofcontents

\section{Introduction and result}\label{Sect:Introduction}

A key topic in percolation theory and, more broadly, the study of geometrically embedded random graphs is the relationship between the Euclidean distance of two points and their graph distance, commonly referred to as the chemical distance. This problem has been extensively studied since the work of Grimmett and Marstrand~\cite{Grimmett1990}, particularly in the context of Bernoulli percolation, with contributions from Antal and Pisztora~\cite{Antal1996}, as well as Garet and Marchand~\cite{Garet2004, Garet2007}. Research has also extended to models incorporating long-range interactions, such as random interlacements (\v{C}ern\'y and Popov~\cite{Cerny2012}), its vacant set, and the Gaussian free field (Drewitz et al.~\cite{Drewitz2014}).

In the supercritical phase of these models, when points belong to the unbounded connected component and are sufficiently far apart, their Euclidean and chemical distances are typically of the same order. The general conditions under which the percolation models in $\mathbb{Z}^d$ exhibit this behaviour are discussed in~\cite{Drewitz2014}. However, introducing additional long-range edges can significantly alter this relationship, leading to cases where the graph distance scales as a power of the logarithm or even as an iterated logarithm of the Euclidean distance. When the latter occurs, the graph is said to be \emph{ultrasmall}. This phenomenon has been observed, for example, in geometric random graphs with long edges and scale-free degree distribution \cite{Gracar2022a}.

In this paper, we study instead the chemical distance in the Poisson Boolean model with convex grains and establish a universal limit theorem for typical distances in such graphs when the model is robust (see below).
We start with a homogeneous Poisson point process $\mathscr{P}$ on $\mathbb{R}^d$ with $d\geq2$ and positive intensity $u$. We attach to every point $x\in\mathscr{P}$ a mark $\tilde{C}_x$ which is an i.i.d.\ copy of a random, rotation invariant distributed convex body $C$, where $C$ is such that it includes a ball of fixed radius $\epsilon>0$. Setting $C_x:=x+\tilde{C}_x$, one can look at the union of these convex bodies, defined as
\begin{equation*}
    \mathscr{C} = \bigcup\limits_{x\in\mathscr{P}} C_x.
\end{equation*}
It is also possible to naturally define a graph $\mathscr{G}:=(\mathscr{P},\mathscr{E})$ with the edge set $\mathscr{E}\subset\mathscr{P}\times \mathscr{P}$ as follows. We say an edge between two distinct points $x,y\in\mathscr{P}$ exists if and only if the corresponding convex bodies intersect, i.e.\ $C_x\cap C_y\neq\emptyset$. This model has been the focus of study by several authors, see for example the survey of Boolean models in \cite{Meester1996a,Hug2023}. 
Studying the behaviour of $\mathscr{C}$ as a result of properties of $C$, different connectivity regimes arise, namely a \emph{dense}, a \emph{robust} and a \emph{non-robust} regime. We say the model is dense if and only if $\mathscr{C}=\mathbb{R}^d$ almost surely. In \cite{Gracar2024a} it is shown that this is a property that does not depend on the intensity $u$ and it occurs if and only if $\operatorname{Vol}(C)\not\in\mathcal{L}^1$ and that it is a characteristic which gives information about the corresponding random graph $\mathscr{G}$, the covering of the process, and $\mathscr{C}$. Above, $\operatorname{Vol}(A)$ is the standard $d$ dimensional Lebesgue measure of a set $A\subset\mathbb{R}^d$. We can observe robustness, i.e.\ the model being robust, if and only if for every $u>0$ there exists an unbounded connected component of $\mathscr{C}$ almost surely. In all other cases we say the model is non-robust, that is, there exists a critical intensity $u_c>0$ such that for $u<u_c$ there exists no unbounded connected component of $\mathscr{C}$. 
Beyond \cite{Gracar2024a}, this behaviour has also been studied in \cite{Hall1985,Gouere2008} where the authors studied the Boolean model with balls, in \cite{Roy2002,Gouere2023} where the Boolean model with convex shapes, and finally in \cite{Teixeira2017} where ellipses percolation in $2$ dimensions was studied.

In this work, we focus on the graph $\mathscr{G}$ and investigate how the chemical distance between two points of $\mathscr{G}$ compares to their euclidean distance. This relationship has been studied for related models in \cite{Gracar2022a, Hao2023} and for our case with $d=2$ and the convex body an ellipse with a heavy tailed long axis and a fixed length short axis in \cite{Hilario2021}.

The model we will work with was first introduced in \cite{Gracar2024a} under the name \emph{Poisson-Boolean base model with regularly varying diameters} and we now proceed to formally define it. In the following definition and throughout this work, we use $|\cdot|$ to denote the euclidean norm of a vector in $\mathbb{R}^d$. Note that the first definition introduces a sequence of diameters and that this is the only meaning of diameters in this paper.
\begin{mydef}
     Let $K\subset\mathbb R^d$ be a convex body, i.e.\ a compact convex set with nonempty interior. We define a decreasing sequence of diameters \smash{$D_K^{_{(1)}},\dots,D_K^{_{(d)}}$} iteratively, to roughly describe the shape of the convex body, as follows. The \emph{first diameter}, or just \emph{diameter} of $K$, is the classical diameter of a set in the euclidean space, i.e.\
\begin{equation*}
    D_K^{_{(1)}}:= \diam(K) = \max\{|x-y|\,:\, x,y \in K\}.
\end{equation*}
Let $x,y$ be any (measurable) choice from $K$ such that $|x-y|=\diam(K)$. We define $p^{_{(1)}}_{K}:=\tfrac{x-y}{|x-y|}$ as the \emph{orientation} of $D_K^{_{(1)}}$. Defining now $P_H(B)$ as the orthogonal projection of $B\subset\mathbb{R}^d$ onto the linear subspace $H\subset \mathbb{R}^d$ we define the \emph{second diameter} as 
\begin{equation*}  
    D_K^{_{(2)}}:=\diam\Bigl(P_{H_{p^{_{(1)}}_K}}(K)\Bigr),
\end{equation*} 
where \smash{$H_{p^{_{(1)}}_K}$} is a hyperplane (without loss of generality containing the origin) perpendicular to the orientation $p^{_{(1)}}_{K}$. 
We define the orientation of $D_K^{_{(2)}}$ by $p^{_{(2)}}_{K}\!\in H_{p^{_{(1)}}_K}$ analogously and let \smash{$H_{p^{_{(2)}}_K}$} be a hyperplane (without loss of generality containing the origin) in \smash{$ H_{p^{_{(1)}}_K}$} that is perpendicular to $p^{_{(2)}}_{K}$.
For any $2\leq i <d$ and given \smash{$H_{p^{_{(i)}}_K}$}, the $(i+1)$st diameter is 
\begin{equation*}
    D_K^{_{(i+1)}} := \diam \Bigl( P_{H_{p^{_{(i)}}_K}}(K)\Bigr).
\end{equation*}
We also define $p^{_{(i+1)}}_K$ again to be the corresponding orientation.
\end{mydef}

We denote by $\mathcal{C}^d$ the space of convex bodies in $\mathbb{R}^d$ equipped with the Hausdorff metric. Let $\mathbb{P}_C$ be the law on $\mathcal{C}^d\times \mathbb{R}^d$ such that for $\mathbb{P}_C$-almost every $(C,m)$ the point $m$ is in the $\epsilon$-interior of $C$, which exists since we assume that there exists a fixed $\epsilon>0$ for which the ball of radius $\epsilon$ is completely included in $C$. Additionally we assume that $\mathbb{P}_C$ is rotation invariant with respect to rotations around the origin. The Poisson-Boolean base model with regularly varying diameters is then defined as follows. 

\begin{mydef}  \label{Def:model}
    The \emph{Poisson-Boolean base model with regularly varying diameters} is the Poisson point process on $$\mathcal{S}:=\mathbb{R}^d\times \big(\mathcal C^d\times \mathbb{R}^d \big) $$
    with intensity $$  u \, \lambda \otimes\mathbb P_C $$
    where $\lambda$ is the Lebesgue measure. In addition to that we assume that $\mathbb{P}_C$ is given such that the marginal distributions of the diameters $D_C^{(1)},\dots,D_C^{(d)}$ of $C$ are regularly varying with indices $-\alpha_1\geq -\alpha_2\geq\dots\geq-\alpha_d$ with $\alpha_k>0$ for all $k\in\{1,\dots,d\}$, i.e.\ 
    \begin{equation*}
	\lim\limits_{r \rightarrow \infty} \frac{\mathbb{P}(D_C^{_{(i)}}\geq cr)} {\mathbb{P}(D_C^{_{(i)}}\geq r)} = c^{-\alpha_i} \text{ for all $c>1$.}
	\end{equation*}   
    Setting $\alpha_i=\infty$ for some $i\in\{1,\dots,d\}$ also covers the case of bounded diameters $D_C^{_{(i)}}$. 
    
    We denote by $\mathcal{X}$ the corresponding point process and call any $\mathbf{x}\in\mathcal{X}$ a \emph{vertex} and its first component~$x\in\mathbb R^d$
    its \emph{location}. We denote the Poisson point process of these locations $\mathscr P$. The second component of $\mathbf{x}$ is denoted by $(\tilde C_x, m_x)$ and we call $\tilde C_x$ the \emph{grain at $x$ with interior point $m_x$}. We denote its diameters by $D_x^{_{(1)}},\dots,D_x^{_{(d)}}$
    and the corresponding directions by $p_x^{_{(1)}},\dots,p_x^{_{(d)}}$. $C_x$ for $x\in\mathscr{P}$ is defined as previously, i.e.\ $C_x:=x+\tilde C_x$. 
\end{mydef}

For this model it is shown in \cite{Gracar2024a} that it is robust if there exists $k\in\{1,\dots,d\}$ such that $\alpha_k<\min\{2k,d\}$ and non-robust if either $\alpha_k>2k$ for all $k\in\{1,\dots,d\}$ and $\operatorname{Vol}(C)\in\mathcal{L}^2$, or if $D^{_{(1)}}\in\mathcal{L}^d$.
The main result of this paper gives us the behaviour of the chemical distance in the robust (but not dense) regime. For that let 
\begin{equation*}
    M:= \big\{k\in\{1,\dots,d-1\}\,: \, \alpha_k\in(k,\min\{2k, d\})\big\},
\end{equation*}
which can intuitively be thought of as the index set of tail exponents that are sufficiently small for their corresponding diameters to be able to affect the chemical distances in the graph. We define $\mathbb{P}_A$ for $A=\{x_1,...,x_n\}\subset \mathbb{R}^d$ for $n\in\mathbb{N}$ to be the probability measure of the Poisson point process under the condition that $x_1,...,x_n\in\mathscr{P}$, i.e.\ the Palm version of the process and $\dist(\mathbf{x},\mathbf{y})$ to be the chemical distance, i.e.\ length of the shortest path between $\mathbf{x},\mathbf{y}\in\mathcal{X}$. Then, the following holds.

\begin{theorem}\label{Theorem}
In the Poisson-Boolean base model with regularly varying diameters with $u>0$ and $\kappa:= \underset{s\in M}{\operatorname{argmax}} \frac{\min\{d-s,s\}}{\alpha_s -s}$ we have for $x,y\in\mathscr{P}$ and $\delta>0$  in the case $M\neq\emptyset$ and $\alpha_k>k$ for all $k\in\{1,\dots,d\}$ that
\begin{equation*}
    \lim\limits_{|x-y|\rightarrow \infty} \mathbb{P}_{x,y}\Bigl( \tfrac{(2-\delta)\log\log|x-y|} {\log\bigl( \frac{\min\{d-\kappa, \kappa \}}{\alpha_{\kappa}-\kappa}\bigr) }\, \leq \,\dist(\mathbf{x},\mathbf{y}) \, \leq\,  \tfrac{(2+\delta)\log\log|x-y|} {\log\bigl( \frac{\min\{d-\kappa, \kappa \}}{\alpha_{\kappa}-\kappa} \bigr)}\, \, \Big|\, \, x\leftrightarrow y \Bigr) =1.
\end{equation*}
Furthermore if there exists $k\in\{1,\dots,d\}$ such that $\alpha_k\leq k$ we have that $\dist(\mathbf{x},\mathbf{y})$ is smaller than $c\log\log|x-y|$ for all $c>0$ with high probability. In all other cases, namely $\alpha_k\geq\min\{2k,d\}$ for all $k\in\{1,\dots,d\}$, we have that $\dist(\mathbf{x},\mathbf{y})$ is bigger than $c\log\log|x-y|$ for all $c>0$. 
\end{theorem}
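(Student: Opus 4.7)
The plan is to prove matched upper and lower bounds of order $\log\log|x-y|$ for the main statement, then dispatch the two edge regimes with softer arguments. For the upper bound I would construct a path by a hierarchical greedy scheme, in the spirit of the ultrasmall-world arguments used in scale-free percolation and in the two-dimensional ellipse model of~\cite{Hilario2021}. Starting from $\mathbf{x}$, I grow a path by iteratively hopping to a neighbor whose $\kappa$-th diameter is as large as possible. The key geometric estimate is: if my current grain has $\kappa$-th diameter of order $r$, the probability to find, within reach, a Poisson point whose grain has $\kappa$-th diameter at least $r^{\gamma}$ scales roughly as $r^{\min\{d-\kappa,\kappa\}}\,\mathbb{P}(D^{(\kappa)}\geq r^{\gamma}) \sim r^{\min\{d-\kappa,\kappa\}-\gamma\alpha_\kappa}$, where the exponent $\min\{d-\kappa,\kappa\}$ is a dimensional count coming from the rotation-invariant distribution of the orientation $p^{(\kappa)}$: either a $\kappa$-dimensional family of viable centers along the long directions, or a $(d-\kappa)$-dimensional family in the complementary directions, whichever is smaller. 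Feasibility then requires $\gamma<\gamma_\kappa:=\min\{d-\kappa,\kappa\}/(\alpha_\kappa-\kappa)$, and iteration grows the reach as $r^{\gamma^n}$. Running the construction simultaneously from $\mathbf{x}$ and $\mathbf{y}$ and matching when the two reaches meet in the middle requires roughly $\log\log|x-y|/\log\gamma_\kappa$ steps per side, yielding the factor $2$; the choice $\kappa=\operatorname{argmax}_{s\in M}\gamma_s$ picks the diameter index along which the doubly-exponential growth is fastest.

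For the lower bound I would use a first-moment argument via the multivariate Mecke formula, enumerating self-avoiding paths $\mathbf{x}=\mathbf{x}_0,\mathbf{x}_1,\dots,\mathbf{x}_n=\mathbf{y}$. Integrating out each intersection event $C_{\mathbf{x}_i}\cap C_{\mathbf{x}_{i+1}}\neq\emptyset$ via Minkowski-sum volume estimates and the joint distribution of the diameters, and decomposing the sum by the scale of the largest $\kappa$-th diameter appearing along the path, one shows that the expected number of paths of length below $(2-\delta)\log\log|x-y|/\log\gamma_\kappa$ tends to zero, so no such path exists with high probability. A case analysis is needed to verify that no mixed strategy combining several diameter indices simultaneously can beat the single-index optimum determined by $\kappa$.

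The two edge regimes are handled more crudely. If $\alpha_k\leq k$ for some $k$, the $k$-th diameter has tails so heavy that a variant of the greedy construction terminates in a sub-$\log\log$ number of steps (the recursion rate $\gamma_k$ becomes formally infinite), giving $\dist(\mathbf{x},\mathbf{y})=o(\log\log|x-y|)$ with high probability. If instead $\alpha_k\geq\min\{2k,d\}$ for all $k$, the model is non-robust by~\cite{Gracar2024a}, so two faraway Poisson points lie in different components almost surely; a standard subcritical-type cluster estimate forces the chemical distance, conditional on connection, to grow at least polynomially in $|x-y|$, which dwarfs any multiple of $\log\log|x-y|$.

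The main obstacle throughout is the sharp geometric exponent $\min\{d-\kappa,\kappa\}$: verifying that it emerges tightly in both the upper and lower bounds---and in particular excluding in the lower bound the possibility that combining several diameter indices beats the single-index optimum---requires a careful joint analysis of the rotation-invariant distribution of the orientations $p^{(1)},\dots,p^{(d)}$ and of the partial projections defining the diameters $D^{(1)},\dots,D^{(d)}$. Establishing this count rigorously, especially the switch between the ``long'' and ``short'' sides encoded in the $\min$, is where the bulk of the technical work will lie.
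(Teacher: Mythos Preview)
Your overall architecture for the main case---greedy doubly-exponential growth from both endpoints for the upper bound, and a moment bound on short paths for the lower bound---matches the paper. Two points deserve correction.

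\textbf{Lower bound.} A direct Mecke first-moment count of all self-avoiding paths of length $n$ is unlikely to close: the entropy of paths competes with the decay of intersection probabilities, and there is no obvious reason the expected count should vanish. The paper instead uses a \emph{truncated} first moment. One fixes a growth profile (here $D_{x_i}^{(\kappa)}\ge c f_i$ together with an upper bound $D_{x_i}^{(1)}\le c f_i^{2\alpha_\kappa/\alpha_1}$, where $f_i=f_{i-1}^{\gamma_\kappa-\epsilon}$) and calls a path \emph{good} if both its halves, read from $x$ and from $y$, satisfy it. Good paths of length $\le 2\Delta$ are then \emph{deterministically} too short to bridge the Euclidean gap $|x-y|$, so their contribution is zero rather than merely small. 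What remains is the probability that some path first violates the profile at step $n$; this is bounded by $\mathbb{P}((A_n^x)^c\mid A_{n-1}^x)$, a single conditional probability rather than a path count, and it is summable in $n$. Your ``decompose by largest diameter'' idea is in this spirit, but you should make explicit that the argument controls the probability of a first violation, not an expected number of paths. Incidentally, this structure also explains why no separate ``mixed-index'' analysis is needed: the profile is phrased only in terms of $D^{(\kappa)}$ and $D^{(1)}$, and any path either obeys it (and is too short) or first fails it at some step (and that failure probability is what one bounds).

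\textbf{The regime $\alpha_k\ge\min\{2k,d\}$ for all $k$.} Your argument here is incorrect. First, this condition does \emph{not} imply non-robustness: the sufficient conditions in~\cite{Gracar2024a} are strictly stronger (e.g.\ $\alpha_k>2k$ for all $k$ together with a moment condition), and the boundary cases are not settled. Second, even in the non-robust regime the model percolates for $u>u_c$, so conditioning on $\{x\leftrightarrow y\}$ is nontrivial and you cannot invoke a subcritical cluster estimate. The paper's route is a coupling: for each realisation, replace every first diameter $D_x^{(1)}$ by $G^{-1}(F(D_x^{(1)}))\vee D_x^{(1)}$, where $F$ is its distribution function and $G$ is the distribution function of a regularly varying law with index $-(2-\rho)$. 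The enlarged grains still contain the originals, so chemical distances in the enlarged model are lower bounds for the original ones; but the enlarged model has $\alpha_1=2-\rho$, hence $M=\{1\}$, and the main lower bound gives $\dist\ge (2-\delta)\log\log|x-y|/\log(1/(1-\rho))$. Sending $\rho\downarrow 0$ makes the constant diverge. The case $\alpha_k\le k$ is handled by the mirror-image coupling (shrink to $\alpha_k=k+\rho$ and quote the upper bound), which is what you described.
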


In the theorem, $x\leftrightarrow y$ denotes the event that there exists a path in $\mathscr{G}$ between $x$ and $y$, or equivalently that $x$ and $y$ belong to the same connected component of $\mathscr{C}$.

\paragraph{Discussion of result.} Before proceeding with the rest of the paper, we quickly discuss the unusual nature of the scaling of chemical distances. More precisely, we comment on the scaling factor $\log\bigl( \frac{\min\{d-\kappa, \kappa \}}{\alpha_{\kappa}-\kappa}\bigr)/2$ that relates the chemical to the euclidean distance. The most illustrative comparison comes from so-called \emph{scale-free networks}. These networks tend not to be modelled as spatial graphs, so chemical distances must be given in terms of the number of vertices in the graph rather than the euclidean distance, with the number of vertices $N$ roughly corresponding to $|x-y|^d$. Then, the mean-field analogue of the classical Boolean model with Pareto($-\gamma/d$) radius distribution exhibits ultrasmallness with the scaling factor $\log\bigl(\frac{\gamma}{1-\gamma}\bigr)/4$, whenever $\gamma\in(1/2,1)$ - see for example \cite{Dereich2012}. Since this condition on $\gamma$ also corresponds to the model being robust, one would naturally expect a similar correspondence between the parameter that leads to robustness and the scaling factor to exist in our model. Surprisingly, this is not the case.

As mentioned above Theorem \ref{Theorem}, robustness in our model emerges if there exists at least one index $\alpha_k$ such that the associated diameter is sufficiently heavy tailed relative to both the spatial dimension $d$ and the relative ordering of the diameter $k$ (times two). The natural assumption would therefore be that the scaling of the chemical distance depends on the largest or possibly smallest diameter that satisfies this requirement. Instead, the scaling is determined by the diameter that is, in relative terms (i.e.\ how $\alpha_k-k$ compares to $\min\{d-k,k\}$), the most heavy-tailed; see the definition of $\kappa$ in Theorem \ref{Theorem}. 
To understand how this comes to be, one has to remember that each ``step'' in a path requires two convex bodies to intersect. Consider for the sake of argument that $M=\{1,\dots,d-1\}$ and $\alpha_k-k=c\min\{d-k,k\}$ for some constant $c>0$ and all $k\in M$, that is, all $k\in M$ equal $\kappa$. Consider also two vertices whose locations are at a fixed large distance. Then, using basic trigonometry one can calculate the probabilities (in terms of random rotations) that two chosen diameters (one for each body) are roughly co-linear. When combined with the distributions of said two diameters and accounting for the combinatorics of different diameter pairings, one obtains (up to constants) that each diameter contributes equally to the probability of an intersection. If, however, one of the $d-1$ diameters (say $k$) is made even slightly more heavy-tailed, and consequently $\kappa\triangleq k$, as the distance between the two vertices tends to infinity, this diameter's contribution to the probability of an intersection asymptotically dominates all others.

We find that this unexpected behaviour, as well as the decoupling of the dense and robust regimes shown in \cite{Gracar2024a}, makes the Poisson-Boolean base model with regularly varying diameters uniquely interesting among it's Boolean model cousins, in particular since it exhibits these features without requiring the introduction of ``long edges'' (in the sense of long-range percolation, see for example \cite{Gracar2022c}).

\paragraph{Structure of this work.}
In Section~\ref{Sect:geometric} we prove several fairly general results that will be used to prove the bounds of the theorem. 
We prove the lower bound of the theorem in Section~\ref{Sect:lower}, by using \emph{truncated first moment method} similar to the one from \cite{Gracar2022a} in combination with an adapted version of the construction of infinite paths from \cite{Gracar2024a}. In addition to that, we comment on the behaviour of the chemical distance for the case $\alpha_k\geq \min\{2k,d\}$ for all $k\in\{1,\dots,d\}$ in Remark~\ref{rem:langsam}.
We then prove the upper bound in Section~\ref{Sect:upper} by adapting again the construction from \cite{Gracar2024a} and using a similar sprinkling argument as used in \cite{Gracar2022a}. We also comment on the case where there exists $k\in\{1,\dots,d\}$ such that $\alpha_k\leq k$ in Remark~\ref{rem:schnell}.
We conclude with some examples of models for which our result holds in Section~\ref{Sect:examples}. As we will see, our result extends the work of \cite{Hilario2021} to higher dimensions and refines their result for the case $d=2$.

Note that throughout the proofs we use $c\in(0,\infty)$ as a generic constant which may change its value at every inequality, but is always finite and may depend only on $d$, $\kappa$, $\alpha_{\kappa}$, $\epsilon$ and on $\varepsilon>0$ appearing in the Potter bounds.

\section{Geometric tools}\label{Sect:geometric}

We begin by providing several geometric results that will allow us to prove the claimed bounds. We first note that the smallest convex bodies that have diameters of size $D^{_{(1)}},\dots,D^{_{(d)}}$ are the convex hulls of the endpoints of all of the diameters, i.e.\ the $d$ dimensional convex polytopes with $2d$ vertices. We will, from here on out, refer to these endpoints as the polytope \emph{corners} in order to avoid ambiguity with the vertices of $\mathcal{X}$. We will also use corners to refer to the vertices of boxes for the same reason. In the following, we are interested in sufficiently large  boxes that can be contained in a convex body. Due to the above observation, it follows trivially that if we find such a box for a polytope, this box is also contained in any other convex set with the same diameters and orientation. Consequently, if an arbitrary set intersects this box, it must also intersect both the polytope and any other convex set containing the box. For this reason, we can without loss of generality work for the time being with polytopes only in order to keep the notation and proofs more concise.

\begin{lemma}\label{Lemma:one}
    Let $K\subset\mathbb{R}^d$ be an arbitrary convex polytope with $2d$ corners and diameters $0<l_d\leq\dots\leq l_1<\infty$. Then there exists a box $B:=B(K)\subset K$ which is congruent to 
    \begin{equation*}
        \bigtimes\limits_{i=1}^d [0,2^{-2(d-1)}l_i].
    \end{equation*}
\end{lemma}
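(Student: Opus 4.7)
The plan is to argue by induction on the ambient dimension $d$. For the base case $d=1$, the polytope $K$ is simply an interval of length $l_1$, and since $2^{-2(d-1)}=1$ the statement is immediate. For the inductive step, let $\pi$ denote the projection onto $H_{p_K^{(1)}}$. Because $x_1^+$ and $x_1^-$ differ only along $p_K^{(1)}$ their projections coincide, so I would consider the convex hull $Q$ of the remaining projected corners $\{\pi(x_i^\pm)\}_{i=2}^{d}$; this is a convex polytope in $\mathbb R^{d-1}$ with $2(d-1)$ corners whose successive diameters are $l_2\geq\dots\geq l_d$ in directions $p_K^{(2)},\dots,p_K^{(d)}$, and which is contained in $\pi(K)$. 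The induction hypothesis then produces a box $B'\subset Q\subset\pi(K)$ of sides $2^{-2(d-2)}l_i$ along $p_K^{(i)}$ for $i\geq 2$.

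The decisive geometric step is to lift $B'$ back up into $K$. Over each $y\in\pi(K)$ the fibre $K\cap(y+\R\,p_K^{(1)})$ is an interval $[a(y),b(y)]$ with $a$ convex and $b$ concave on $\pi(K)$, so the fibre-length function $h:=b-a$ is concave and attains its maximum $l_1$ at $y_\ast:=\pi(x_1^+)=\pi(x_1^-)$. I would translate $B'$ within $\pi(K)$ so that it sits in a region where $h$ remains of order $l_1$, then select a concentric sub-box $B''\subset B'$ with transverse sides $2^{-2(d-1)}l_i$ (that is, shrunk by a factor of $\tfrac14$ in every transverse direction). Two separate applications of convexity then complete the lift: by concavity of $h$ every fibre over $B''$ still has length comparable to $l_1$, and by the separate convexity of $a$ and concavity of $b$ one can extract a common sub-interval $I$ along $p_K^{(1)}$ contained in $[a(y),b(y)]$ for every $y\in B''$ and of length at least $2^{-2(d-1)}l_1$. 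The product $B''\times I\subset K$ is then the required box.

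The main obstacle will be carrying out the lifting tightly enough to achieve exactly the factor $\tfrac14$ per dimension that appears in $2^{-2(d-1)}$. Morally this factor splits into two $\tfrac12$'s: one spent shrinking $B'$ so that the fibre-length function remains uniformly large, and the other spent pushing the convex lower envelope $a$ and concave upper envelope $b$ apart so that they bound a common $p_K^{(1)}$-interval. The subtlest point is the placement of $B''$: because $y_\ast$ need not lie inside $Q$, and hence not inside the induction-provided $B'$, the positioning must either exploit the extra room in $\pi(K)\setminus Q$ or use a mildly more general induction hypothesis than the bare $2d$-corner statement of the lemma. Making this placement rigorous, and tracking the worst-case constants through it, is where essentially all of the work lies.
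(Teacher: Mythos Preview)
Your approach is the mirror image of the paper's: both argue by induction on $d$, but the paper peels off the \emph{smallest} diameter direction $p_K^{(d)}$ (building a hyperpyramid over the induction-provided box $B_d$, with a two-case analysis depending on where the apex projects relative to $B_d$), whereas you project out the \emph{largest} direction $p_K^{(1)}$ and lift via the concave fibre-length function. Your route avoids the case split entirely and is arguably cleaner once completed. But, as you yourself flag, the placement of $B''$ is the whole game, and your stated mechanism (``translate $B'$, then take a concentric sub-box'') does not work: when $y_\ast$ lies well outside $Q$, the set $\pi(K)=\mathrm{conv}(Q\cup\{y_\ast\})$ narrows to a point at $y_\ast$ and cannot accommodate any translate of $B'$; and even where $h\ge l_1/2$, the intervals $[a(y),b(y)]$ need not share a long common subinterval (think $a(y)=y$, $b(y)=y+1$).

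The fix is a homothety rather than a translation, combined with one observation you did not record: because $l_1$ is the full diameter of $K$, every point of $K$ has $p_K^{(1)}$-coordinate in an interval $[t^-,t^+]$ of length exactly $l_1$, with $a(y_\ast)=t^-$ and $b(y_\ast)=t^+$; in particular $a,b$ are globally trapped in $[t^-,t^+]$, not merely convex/concave. Now set $B'' := y_\ast + \tfrac14(B'-y_\ast)$, the $\tfrac14$-homothety of $B'$ towards $y_\ast$. This lies in $\pi(K)$ by convexity (both $y_\ast$ and $B'$ do), has transverse sides $\tfrac14\cdot 2^{-2(d-2)}l_i=2^{-2(d-1)}l_i$, and for $z=\tfrac34 y_\ast+\tfrac14 y\in B''$ with $y\in B'$ one gets
\[
a(z)\;\le\;\tfrac34\,a(y_\ast)+\tfrac14\,a(y)\;\le\;\tfrac34\,t^-+\tfrac14\,t^+,\qquad
b(z)\;\ge\;\tfrac34\,t^++\tfrac14\,t^-,
\]
so the common $p_K^{(1)}$-interval $[\max_{B''}a,\min_{B''}b]$ has length at least $\tfrac12(t^+-t^-)=l_1/2\ge 2^{-2(d-1)}l_1$. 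This closes the gap with exactly the claimed constant; neither a translation nor a strengthened induction hypothesis is needed.
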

\begin{proof}
    We consider first the case $d=2$. In this case, it is clear that the line segment of length $l_1$ connecting two of the polytope corners divides the polytope into two triangles. Both triangles have a hypotenuse of length $l_1$. We denote by $\tilde l_2$ (resp.\ $\hat l_2$) the height relative to the hypotenuse of the first (resp.\ second) triangle. The maximum of both is at least $\tfrac{l_2}{2}$ since together they equal $l_2$. Looking now at the bigger of the two triangles and using that triangles are convex, it is clear that there exists a box $B$ of side-lengths at least $\tfrac{l_2}{4}$ and $\tfrac{l_1}{2}$ that is contained in this triangle and therefore also in $K$ (see Figure \ref{fig:polytop_d2}). It can be quickly verified that if this is not possible, than $l_1$ would not have been the first diameter.
\begin{figure}[h!]
    \centering
    \includegraphics[width=1\textwidth]{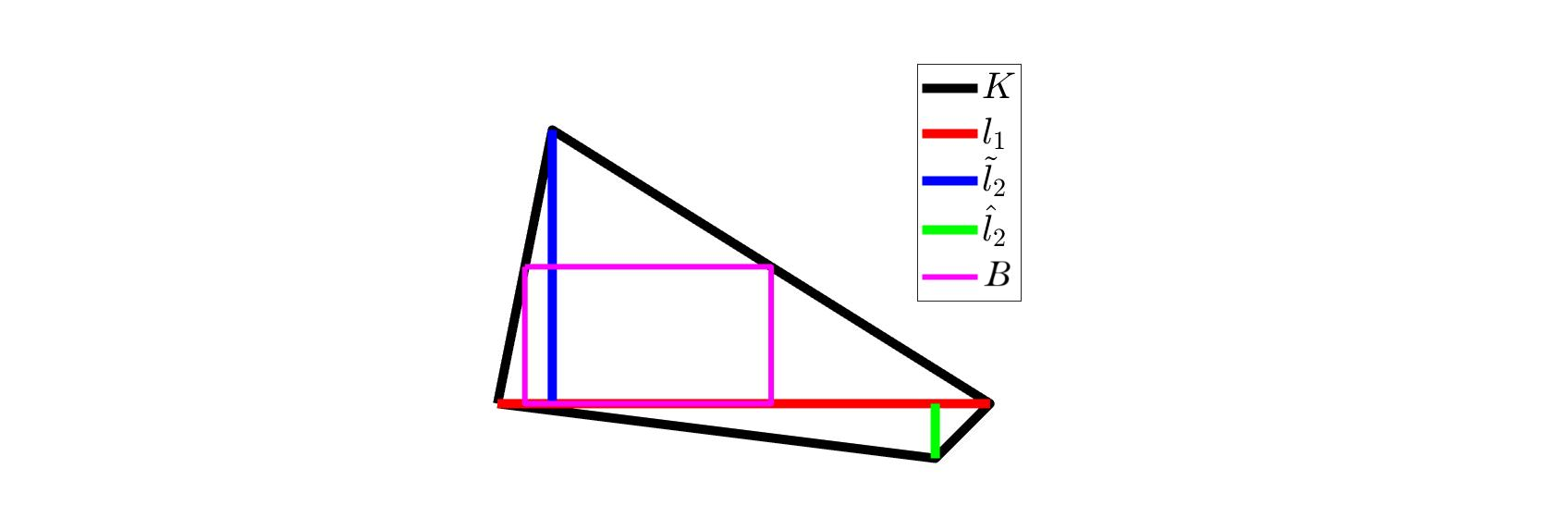} 
    \caption{$l_2=\tilde{l}_2+\hat{l}_2$. $\tilde{l}_2=\max\{\tilde{l}_2,\hat{l}_2\}\geq l_2/2$. $B$ with side-lengths at least $l_1/4$ and $l_2/4$.}
    \label{fig:polytop_d2}
\end{figure}

Assume now that the claim is true for a fixed integer $d\geq 2$. We consider now the induction step for $d+1$. Due to the rotation invariance of the volume in euclidean space, we assume that $l_1,\dots,l_{d+1}$ have orientations $e_1,\dots,e_{d+1}$ corresponding to the canonical basis of $\mathbb{R}^{d+1}$. We know that in the subspace spanned by the directions of the first $d$ diameters, i.e.\ spanned by $e_1,\dots,e_d$, there exists a $d$ dimensional box of side-lengths $2^{-2(d-1)}l_1,\dots,2^{-2(d-1)}l_d$ which we denote by $B_d$. Due to translation invariance, we assume without loss of generality that  $B_d=\bigtimes_{i=1}^d[-2^{-2d+1}l_i\,,\,2^{-2d+1}l_i]\,\times\{0\}$. 
Looking now at the two corners of the polytope which give rise to the $(d+1)$st diameter with length $l_{d+1}$, it is true that at least one of them has distance greater or equal $l_{d+1}/2$ from $\mathbb{R}^d\times\{0\}$, similar to the $d=2$ case above. We focus on the corner which satisfies this and denote it by $p$. Note that just like in the case $d=2$, $p$ has to lie ``above'' a $d$ dimensional box congruent to $\bigtimes_{i=1}^d [0,\ell_i]\times \{0\}$ that includes in its boundary the orthogonal projections of the first $2d$ corners of $K$ onto $\mathbb{R}^d\times\{0\}$, since otherwise at least for one $\ell_i$ for 
$i\in\{1,...,d\}$ could not be the $i$th diameter.
We next construct a box which is included in $K$ using that without loss of generality the $(d+1)$st coordinate of $p$ equals $l_{d+1}/2$.

Looking at the maximal distance of $B_d$ to $p$ in the direction $e_i$ for $i\in\{1,\dots,d\}$, it is clear that this distance is at most $(1-2^{-2(d-1)})l_i$, since $l_1,\dots,l_d$ are the sizes of the first $d$ diameters and $K$ is a polytope. A visualisation in 3 dimensions is given in Figure \ref{fig:distance_of_p}.
\begin{figure}[!h]
    \centering
    \begin{subfigure}[t]{\textwidth}
        \centering
        \begin{subfigure}[b]{0.49\textwidth}
            \centering
            \includegraphics[width=\textwidth]{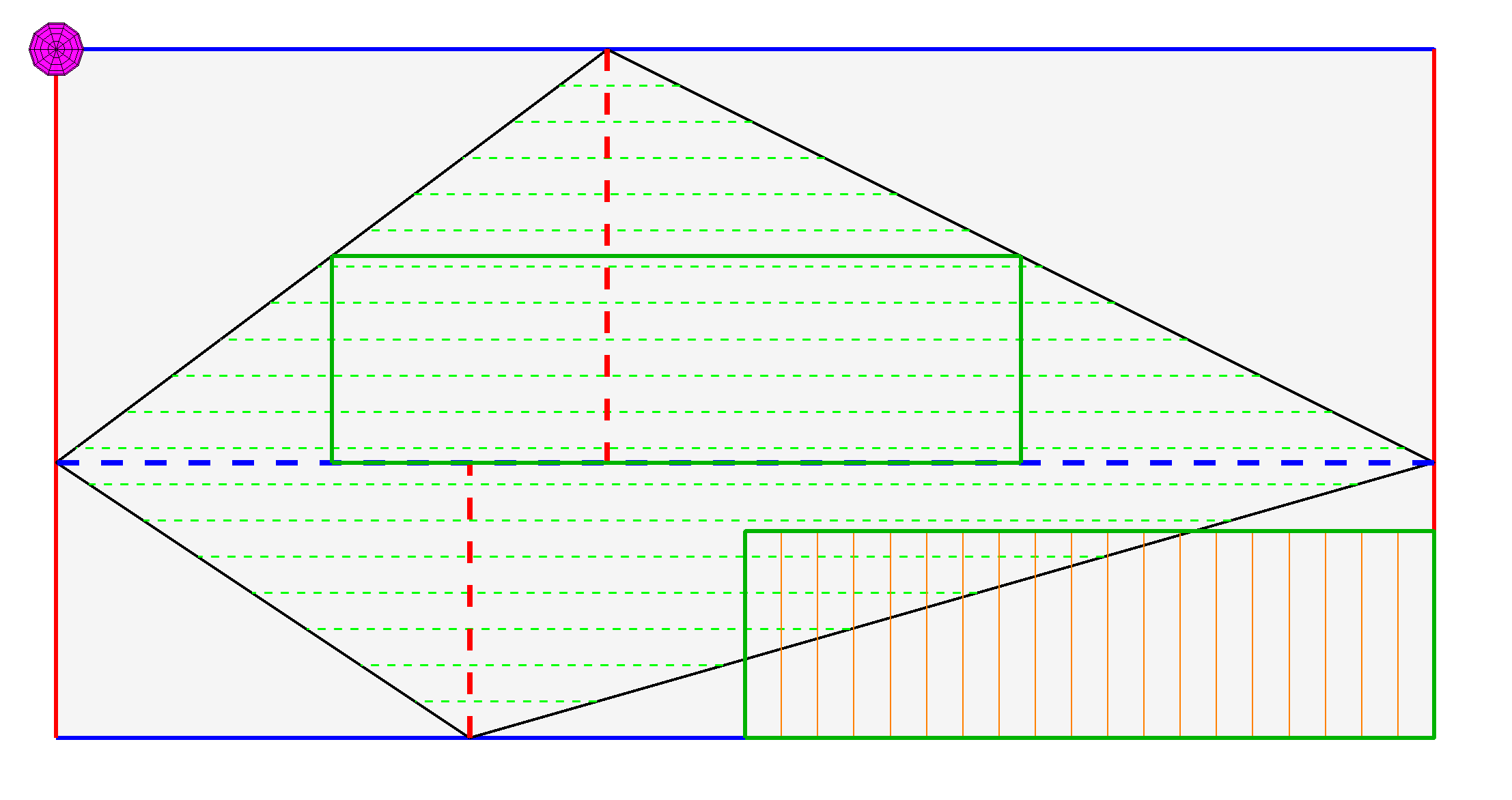}
            \caption{Perspective from above.}
        \end{subfigure}
        \begin{subfigure}[b]{0.49\textwidth}
            \centering
            \includegraphics[width=\textwidth]{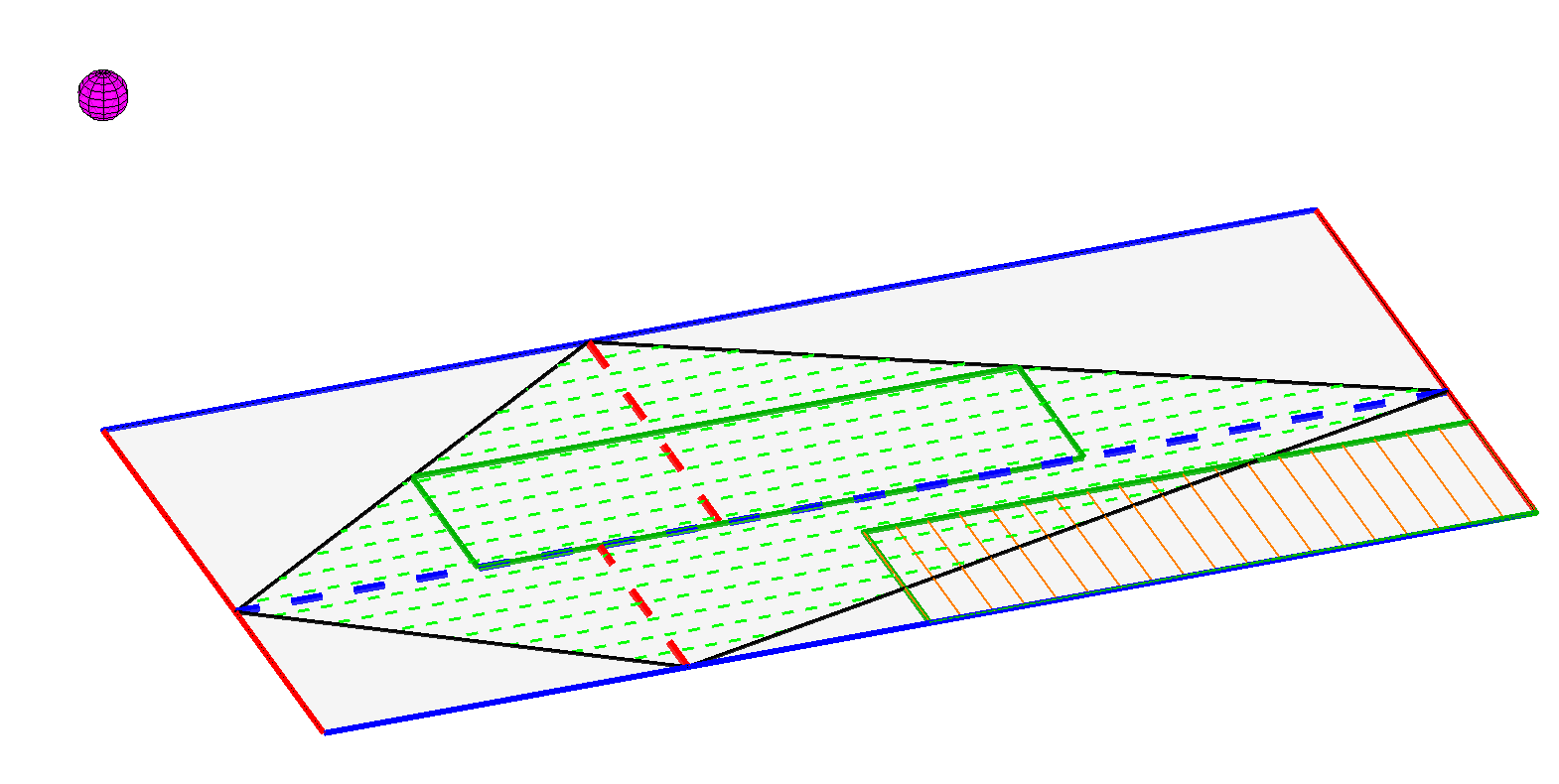}
            \caption{Perspective in 3 dimensions.}
        \end{subfigure}
    \end{subfigure}
    \caption{Visualisation of the maximal distance of $B_d$ and $p$ in the directions $e_i$ for $i\in\{1,\dots,d\}$ for the case $d=2$. The blue and red solid lines represent an upper bound for the polytope with diameters $l_1$ and $l_2$. The black lines and hatched green area denote the polytope given by the first $2d$ corners and the diameters of length $l_1,\dots,l_d$. The green rectangle in this convex set is $B_2$ while the rectangle with green boundary and hatched orange is $B_2$ shifted such that it has the maximal distance to $p$, which is given as the pink point. }
    \label{fig:distance_of_p}
\end{figure}
 We claim that a box which is congruent to \smash{$B:=\bigtimes_{i=1}^{d+1}[0,2^{-2d}l_i]$} exists inside of $K$. To find it, we have to consider two different cases. The first one is that $p^{(i)}\in[-2^{-2d+1}l_i,2^{-2d+1}l_i]$ for all $i\in\{1,\dots,d\}$, i.e.\ the orthogonal projection of $p$ onto $\mathbb{R}^d\times\{0\}$ is contained in $B_d$. Note that the convex hull of $B_d$ and $p$ is a hyperpyramid (see Figure \ref{fig:case1}). Considering now the connection lines of the corners of $B_d$ and $p$ and choosing the midpoints of these lines we obtain that the convex hull of these midpoints and their orthogonal projections onto $\mathbb{R}^d\times\{0\}$ determine a $(d+1)$ dimensional box, which we denote by $\tilde B$. This box is congruent to \smash{$\bigtimes_{i=1}^{d} [0,2^{-2d+1}l_i]\times[0,l_{d+1}/4]$} using the self-similarity property of hyperpyramids. Since this box contains a smaller box which is congruent to the sought after box $B$, we are done with this case. A visualisation of this step for $d+1=3$ is given in Figure~\ref{fig:case1}. 
\begin{figure}[!h]
    \centering
    \begin{subfigure}[t]{\textwidth}
        \centering
        \begin{subfigure}[b]{0.66\textwidth}
            \centering
            \includegraphics[width=\textwidth]{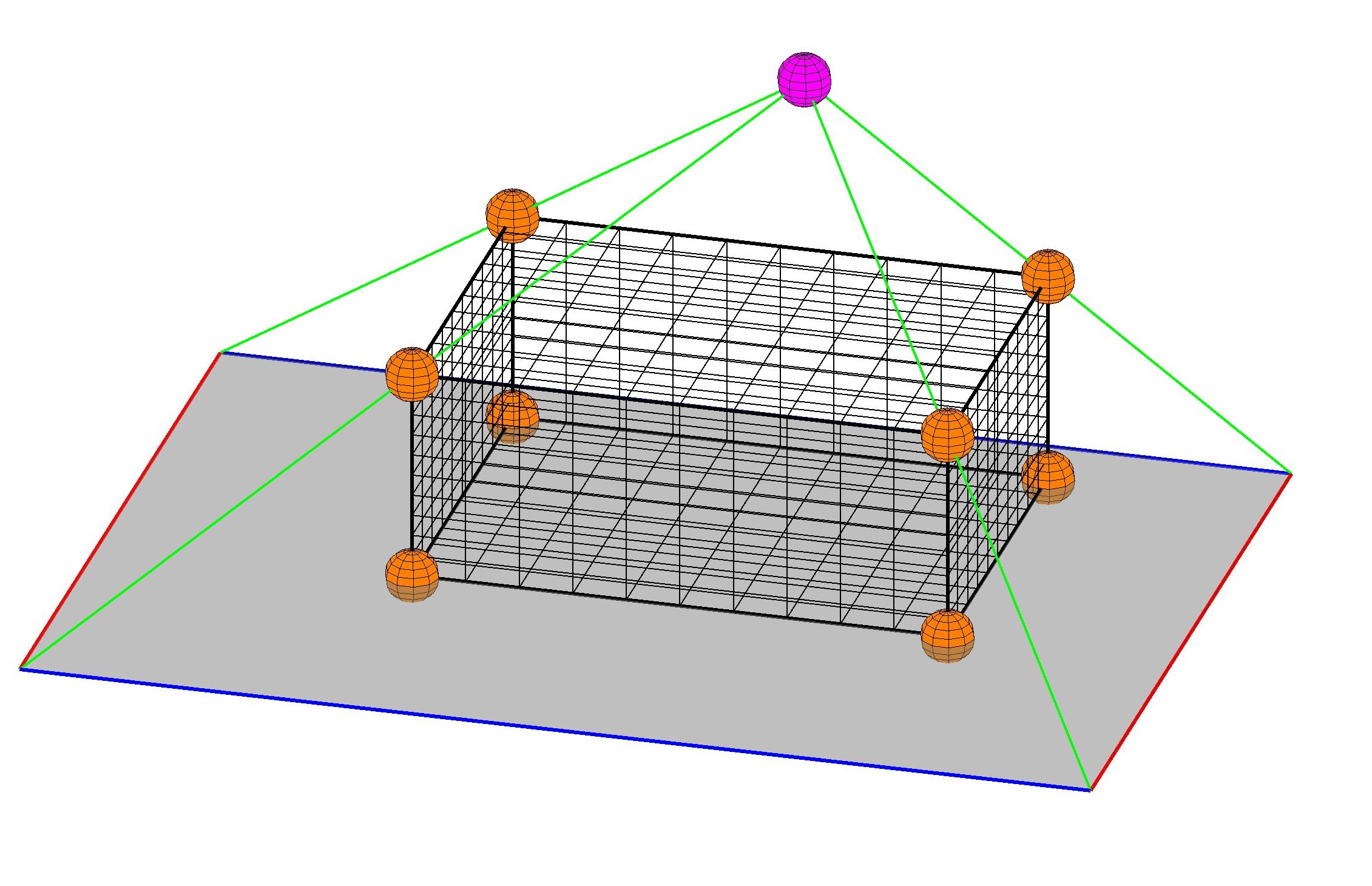}
            \caption{3 dimensional perspective.}
        \end{subfigure}
        \begin{subfigure}[b]{0.33\textwidth}
            \centering
            \includegraphics[width=\textwidth]{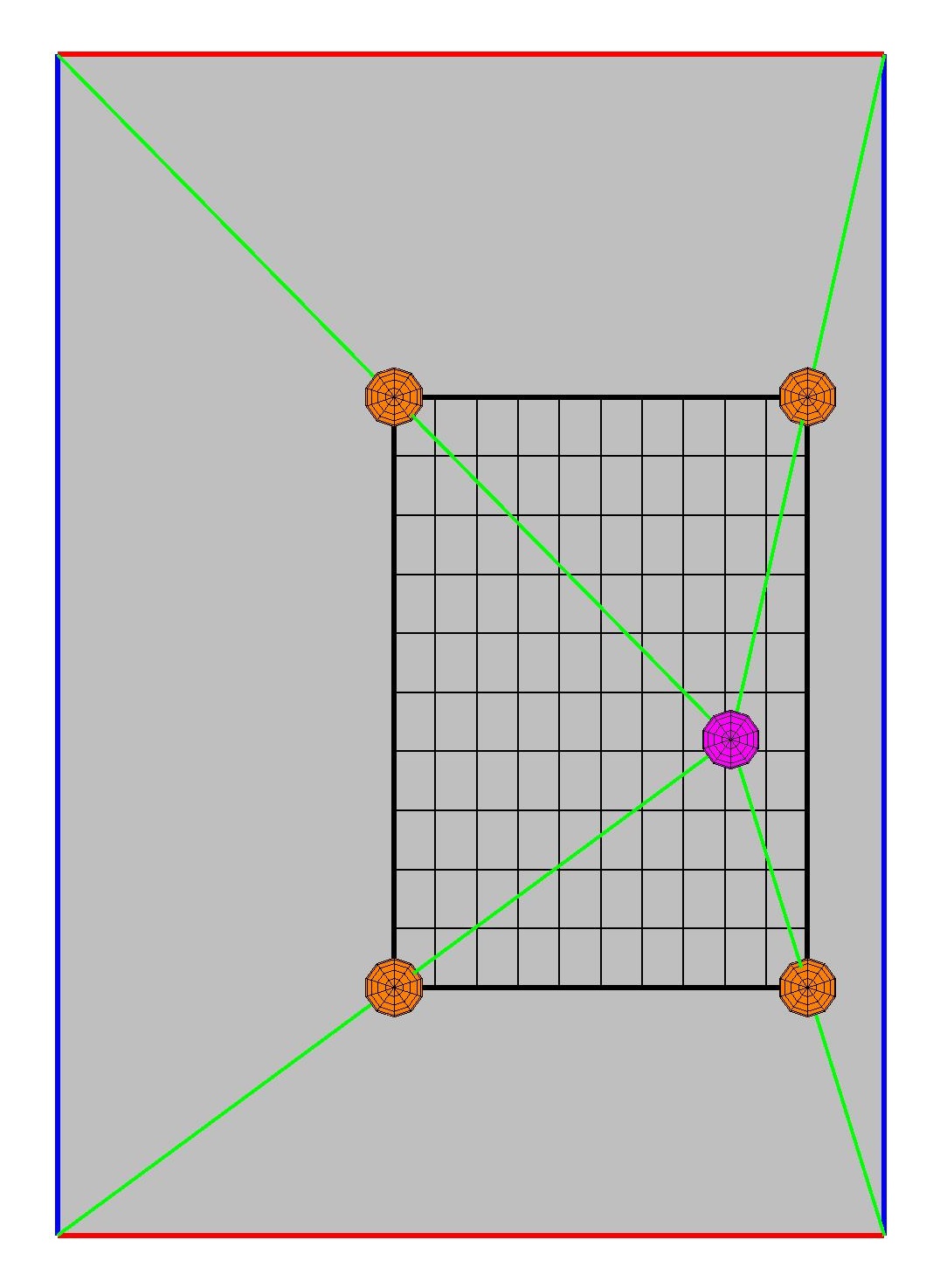}
            \caption{Perspective from above.}
        \end{subfigure}
    \end{subfigure}
    \caption{Example for the induction step for the first case of Lemma \ref{Lemma:one} from $d=2$ to $d+1=3$ from different perspectives. $B_2$ is in gray, $p$ is the pink point, in green are the connection lines of $p$ and the corners of $B_2$, the orange points as the midpoints of these lines and the orthogonal projection of the midpoints, and $\tilde B$ is the box represented by the black mesh.}
    \label{fig:case1}
\end{figure}

In the second case we are dealing with $p$ such that the orthogonal projection of it onto $\mathbb{R}^d\times\{0\}$ does not lie in $B_d$, see Figure \ref{fig:case2}. As we are interested in finding a box which is congruent to the sought after box $B$, it suffices to look for a minimal box and show that this is a suitable choice. We assume therefore as in the previous case, that $p$ has distance $l_{d+1}/2$ to $\mathbb{R}^d\times\{0\}$.

We consider now the worst case scenario for the location of $p$, i.e.\ that it has the maximal distance to the base $B_d$ in each direction $e_i$, $i\in\{1,\dots,d\}$, which is $(1-2^{-2(d-1)})l_i$ as discussed above. For that to be the case, $p$ has to lie in 
\begin{equation*}
    N:=\bigl\{x\in\mathbb{R}^{d+1}\,:\, (\pm(1-2^{-2d+1})l_1,\dots,\pm(1-2^{-2d+1})l_d, l_{d+1}/2) \bigr\}.
\end{equation*}
We will determine now a new vertex which will be used to construct the sought after box. 

Let $y\in B_d$ be such that it has maximal distance to $p$. The coordinates of such a vertex $y$ fulfil $y^{(i)}=\mp2^{-2d+1}l_i$ for all $i\in\{1,\dots,d\}$ and $y^{(d+1)}=0$. 
Due to the symmetry of $B_d$ we can without loss of generality look at $p=((1-2^{-2d+1})l_1,\dots,(1-2^{-2d+1})l_d, l_{d+1}/2)\in N$ and consequently set $y:=(-2^{-2d+1}l_1,\dots,-2^{-2d+1}l_d,0)\in B_d $. Consider now the point 
\begin{equation*}
    z:= 2^{-2(d-1)}(p-y)+y = (2^{-2d+1}l_1,\dots,2^{-2d+1}l_d,2^{-2d+1}l_{d+1})
\end{equation*}
which lies in the convex hull of $B_d$ and $p$, and in particular lies ``above'' $B_d$ (as indicated in the example of Figure~\ref{fig:case2}). We claim even more, namely that the convex hull of $z$ and $B_d$ is a minimal hyperpyramid which is also included in $K$, whereby it is minimal in the sense that its volume is minimal across for all choices of $p$. To see the latter, note that we are considering the worst case location of $p$, but that there exists an actual corner of the polytope that has the same distance from $\R^d\times\{0\}$ as $p$. If one were to construct the point $z$ (call it $\tilde z$) for this corner, one would immediately have that $\tilde z$ lies in $K$ and that the distance of $\tilde z$ to $\R^d\times\{0\}$ is not smaller than that of $z$ (since $z$ has the minimal possible distance for a vertex constructed in this way). 

With $z$ taking the role of $p$ we are back in the first case and using convexity and the midpoints of the connection lines from $z$ to the corners of $B_d$ gives us the desired box. The midpoints and their orthogonal projections are given by
\begin{equation*}
    \tilde{N}:=    \bigl\{x\in\mathbb{R}^{d+1}\,:\,x^{(i)}\in\{0,2^{-2d+1}l_i\}, \forall i\in\{1,\dots,d\}, x^{(d+1)}\in\{0,2^{-2d}\}\bigr\}.
\end{equation*}
The resulting box, denoted as $B$, has side-lengths $2^{-2d}l_1,\dots,2^{-2d}l_{d}$ which concludes the proof.

\begin{figure}[!h]
    \centering
    \begin{subfigure}[t]{\textwidth}
        \centering
        \begin{subfigure}[b]{0.66\textwidth}
            \centering
            \includegraphics[width=\textwidth]{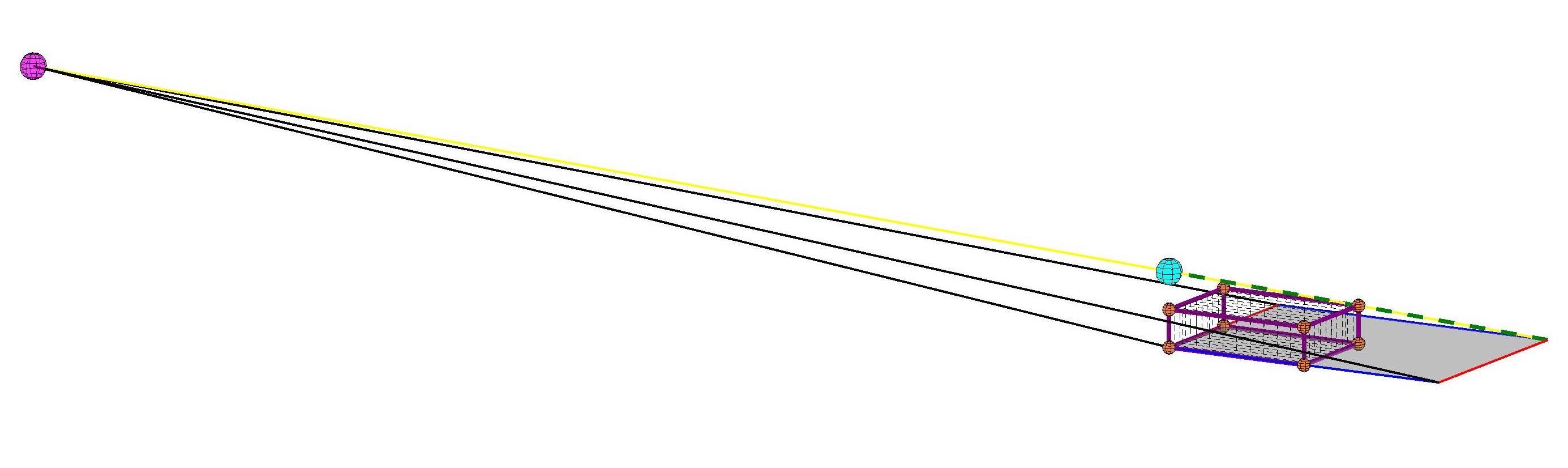}
        \end{subfigure}
        \begin{subfigure}[b]{0.33\textwidth}
            \centering
            \includegraphics[width=\textwidth]{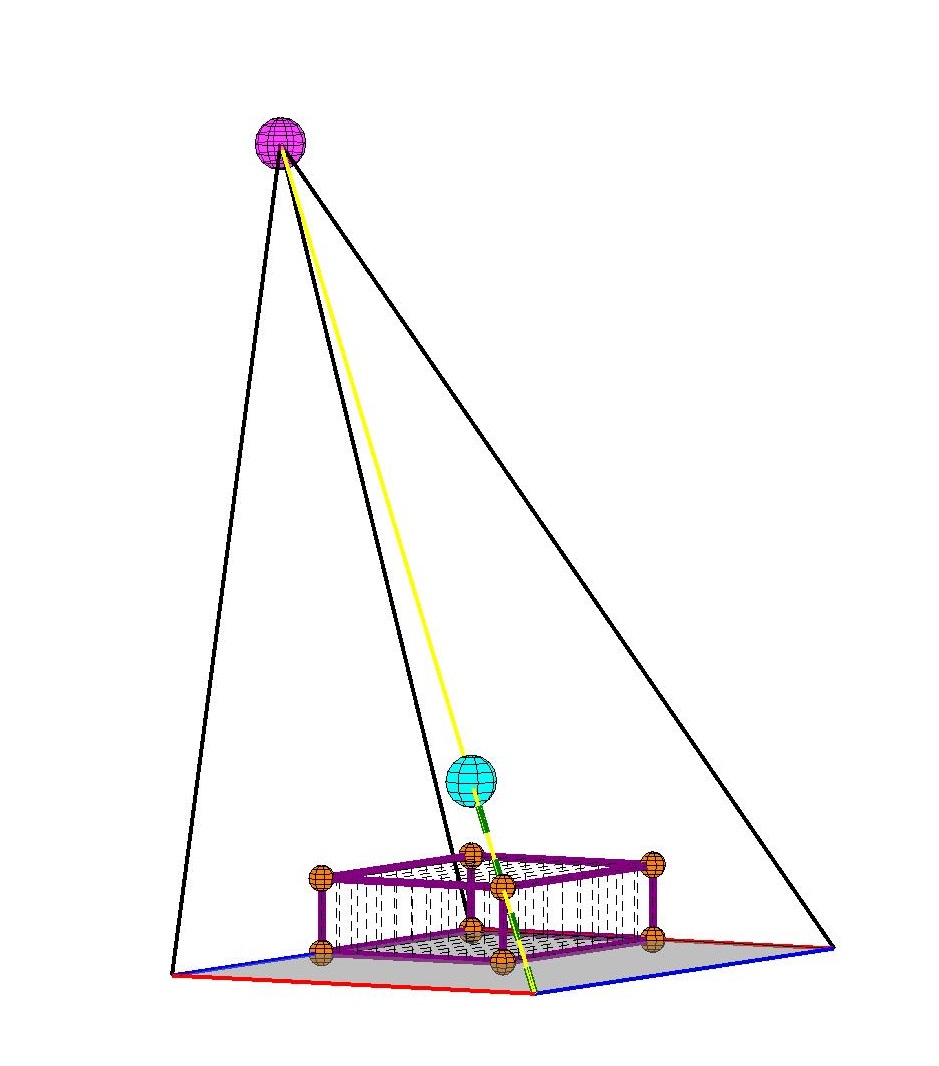}
        \end{subfigure}
        \caption{Induction step from $d=2$ to $d=3$ of the second case of $p$.}
    \end{subfigure}
    \begin{subfigure}[t]{\textwidth}
        \centering
        \begin{subfigure}[b]{0.66\textwidth}
            \centering
            \includegraphics[width=\textwidth]{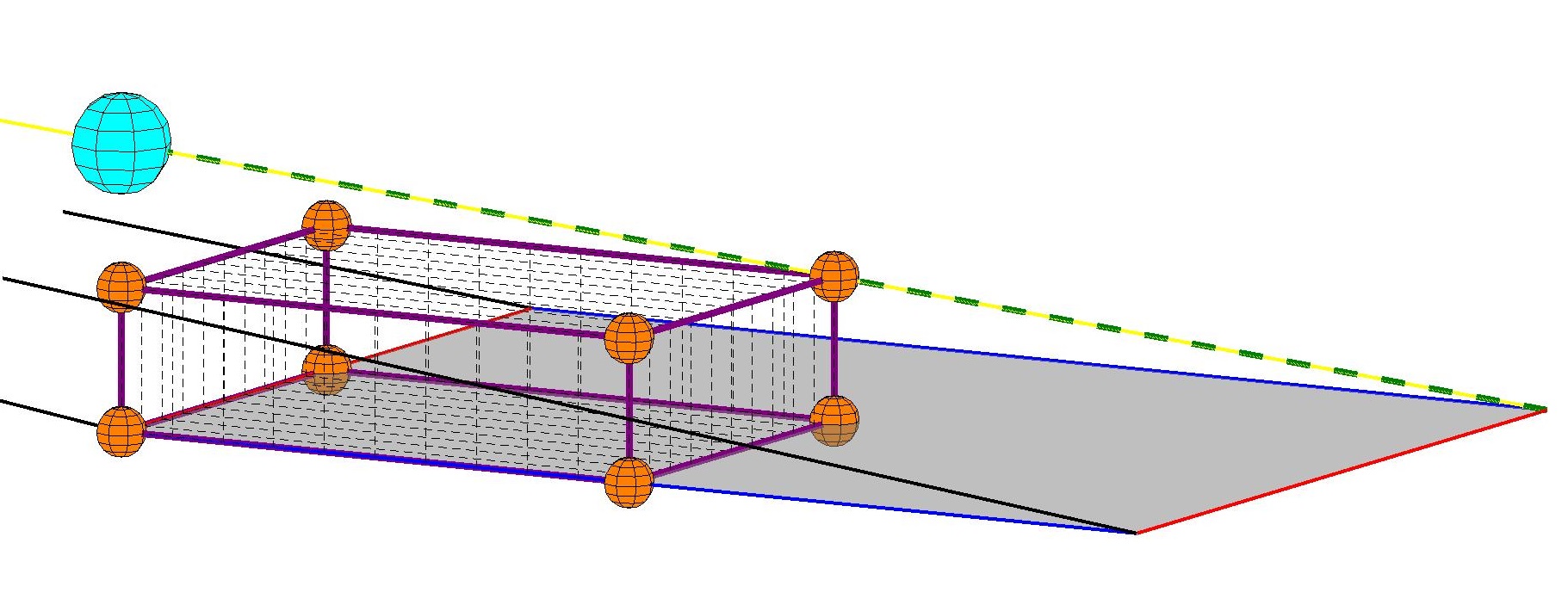}
        \end{subfigure}
        \begin{subfigure}[b]{0.33\textwidth}
            \centering
            \includegraphics[width=\textwidth]{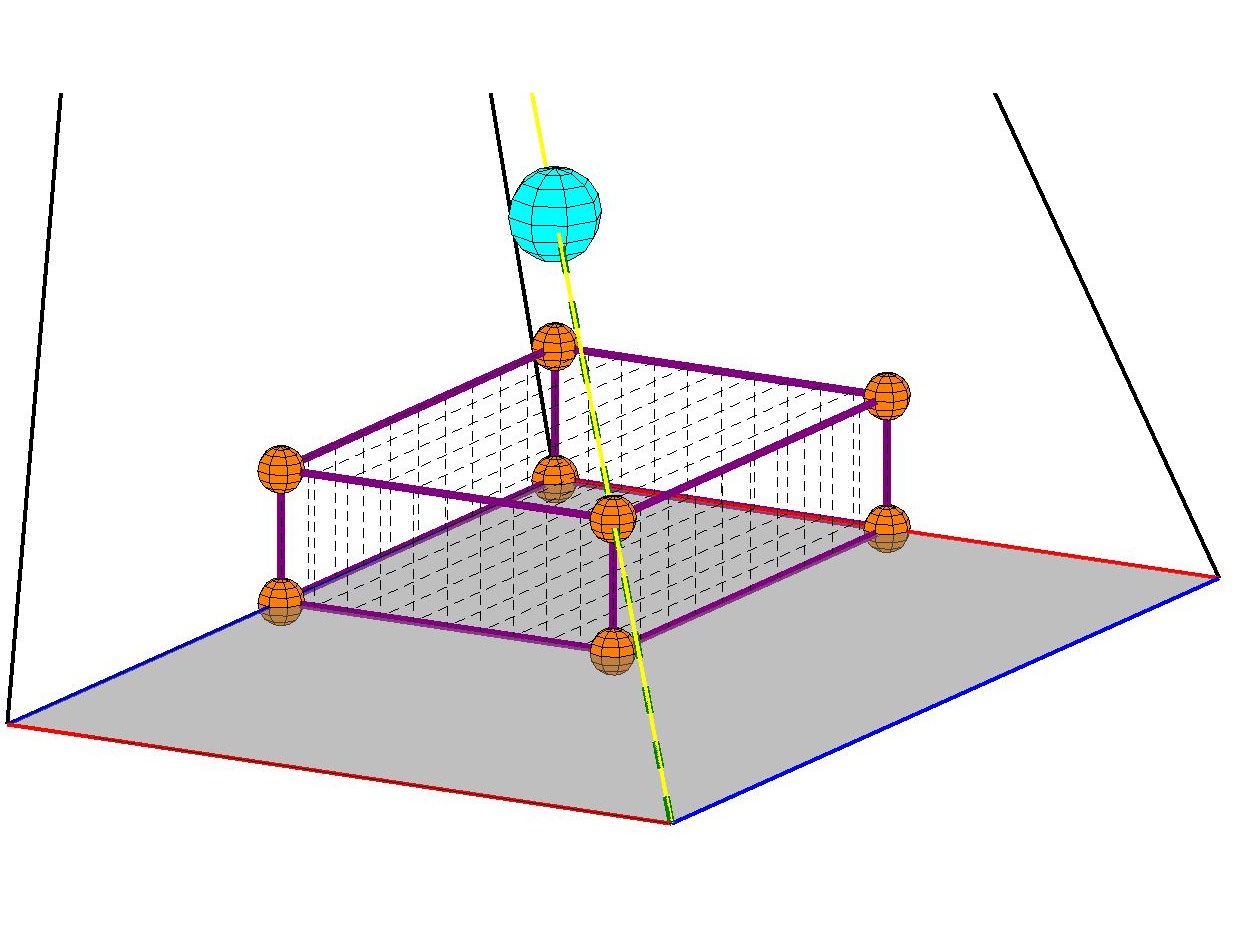}
        \end{subfigure}
        \caption{Induction step zoomed in from $d=2$ to $d=3$ of the second case for $p$.}
    \end{subfigure}
    \caption{Example for the described induction step of Lemma \ref{Lemma:one} from $d=2$ to $d+1=3$ of the second case for $p$. $B_2$ in gray, $p$ the pink point, in turquoise ${z=(2^{-2d+1}l_1,\dots,2^{-2d+1}l_d,2^{-2d+1}l_{d+1})}$, yellow the connection line of $p$ and $y$, in green the connection line of $z$ and $y$, the orange points as $\tilde{N}$ and in purple the boundary of $B$. The hyperpyramid is given as the convex hull of the turquoise point and $B_2$. The boundary of $B_2$ is indicated via the red and blue lines.}
    \label{fig:case2}
\end{figure}
We note that it is possible for one of the two hyperpyramids can be degenerate, that is, have $0$ volume. This however does not affect the proof, since we always work with the larger of the two when choosing the corner $p$.
\end{proof}

For $u>0$ we define $^{u}\mathbb{P}$ as the probability measure of the Poisson-Boolean base model where the underlying Poisson Point process for the locations of the vertices has intensity $u>0$. Let $C_{\infty}$ be an unbounded connected component of this model. Recall that by \cite{Chebunin2024}, if $C_{\infty}$ exists, it is almost surely unique.

We define the percolation probability
    \begin{equation*}
        \theta(u):= \,^{u}\mathbb{P}(0\in C_{\infty}).
    \end{equation*}

Following the discussion before Lemma \ref{Lemma:one}, we limit ourselves for the remainder of this section to the Boolean model where $C$ is given as a box with side-lengths $2^{-2(d-1)}D^{_{(1)}},\dots,2^{-2(d-1)}D^{_{(d)}}$. We can do this, since we will use the following two results only in the proof of the upper bound and since considering a ``smaller'' model can only increase the chemical distances. More precisely, 
the chemical distance for the model with boxes as above yields an upper bound for the general Poisson-Boolean base model with regularly varying diameters. In addition to that we assume also that the box $C$ has its rotation point and therefore also its location as its center, which we can do without loss of generally by using the mapping theorem, see e.g.\ in~\cite[Theorem~5.1]{Last2017}.
We denote in the following for $K\subset\mathbb{R}^d$ the interior of $K$ as $\interior(K)$ and the boundary of $K$ as $\partial K$. 
\begin{lemma}\label{Lemma:two}
For every $x\in\mathscr{P}$ we have \begin{equation*}
        \mathbb{P}_{0,x}(\{C_x\cap  \partial C_0\neq \emptyset\} \,\cap\, \{C_x\cap \interior(C_0)=\emptyset\})=0.
    \end{equation*}
\end{lemma}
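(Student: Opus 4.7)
The plan is to isolate the rotational degree of freedom in $\tilde{C}_x$ and to show that, by the rotation-invariance of $\mathbb{P}_C$, the tangency event is confined to a Haar-null subset of $SO(d)$. First, using rotation-invariance together with the convention that the box is centered at its location, I would disintegrate the mark distribution at $x$ as the joint distribution of its side-length vector together with an independent, Haar-uniform rotation $R\in SO(d)$ (uniform modulo the finite symmetry group of a box). Since under $\mathbb{P}_{0,x}$ the marks at $0$ and $x$ are independent, Fubini reduces the claim to showing that, for any fixed box $C_0$, any fixed $x\in\R^d$, and any fixed axis-aligned template box $B$,
\[
\mathcal{R} := \bigl\{R\in SO(d)\,:\,(x + RB)\cap C_0 \neq \emptyset\text{ and }(x + RB)\cap \interior(C_0)=\emptyset \bigr\}
\]
has Haar measure zero.

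Next I would localise $\mathcal{R}$ to the boundary of an open set. By Hausdorff-continuity of $R\mapsto x+RB$, the sets $V:=\{R:(x+RB)\cap\interior(C_0)\neq\emptyset\}$ and $U:=\{R:(x+RB)\cap C_0=\emptyset\}$ are open in $SO(d)$, and $\mathcal{R}=SO(d)\setminus(U\cup V)$ is contained in $\partial V$. The inclusion $\mathcal{R}\subseteq\partial V$ follows from a short first-order argument: at a touch point $q=x+R_0 b^*\in\partial C_0$ with $b^*\neq 0$ one can choose $X\in\mathfrak{so}(d)$ so that $R_0 X b^*$ points into $\interior(C_0)$, producing rotations $R_0\exp(tX)$ arbitrarily close to $R_0$ that lie in $V$; the degenerate case $b^*=0$ forces $x\in\partial C_0$, but then $0\in\interior(B)$ implies that a ball around $x$ intersects $\interior(C_0)$ for every $R$, so $\mathcal{R}=\emptyset$ trivially.

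Finally, I would exploit the polytope structure of $C_0$ and $B$. Since both are convex polytopes, the condition $(x+RB)\cap\interior(C_0)\neq\emptyset$ can be expressed as a finite disjunction of strict polynomial inequalities in the entries of $R$, making $V$ a semi-algebraic open subset of the smooth $\tfrac{d(d-1)}{2}$-dimensional manifold $SO(d)$. Its boundary $\partial V$ is then contained in a finite union of real algebraic hypersurfaces, each of dimension at most $\tfrac{d(d-1)}{2}-1$ and therefore of Haar measure zero (after verifying that the defining polynomials are not identically zero on $SO(d)$, which is immediate whenever $V$ is neither empty nor all of $SO(d)$, the only cases for which the lemma is non-trivial). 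Integrating the Haar-null conclusion over the distribution of the side-lengths of $\tilde{C}_x$ and over $\tilde{C}_0$ completes the proof.

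The main obstacle is the last dimension count. Although it is intuitively clear that external tangency of two rotated boxes is a codimension-$1$ coincidence among the rotational degrees of freedom, making this rigorous requires either invoking standard semi-algebraic machinery or enumerating the finitely many combinatorial contact types (vertex-facet, edge-edge, facet-facet, and so on) and checking that each contributes a codimension-$\geq 1$ condition on $R$.
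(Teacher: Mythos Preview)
Your approach is genuinely different from the paper's. The paper also conditions on $C_0$ and on the side-lengths of $C_x$, leaving only the rotation random, but it then runs a short monotone-limit contradiction: it introduces the $\delta$-reductions $C_0^{1/m}$, uses $\interior(C_0)=\bigcup_m C_0^{1/m}=\bigcup_m\interior(C_0^{1/m})$ and $\partial C_0=\lim_m\partial C_0^{1/m}$, and argues that a positive tangency probability for $C_0$ would force the identity $\mathbb P(C_x\cap\interior(C_0)\neq\emptyset)>\mathbb P(C_x\cap\interior(C_0)\neq\emptyset)$. No semi-algebraic machinery, no first-order perturbation of the rotation; the argument lives entirely at the level of nested boxes and monotone convergence.

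Your route through Haar measure and semi-algebraic sets is reasonable, but Step~3 is wrong as stated: the inclusion $\mathcal R\subseteq\partial V$ fails. Take $d=2$, $C_0=[-10,10]^2$, $x=(11,0)$, and $B$ a centred box with half-diagonal exactly $1$. For the rotation $R_0$ that sends a corner $b^*$ of $B$ to $(-1,0)$, the body $x+R_0B$ touches $C_0$ at the single point $q=(10,0)$, and $q-x$ is parallel to the outward normal of $C_0$ there. Your first-order move $t\mapsto x+R_0e^{tX}b^*$ produces velocities in $(q-x)^\perp$, i.e.\ purely tangential to the facet, so none of them points into $\interior(C_0)$; and in fact the second-order term pushes the corner \emph{outward}. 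A direct check shows that for every small rotation the box separates from $C_0$: the leftmost point has first coordinate $11-\cos\theta>10$. Thus $R_0\in\mathcal R\cap\partial U$ but $R_0\notin\overline V$, so $\mathcal R\not\subseteq\partial V$.

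What you actually need is the weaker statement that $\mathcal R$ has empty interior (equivalently $\mathcal R\subseteq\partial U\cup\partial V$); combined with semi-algebraicity this gives Haar-measure zero. That is true, but your perturbation argument does not prove it, and the repair is not just choosing $b^*$ more cleverly---in the example above the touch point is unique. One clean way out is to work with the Minkowski difference $K_R:=(x+RB)-C_0$ and note that $\mathcal R=\{R:0\in\partial K_R\}$; the map $R\mapsto h_{K_R}(u)=\langle x,u\rangle+h_B(R^Tu)+h_{C_0}(-u)$ is piecewise-linear in the entries of $R$, and one can then show that $0\in\partial K_R$ forces a nontrivial algebraic relation on $R$ that cannot hold on an open set. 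Alternatively, the case analysis you mention (vertex--facet contacts pin one polynomial equation, edge--edge contacts another, etc.) does go through, but your current Step~3 does not.
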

\begin{proof}
Let $\mathbb{P}_{C_0,D_x}$ be the probability measure under the condition that $C_0$ and the size of the diameters of $C_x$ are fixed i.e.\ the only random part is the rotation of $C_x$.
We will argue the claim by using contradiction. 
Let $\rot_{C_0}$ be the rotation for which
\begin{equation*}
    C_0= \rot_{C_0}\bigl(\bigtimes_{i=1}^d [-2^{-2d+1}D^{_{(i)}}, 2^{-2d+1}D^{_{(i)}}]\bigr).
\end{equation*}
Define the $\delta$-reduction of $C_0$ as
\begin{equation*}
    C_0^{\delta}:=\rot_{C_0}\bigl(\bigtimes_{i=1}^d [-2^{-2d+1}D^{_{(i)}}+\delta, 2^{-2d+1}D^{_{(i)}}-\delta]\bigr),
\end{equation*}
and set $ C_0^{\delta}=\emptyset$ if $\delta>\epsilon2^{-2d+1}$. 
It holds that
\begin{equation*}
    \interior(C_0) = \bigcup\limits_{m\in\mathbb{N}} C_0^{1/m}.
\end{equation*}
Moreover we also have
\begin{equation*}
     \interior(C_0) = \bigcup\limits_{m\in\mathbb{N}} \interior(C_0^{1/m}).
\end{equation*}
Furthermore, it holds that
\begin{equation*}
     \partial C_0 = \lim_{m\rightarrow\infty} \partial C_0^{1/m}.
\end{equation*}
Assume now, incorrectly, that 
\begin{equation}\label{eq:contradiction}
        \mathbb{P}_{C_0,D_x}(\{C_x\cap  \partial C_0\neq \emptyset\} \,\cap\, \{C_x\cap \interior(C_0)=\emptyset\})>0.
\end{equation}

Using the preceding observations, we can obtain 
\begin{align*}
    \mathbb{P}_{C_0,D_x}(C_x \cap \interior(C_0)\neq \emptyset)& = \lim\limits_{m\rightarrow\infty} \mathbb{P}_{C_0,D_x} (C_x \cap C_0^{1/m}\neq \emptyset)\\
    &=\lim\limits_{m\rightarrow\infty} \mathbb{P}_{C_0,D_x} (C_x \cap \interior(C_0^{1/m})\neq \emptyset) \\
    &\hspace{20pt}+ \lim\limits_{m\rightarrow\infty} \mathbb{P}_{C_0,D_x} (\{C_x \cap \partial C_0^{1/m}\neq \emptyset\} \cap\{C_x \cap \interior(C_0^{1/m})= \emptyset\} )\\
    &>\mathbb{P}_{C_0,D_x}(C_x \cap \interior(C_0)\neq \emptyset),
\end{align*}
where the inequality follows from \eqref{eq:contradiction} and the third observation above. Since this yields
\begin{equation*}
\mathbb{P}_{C_0,D_x}(C_x \cap \interior(C_0)\neq \emptyset)> \mathbb{P}_{C_0,D_x}(C_x \cap \interior(C_0)\neq \emptyset),
\end{equation*}
which is a contradiction, we must have that
\begin{equation*}
    \mathbb{P}_{C_0,D_x}(\{C_x\cap  \partial C_0\neq \emptyset\} \,\cap\, \{C_x\cap \interior(C_0)=\emptyset\})=0.
\end{equation*}
Integrating this over all possible $C_0$ and $D_x$ leads then to the stated claim.
\end{proof}

\begin{proposition}\label{Prop:one}
    The function $u\mapsto \theta(u)$ is continuous.   
\end{proposition}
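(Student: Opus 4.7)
The plan is to use the standard monotone Poisson coupling to reduce the continuity of $\theta$ to the absence of atoms in an auxiliary random variable. First I would enlarge the marked point process of Definition~\ref{Def:model} by attaching an independent arrival time $t_x\in[0,\infty)$ to every potential point, formally working with a Poisson process $\mathscr{P}^*$ on $\mathcal{S}\times[0,\infty)$ whose intensity is the product of $\lambda\otimes\mathbb{P}_C$ with Lebesgue measure on $[0,\infty)$. Setting $\mathscr{P}(u):=\{x\in\mathscr{P}^*:t_x\leq u\}$ yields a process with the law of $\mathcal{X}$ at intensity $u$, and $\mathscr{P}(u)\subseteq\mathscr{P}(v)$ whenever $u\leq v$. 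By the uniqueness of the unbounded component recalled from \cite{Chebunin2024}, this monotonicity lifts to $C_\infty(u)\subseteq C_\infty(v)$, so $u\mapsto\mathbbm{1}\{0\in C_\infty(u)\}$ is almost surely non-decreasing and $\theta$ is non-decreasing. Introducing the threshold $U_*:=\inf\{u>0:0\in C_\infty(u)\}\in[0,\infty]$, a short case analysis on whether this infimum is attained shows that the right-jump of $\theta$ at $u_0$ equals $\mathbb{P}(U_*=u_0,\,0\notin C_\infty(u_0))$ and the left-jump equals $\mathbb{P}(U_*=u_0,\,0\in C_\infty(u_0))$, so continuity of $\theta$ at $u_0$ is equivalent to $\mathbb{P}(U_*=u_0)=0$.

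For the no-atom property the key observation is that $\mathcal{S}\times\{u_0\}$ is a null set for the intensity of $\mathscr{P}^*$, so almost surely no point of $\mathscr{P}^*$ has arrival time exactly $u_0$. I would then use the representation $\{0\in C_\infty(u)\}=\bigcap_{R>0}\{0\leftrightarrow\partial B(0,R)\}$ and consider the $R$-truncated threshold $T_R:=\inf\{u:0\leftrightarrow\partial B(0,R)\}$. Lemma~\ref{Lemma:two} enters here: by ruling out configurations in which a minimum-mark chain from $0$ to $\partial B(0,R)$ depends on a pair of bodies intersecting only on their boundaries, it ensures that $T_R$ is almost surely the arrival time $t_{x_R^*}$ of a canonically chosen pivotal body $x_R^*\in\mathscr{P}^*$. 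Hence $\mathbb{P}(T_R=u_0)=0$ for each $R>0$.

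The main obstacle is passing from the no-atom property of each $T_R$ to that of $U_*=\sup_R T_R$, since a monotone limit of atomless random variables may in principle acquire atoms. I would resolve this by arguing that the pivotal points $x_R^*$ can be chosen to stabilize along a subsequence of radii, so that $U_*=t_{x_\infty^*}$ is itself the arrival time of a single Poisson point. The stabilization again uses Lemma~\ref{Lemma:two}: any chain realizing $T_R$ that already connects $0$ to $\partial B(0,R')$ for some $R'>R$ does so with the same maximum mark, and a standard diagonal argument exploiting the local finiteness of $\mathscr{P}^*$ produces such a chain along a subsequence. Once this is established one obtains $\mathbb{P}(U_*=u_0)\leq\mathbb{P}(\exists x\in\mathscr{P}^*:t_x=u_0)=0$, and the continuity of $\theta$ on $(0,\infty)$ follows.
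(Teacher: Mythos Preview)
Your coupling with arrival times and the reduction of continuity to the atomlessness of $U_*$ is correct and matches the standard route the paper is citing (Meester--Roy, Sarkar). Your use of Lemma~\ref{Lemma:two} to argue that each finite-range threshold $T_R$ is almost surely the arrival time of a single Poisson point is also fine, and this part disposes of the right-jump $\mathbb{P}(U_*=u_0,\,0\notin C_\infty(u_0))$.

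The real gap is in the stabilization step for the left-jump. On the event $\{U_*=u_0,\,0\in C_\infty(u_0)\}$ you have, almost surely, that no Poisson point carries the mark $u_0$; hence $T_R<u_0$ for every $R$, while $\sup_R T_R=u_0$. The pivotal points $x_R^*$ therefore have pairwise distinct arrival times $T_R$ strictly increasing to $u_0$, so they \emph{cannot} stabilize along any subsequence, and no diagonal or local-finiteness argument can produce a single point $x_\infty^*$ with $t_{x_\infty^*}=U_*$. A concrete obstruction: an infinite chain from $0$ whose successive grains have arrival times $u_0-1/n$ gives $U_*=u_0$ without any individual point being responsible.

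What the paper (through the cited references) actually does for the left-jump uses uniqueness of the unbounded component more substantially than merely for the monotonicity $C_\infty(u)\subseteq C_\infty(v)$. Since the model is robust, for any fixed $u_1\in(0,u_0)$ there is almost surely a unique unbounded cluster $C_\infty(u_1)$, and by uniqueness at level $u_0$ one has $C_\infty(u_1)\subseteq C_\infty(u_0)$. On $\{0\in C_\infty(u_0)\}$ there is then a \emph{finite} chain of grains in $\mathscr{C}(u_0)$ joining $0$ to a point of $C_\infty(u_1)$; its finitely many arrival times are all strictly below $u_0$ almost surely, so setting $\beta$ to be the maximum of $u_1$ and these arrival times gives $0\in C_\infty(\beta)$ with $\beta<u_0$, whence $U_*<u_0$. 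This is the step your stabilization argument was meant to replace, and it is where Lemma~\ref{Lemma:two} is not the relevant input --- uniqueness is.
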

\begin{proof}
    The proof can be adapted to the proof with balls as convex grains as in \cite[Theorem 3.9]{Sarkar1997} or \cite{Meester1996a}. The only thing we have to check is $\mathbb{P}_{0,x}(\{C_x\cap \partial C_0\neq\emptyset\}\,\cap\,\{C_x\cap \interior(C_0)=\emptyset\})=0$ for all $x\in\mathscr{P}$, which we proved in Lemma \ref{Lemma:two}. 
\end{proof}
\section{Proof of the lower bound}\label{Sect:lower}
In this section we will prove the lower bound for the chemical distance. For this and also for the upper bound we will use a construction of infinite paths similar to the one introduced in \cite{Gracar2024a}. 

We first consider the case $M\neq \emptyset$ with $\alpha_k>k$ for all $k\in\{1,\dots,d\}$ and start with an increasing threshold sequence, which we denote by $(f_n)_{n\in\mathbb{N}}$ and define as
\begin{equation*}
      f_n:=\bigl(f_{n-1}\bigr)^{\frac{\min\{d-\kappa,\kappa\}}{\alpha_\kappa-\kappa}-\epsilon},\, \mbox{ for } n\in\mathbb{N}.
\end{equation*} 
Here, $f_0>1$ can be chosen arbitrarily big and $\kappa$ is defined as in Theorem \ref{Theorem}. Note that the $\epsilon$ in the definition of $f_n$ is the same as previously defined in Section \ref{Sect:Introduction} and without loss of generality we assume further that $0<\epsilon<\tfrac{1}{4}$; this ensures that the exponent in the above sequence is strictly bigger than~1 and makes the sequence increasing. As $\kappa\in M$ this is possible since $\alpha_{\kappa}< \min\{2\kappa,d\}$. 

We use the above threshold sequence to define two very similar sequences of events that our infinite path will have to satisfy. We will use the first of these event sequences in this proof and use the other for the proof of the upper bound. Due to the similarity of the relevant events however, we introduce both of them here to keep unnecessary repetition to a minimum. To define the two sequences we, as in \cite{Gracar2024a}, define the following two sets.
For a vertex $\mathbf{x}\in\mathcal{X}$ we define     
        \begin{equation}\label{eq:Oi}
            O_{_{i}}(\mathbf{x}):= \Bigl\{y\in\mathbb{R}^d: |x-y| \in [\tfrac12 f_{_{i}},f_{_{i}}], \measuredangle\big(x-y, \pm p_{x}^{_{(j)}}\big)> \varphi \mbox{ for all }  1\leq j\leq \kappa \Bigr\},
        \end{equation}
with $\varphi=2^{-\kappa}\pi$ and $\measuredangle(x,y)$ as the angle between vectors $x,y\in\mathbb{R}^d$. $\pm$ means that the condition on the angles is supposed to hold for the angle between $x-y$ and $+p_{x}^{_{(j)}}$ as well as $-p_{x}^{_{(j)}}$. 
Defining the ball $B_r(z):=\{w\in\mathbb{R}^d:\,|w-z|\leq r\}$ for $r>0$, $z\in\mathbb{R}^d$, we define for $\mathbf{x}\in\mathcal{X}$ and $y\in\mathbb{R}^d$
        \begin{equation} \label{eq:B_stern}
            B^{*}_{f_{_{i-1}}}(\mathbf{x},y) = P_{H_{x-y}}\Big(C_{x}\cap B_{f_{_{i-1}}}(x)\Big),
        \end{equation}
where $H_{x-y}$ is the hyperplane perpendicular to $x-y$ with \smash{$|H_{x-y} \cap \partial B_{f_{_{i-1}}}(x)| =1$} and \smash{$\dist(H_{x-y}, x)<\dist(H_{x-y},y)$}, i.e. as the hyperplane that touches the ball of radius~$f_{_{i-1}}$ with $x$ as the centre, and $P_A$ is the orthogonal projection onto a hyperplane $A\subset \mathbb{R}^d$.

Using this we define for $x\in\mathscr{P}$ the two afore mentioned sequences of events $(A_n^x)_{n\in\mathbb{N}}$ and $(\bar A_n^x)_{n\in\mathbb{N}}$ as 
\begin{align*}
            A_n^{x}  := \Bigg\{ \begin{split}\exists &\mathbf{x}_1,\ldots, \mathbf{x}_n\in\mathcal{X}\colon \mathbf{x}_i\neq\mathbf{x}_m \text{ for all } i\neq m, D_{x_i}^{_{(\kappa)}}\geq 2^{2(d-1){+\epsilon}}f_{_{i}}, 
            D_{x_i}^{_{(1)}}\leq 2^{2(d-1)+\epsilon}f_{_{i}}^{\frac{2\alpha_{\kappa}}{\alpha_1}},\\& x_i\in O_i(\mathbf{x}_{i-1})\text{ and }C_{x_i}\cap B^{*}_{f_{_{i-1}}}(\mathbf{x}_{i-1},x_i)\neq \emptyset \text{ for all } 1\leq i\leq n, \text{ with } x_0=x. \end{split}\Bigg\}
\end{align*}
and 
\begin{align*}
            \bar A_n^{x}  := \Bigg\{ \begin{split}\exists &\mathbf{x}_1,\ldots, \mathbf{x}_n\in\mathcal{X}\colon \mathbf{x}_i\neq\mathbf{x}_m \text{ for all } i\neq m, D_{x_i}^{_{(\kappa)}}\geq 2^{2(d-1){+\epsilon}}f_{_{i}}, 
            \\& x_i\in O_i(\mathbf{x}_{i-1})\text{ and }C_{x_i}\cap B^{*}_{f_{_{i-1}}}(\mathbf{x}_{i-1},x_i)\neq \emptyset \text{ for all } 1\leq i\leq n, \text{ with } x_0=x. \end{split}\Bigg\}.
\end{align*}
$(A_n^x)_{n\in\mathbb{N}}$ will be used to construct the paths in the current proof for the lower bound of the chemical distance, while $(\bar A_n^x)_{n\in\mathbb{N}}$ will play a similar role for the upper bound. We note that latter sequence matches the definition $(A_n)_{n\in\mathbb{N}}$ from \cite{Gracar2024a}\footnote{A reader familiar with \cite{Gracar2024a} might notice the additional factor $2^{\epsilon}$ that is not present in the other work. This factor should be present there as well, but has no effect on the computations and conclusions presented there.}; unlike in that work, we highlight here the dependence of the events on the starting vertex $\mathbf{x}$ in the superscript. 

\begin{rem}\label{rem:A_bar_kappa}
    Note that the definitions of $A_n^{x}$ and $\bar A_n^{x}$ both depend on $\kappa$. Later on, in Section \ref{Sect:upper}, when we will be working with $\bar A_n^{x}$, we will replace $\kappa$ with some fixed $k\in M$. This change does not affect any of the calculations that relate to $\bar A_n^{x}$ we do in this section. However, in order to avoid writing all calculations twice, or repeatedly pointing out that $\kappa$ should be replaced by $k$ when working with $\bar A_n^{x}$, we allow ourselves this small abuse of notation and simply write $\kappa$ overall. We will remind the reader when $\kappa$ is to be replaced with $k$ at the appropriate step in Section \ref{Sect:upper}.
\end{rem}

Roughly speaking, $A_n^x$ and $\bar A_n^x$ are the events that there exists a path of length $n$ which starts in $\mathbf{x}\in\mathcal{X}$ and satisfy for every $i\in\{1,\dots,n\}$ the following.

        \begin{itemize}
            \item The first $\kappa$ diameters of $\mathbf{x}_i$ do not fall below the threshold $2^{2(d-1){+\epsilon}}f_{_{i}}$. For $A_n^x$ we have additionally that the first diameter of $\mathbf{x}_i$ is bounded from above by $2^{2(d-1)+\epsilon}f_n^{_{\frac{2\alpha_{\kappa}}{\alpha_1}}}$. The latter property is necessary for the proof in the lower bound to get an upper bound for the distance that can be covered by the first $n$ vertices of a path, and is also the only difference from the other sequence.
            \item $x_i\in O_i(\mathbf{x}_{i-1})$ ensures that $x_i$ does not lie ``too close'' to the affine subspace through $x_{i-1}$ spanned by the orientations 
            of the first $\kappa$ big diameters of $\mathbf{x}_{i-1}$.  More precisely, we require the angle between any spanning vector of this subspace through $x_{i-1}$ and the vector $x_{i}-x_{i-1}$ 
            to be larger than some $\varphi\in\mathbb{S}^{d-1}$. 
            \item $x_i\in O_i(\mathbf{x}_{i-1})$ further ensures that the distance between points $x_{i-1}$ and $x_i$ is at least  $\frac12 f_{_{i}}$ and at most $f_{_{i}}$, and $x_i\in B_{2f_{_{i}}}(0)$. 
            \item $C_{x_i}\cap B^{*}_{f_{_{i-1}}}(\mathbf{x}_{i-1},x_i)\neq \emptyset$ ensures that the convex body $C_{x_i}$ intersects a special\footnote{This part of the hyperplane is defined in such a way that if $C_{x_i}$ intersects it, it must also intersect~$C_{x_{i-1}}$.} part of a hyperplane ``behind'' the convex body~$C_{x_{i-1}}$. 
        \end{itemize}

Using $(A_n^x)_{n\in\mathbb{N}}$ combined with the \emph{truncated first moment method} of \cite{Gracar2022a} will give us the lower bound for the chemical distance. The idea of the truncated first moment method is to choose $\Delta\in\mathbb{N}$ big enough such that
$\mathbb{P}_{x,y}(\dist(\mathbf{x},\mathbf{y}) \leq 2\Delta)$ is arbitrarily small. Defining $G_n^{(x,y)}$ as the event that a \emph{good} (in some yet to be determined sense) path of length $n$ connecting $\mathbf{x},\mathbf{y}$ exists, and $B_n^{(x)}$ as the event that a \emph{bad} (i.e.\ not good) path starting in $\mathbf{x}$ of length $n$ exists, we have by counting the numbers of bad and good paths of length $\Delta$, respectively $2\Delta$, the following inequality:
\begin{equation*}
    \mathbb{P}_{x,y}(\dist(\mathbf{x},\mathbf{y})\leq 2\Delta) \leq \sum\limits_{n=1}^{\Delta} \mathbb{P}_{x}(B_n^{(x)}) +\sum\limits_{n=1}^{\Delta}\mathbb{P}_{y}(B_n^{(y)}) +\sum\limits_{n=1}^{2\Delta} \mathbb{P}_{x,y}(G_n^{(x,y)}).
\end{equation*}
We now formalise what a good path is. We say that a path of length 
$n$ connecting $\mathbf{x}$ and $\mathbf{y}$ via vertices $\mathbf{x}_1,\dots,\mathbf{x}_{n-1}\in\mathcal{X}$ such that with $\mathbf{x}_0=\mathbf{x}$ and $\mathbf{x}_n=\mathbf{y}$ is good, 
if the vertices $\mathbf{x}_0,\dots,\mathbf{x}_{\lceil n/2\rceil}$ imply that $A_{\lceil n/2\rceil}^x$ holds and $\mathbf{x}_n,\dots,\mathbf{x}_{\lceil n/2\rceil}$ imply  that $A_{\lceil n/2\rceil}^y$ holds.
For $n\in\mathbb{N}$ and $\mathbf{x}\in\mathcal{X}$ we say that a path consisting of vertices $\mathbf{x}_0,\dots,\mathbf{x}_{n}$ is bad, if the vertices $\mathbf{x}_0,\dots,\mathbf{x}_{n-1}$ imply that $A_{n-1}^x$ holds, but $\mathbf{x}_0,\dots,\mathbf{x}_{n}$ do not imply that $A_n^x$ holds.

Consider now a good path of length $n$ and note that the property of being good imposes a maximum euclidean distance such a path can reach starting from a location $x\in\mathscr{P}$. Setting $\Delta$ small enough so that the sum over the maximum lengths of the first diameters of the $n$ convex bodies in this path is smaller than $|x-y|$, we get that $\sum_{n=1}^{\Delta} \mathbb{P}_{x,y}(G_n^{(x,y)})=0.$ By definition, a good path of length $n$ satisfies $D^{_{(1)}}_{x_i} < 2^{2(d-1)+\epsilon}f_{_{i}} ^{_{\frac{2\alpha_{\kappa}}{\alpha_1}}}$ for $i\in\{0,\dots,\lceil n/2\rceil\}$ and $D^{_{(1)}}_{x_{n-i+1}} < 2^{2(d-1)+\epsilon}f_{_{i}} ^{_{\frac{2\alpha_{\kappa}}{\alpha_1}}}$ for $i\in\{1,\dots,\lceil n/2\rceil\}.$ We therefore have to find the maximal even $m \in\mathbb{N}$ such that
\begin{equation*}
    2\sum\limits_{n=1}^{m/2} 2^{2(d-1)+\epsilon}f_n^{\frac{2\alpha_{\kappa}}{\alpha_1}} < |x-y|,
\end{equation*}
which will give us the largest possible value for $\Delta$.
Using the definition of the threshold sequence leads to the following inequality
\begin{equation}\label{eq:TMM_Delta_1}
    2\sum\limits_{n=1}^{m/2} 2^{2(d-1)+\epsilon}f_0^{\frac{2\alpha_{\kappa}}{\alpha_1}\left(\frac{\min\{d-\kappa,\kappa\}}{\alpha_\kappa-\kappa}-\epsilon\right)^n} < |x-y|.
\end{equation}
Choosing $f_0$ big enough it follows that 
\begin{equation*}
    2\sum\limits_{n=1}^{m/2} 2^{2(d-1)+\epsilon}f_0^{\frac{2\alpha_{\kappa}}{\alpha_1}\left(\frac{\min\{d-\kappa,\kappa\}}{\alpha_\kappa-\kappa}-\epsilon\right)^n} \leq 3\cdot 2^{2(d-1)+\epsilon}f_0^{\frac{2\alpha_{\kappa}}{\alpha_1}\left(\frac{\min\{d-\kappa,\kappa\}}{\alpha_\kappa-\kappa}-\epsilon\right)^{m/2}}.
\end{equation*}
With that we have that \eqref{eq:TMM_Delta_1} is true if 
\begin{equation*}
    3\cdot 2^{2(d-1)+\epsilon}f_0^{\frac{2\alpha_{\kappa}}{\alpha_1}\left(\frac{\min\{d-\kappa,\kappa\}}{\alpha_\kappa-\kappa}-\epsilon\right)^{m/2}} < |x-y|.
\end{equation*}
Applying a double logarithm on both sides of the inequality we see that this is equivalent to 
\begin{equation*}
    \frac{m}{2}\log\bigl(\frac{\min\{d-\kappa,\kappa\}}{\alpha_\kappa-\kappa}-\epsilon\bigr) + \log\Bigl(\frac{2\alpha_{\kappa}}{\alpha_1}\log(f_0)+\log(3)+\log\bigl(2^{2(d-1)+\epsilon}\bigr)\Bigr) < \log\log |x-y|.
\end{equation*}
This leads to the following upper bound for $m$:
\begin{equation}\label{eq:TMM_Delta_2}
    m<  2 \frac{\log\log |x-y|- \log\Bigl(\frac{2\alpha_{\kappa}}{\alpha_1}\log(f_0)+\log(3)+\log\bigl(2^{2(d-1)+\epsilon}\bigr)\Bigr)} {\log\bigl(\frac{\min\{d-\kappa,\kappa\}}{\alpha_\kappa-\kappa}-\epsilon\bigr)}.
\end{equation}
Let $\delta>0$. If we choose $m$ small enough to satisfy 
\begin{equation*}
    m \leq\frac{(2-\delta)\log\log|x-y|}{\log\bigl(\frac{\min\{d-\kappa,\kappa\}}{\alpha_\kappa-\kappa}-\epsilon\bigr)},
\end{equation*}
we get that as $|x-y|\rightarrow \infty$, such an $m$ also satisfies inequality \eqref{eq:TMM_Delta_2} and therefore also \eqref{eq:TMM_Delta_1}.
Choosing now 
\begin{equation*}
    2\Delta = \frac{(2-\delta)\log\log|x-y|}{\log\bigl(\frac{\min\{d-\kappa,\kappa\}}{\alpha_\kappa-\kappa}-\epsilon\bigr)}
\end{equation*}
therefore yields $\sum_{n=1}^{2\Delta} \mathbb{P}_{x,y}(G_n^{(x,y)})=0$ for $|x-y|$ large enough, as desired.

Looking now at the bad paths we will show that
\begin{equation*}
    \sum\limits_{n=1}^{\infty}\mathbb{P}_{x}(B_n^{(x)}) < \delta(f_0),
\end{equation*}
where $\delta$ is a function satisfying $\lim_{t\to\infty}\delta(t)=0$. 
We have by definition that  
\begin{equation*}
    \mathbb{P}_x(B_n^{(x)}) = \mathbb{P}_x\bigl((A_n^x)^c \cap A_{n-1}^x\bigr) \leq  \mathbb{P}_x\bigl((A_n^x)^c\,|\,A_{n-1}^x\bigr).
\end{equation*}
Finding a suitable summable upper bound for $\mathbb{P}_x((A_n^x)^c\,|\,A_{n-1}^x)$ for all $n\in\mathbb{N}$ will complete the proof. For this we adapt a calculation from \cite[Section 2.2]{Gracar2024a} and highlight here only the relevant changes. Note that these changes are not necessary to prove the same statements for the events $\bar A_n^{x}$ which we will use in the proof of the upper bound. To continue, we need the following definitions.         
Let $\rot_{\vartheta}$ be a rotation such that $\rot_{\vartheta}(e_1)=\vartheta$. Note that this rotation is not unique and we can fix one arbitrarily. In addition to that we define for $x,y\in\mathbb{R}^d$ $\rho_{x,y}:=\tfrac{x-y}{|x-y|}\in \mathbb{S}^{d-1}$ as the orientation of the vector $x-y$.  With that we define
\begin{align*}
    Q_{f_n}(x,y) := \rot_{\rho_{x,y}}\Bigl(\operatorname{conv}\bigl(&\bigl\{-\tfrac{\epsilon}{2}e_i,\tfrac{\epsilon}{2}e_i\,: \, 1\leq i \leq d-\kappa\bigr\}\\ &\cup \bigl\{-\tfrac{f_n}{2}e_m,\tfrac{f_n}{2}e_m\, : \, d-\kappa+1\leq m\leq d\bigr\}\bigr)\Bigr) +x
\end{align*}
where $\operatorname{conv}(A)$ is the convex hull of the set $A$ and define
\begin{equation*}
    Q^*_{_{f_n}}(x,y):= P_{H_{x-y}}\bigl(Q_{f_n}(x,y)\bigr).
\end{equation*}
with $P_{H_{x-y}}$ as the same orthogonal projection as in \eqref{eq:B_stern}. Roughly speaking, $Q^*_{_{f_n}}$ is an approximation of the set $B^{*}_{f_{_{i-1}}}$. In addition to that recall the Definition \ref{Def:model} for $\mathbb{P}_C$ as the law of $C$. 

The key step is to get a suitable lower bound for 
\begin{equation*}
    \mathbb{P}_C\bigl(  (x+C)\cap   Q^*_{f_n}(x_n,x)\neq \emptyset\,|\, D_{C}^{_{(\kappa)}} \geq 2^{2(d-1)+\epsilon}f_{_{n+1}} \bigr)  \mathbb{P}_C(D_{C}^{_{(\kappa)}} \!\!\geq 2^{2(d-1)+\epsilon}f_{_{n+1}}).
\end{equation*}
Following the same arguments as in \cite[Section 2.2]{Gracar2024a}, this is equivalent to finding a lower bound for
\begin{equation}\label{eq:TMM_key_step}
    \mathbb{P}_C\Biggl(  \!\!(x+C)\cap   Q^*_{f_n}(x_n,x)\neq \emptyset \Bigg|\!\! 
    \begin{array}{c}
        D_{C}^{_{(\kappa)}} \geq 2^{2(d-1){+\epsilon}}f_{_{n+1}},  \\
        D_{C}^{_{(1)}} \leq 2^{2(d-1)+\epsilon}f_{_{n+1}}^{\frac{2\alpha_{\kappa}}{\alpha_1}}
    \end{array}\!
    \Biggr) \mathbb{P}_C\Biggl(\!\!\!
    \begin{array}{c}
        D_{C}^{_{(\kappa)}} \geq 2^{2(d-1){+\epsilon}}f_{_{n+1}},  \\
        D_{C}^{_{(1)}} \leq 2^{2(d-1)+\epsilon}f_{_{n+1}}^{\frac{2\alpha_{\kappa}}{\alpha_1}}
    \end{array}\!\!
    \Biggr).
\end{equation}
The conditional probability depends only on the volume of the set of suitable orientations for $C$ that result in an intersection of $(x+C)$ with $Q^*_{f_n}(x_n,x)$. As in \cite{Gracar2024a} we get 
\begin{equation*}
    \mathbb{P}_C\Biggl(  (x+C)\cap   Q^*_{f_n}(x_n,x)\neq \emptyset \,\Bigg|\, 
    \begin{array}{c}
        D_{C}^{_{(\kappa)}} \geq 2^{2(d-1){+\epsilon}}f_{_{n+1}},  \\
        D_{C}^{_{(1)}} \leq 2^{2(d-1)+\epsilon}f_{_{n+1}}^{\frac{2\alpha_{\kappa}}{\alpha_1}}
    \end{array}
    \Biggr)
    \geq c \frac{f_n^{\min\{d-\kappa,\kappa\}}}{f_{_{n+1}}^{d-\kappa}}.
\end{equation*}
For the second probability term in \eqref{eq:TMM_key_step} we have to use the Potter bounds for regularly varying random variables (see \cite[Prop.\ 1.4.1]{Kulik2020}). This gives us for $X$ a regularly varying random variable with index $-\alpha$, that for every $\varepsilon\in(0,\alpha)$ there exists $c(\varepsilon)>0$ such that for $x\geq\varepsilon$
\begin{equation*}
    c(\varepsilon)^{-1}\, x^{-\alpha -\varepsilon} \leq \mathbb{P}(X >x)\leq c(\varepsilon)\,x^{-\alpha +\varepsilon}.
\end{equation*}

We therefore have for sufficient small $\varepsilon$ that for $s>\varepsilon $ and $i\in\{1,\dots,d\}$
\begin{equation*}
    c(\varepsilon)^{-1}s^{-\alpha_{i}-\varepsilon} \leq \mathbb{P}_C(D_C^{(i)} \geq s) \leq c(\varepsilon)s^{-\alpha_{i}+\varepsilon}.
\end{equation*}
Using this we have for $s>\varepsilon$
\begin{align*}
    \mathbb{P}_C\bigl(D_C^{(\kappa)} \geq s, D_C^{(1)} \leq s^{\frac{2\alpha_{\kappa}}{\alpha_1}}\bigr) 
    &= \mathbb{P}_C\bigl(D_C^{(\kappa)} \geq s\bigr) -\mathbb{P}_C\bigl(D_C^{(\kappa)} \geq s, D_C^{(1)} > s^{\frac{2\alpha_{\kappa}}{\alpha_1}}\bigr)\\
    &\geq c^{-1}(s^{-(\alpha_{\kappa}+\varepsilon)} - s^{-2\alpha_{\kappa}+\varepsilon}).
\end{align*}
For $s$ big enough we then have
\begin{equation*}
    c^{-1}(s^{-(\alpha_{\kappa}+\varepsilon)} - s^{-2\alpha_{\kappa}+\varepsilon}) \geq \tfrac{1}{2}c^{-1}s^{-(\alpha_{\kappa}+\varepsilon)}, 
\end{equation*}
since for $\varepsilon>0$ small enough we have $-(\alpha_{\kappa}+\varepsilon) > -2\alpha_{\kappa}+\varepsilon$. This gives us the following lower bound for the second probability factor of \eqref{eq:TMM_key_step} 
\begin{equation*}
    \mathbb{P}_C\Biggl(
    \begin{array}{c}
        D_{C}^{_{(\kappa)}} \geq 2^{2(d-1){+\epsilon}}f_{_{n+1}},  \\
        D_{C}^{_{(1)}} \leq 2^{2(d-1)+\epsilon}f_{_{n+1}}^{\frac{2\alpha_{\kappa}}{\alpha_1}}
    \end{array}
    \Biggr) \geq c f_{_{n+1}}^{-(\alpha_{\kappa}+\varepsilon)}.
\end{equation*}
Following now the rest of the calculation as in \cite{Gracar2024a} we get
\begin{equation*}
    \mathbb{P}_x((A_n^x)^c\,|\,A_{n-1}^x) \leq \exp\bigl(  -u c f_n^{\epsilon(\alpha_{\kappa} -\kappa)/2}\bigr).
\end{equation*}
which finally gives us 
\begin{align*}
    \sum\limits_{n=1}^{\infty} \mathbb{P}_x(B_n^{(x)}) &\leq \sum\limits_{n=1}^{\infty}\exp\Bigl(-uc f_n^{\epsilon\frac{\alpha_{\kappa}-\kappa}{2}}\Bigr)\\
    &= \sum\limits_{\ell=0}^{\infty}\exp\Bigl(-u c \bigl\{f_0^{\epsilon(\alpha_{\kappa}-\kappa)/2}\bigr\}^{\bigl(\frac{\min\{d-\kappa,\kappa\}}{\alpha_{\kappa}-\kappa} -\epsilon \bigr)^\ell} \Bigr) =: \delta(f_0).
\end{align*}
This can be made arbitrary small by choosing $f_0$ big enough. Putting all of the bounds together, we obtain
\begin{align*}
    \mathbb{P}_{x,y}(\dist(\mathbf{x},\mathbf{y})\leq 2\Delta) &\leq \sum\limits_{n=1}^{\Delta} \mathbb{P}_{x}(B_n^{(x)}) +\sum\limits_{n=1}^{\Delta}\mathbb{P}_{y}(B_n^{(y)}) +\sum\limits_{n=1}^{2\Delta} \mathbb{P}_{x,y}(G_n^{(x,y)})\\
    &<     2\delta(f_0) \overset{f_0\rightarrow\infty}{\longrightarrow} 0. 
\end{align*}
\qed
\begin{rem}\label{rem:langsam}
    If $\alpha_k\geq\min\{2k,d\}$ for all $k\in\{1,..,d\}$ and therefore $M=\emptyset$, we use a standard almost sure coupling argument as follows. 
    For an arbitrary realisation of the model we extend for all $\mathbf{x}\in\mathcal{X}$ their first diameters $D_x^{_{(1)}}$ using the following deterministic transformation. Let $F$ be the cumulative distribution function of the first diameter in the model, and $G$ the cumulative distribution function of an arbitrary regularly varying random variable with index $2-\rho$ for small $\rho>0$. We replace a given first diameter $\ell$ by $G^{-1}(F(\ell))$ when this value is larger than $\ell$ and keep it as before otherwise. One can quickly check that the resulting model dominates the original one in the sense that all extended convex bodies contain their original versions; all of the bodies also remain convex and their further diameters remain unchanged. Crucially, the tail index of the first diameter in this updated model is $2-\rho$ and therefore the updated model has $M=\{1\}$ and $\alpha_k>k$ for all $k\in\{1,...,d\}$, so the main statement of Theorem \ref{Theorem} applies. Since this model dominates the original model for each realisation, the lower bound for its chemical distance is a lower bound for the original model as well.
    
    As proven, the lower bound is then with high probability equal to $\tfrac{2-\delta}{\log(2/(2-\rho))} \log\log|x-y|$ for all $\delta>0$. The factor in front of the $\log\log$ term is strictly decreasing and converges to infinity as $\rho\downarrow 0$. Consequently, we get that for models with $M=\emptyset$ such that $\alpha_k\geq\min\{2k,d\}$ for all $k\in\{1,\dots,d\}$ the chemical distance is bigger than $c\log\log|x-y|$ with high probability for any $c>0$.
\end{rem}

\section{Proof of the upper bound}\label{Sect:upper}
In this section we first focus on the case that $M\neq\emptyset$ and $\alpha_k>k$ for all $k\in\{1,\dots,d\}$. The proof of the upper bound will be shown for a fixed $k\in\{1,\dots,d-1\}$ which satisfies $\alpha_k \in (k, \min\{2k,d\})$, i.e.\ $k\in M$. Since the result will hold uniformly in $k$, it will in particular also hold for $\kappa$ as defined in Theorem \ref{Theorem}.
We consider the ``smallest'' Boolean model satisfying the condition $\alpha_k \in (k, \min\{2k,d\})$, which has the first $k$ diameters almost surely of the same size and regularly varying with index $-\alpha_k$. The other diameters are set to be deterministic of size $\epsilon>0$. Since we are interested in smallest convex bodies, we furthermore look at the $d$ dimensional convex polytopes as mentioned in Section 2, i.e.\ the convex hulls of $2d$ points which represent the end-points of the diameters and are therefore also the corners of the polytopes. Due to Lemma \ref{Lemma:one}, each such convex polytope with diameters $D^{_{(1)}},\dots,D^{_{(d)}}$ contains a box with side-lengths $2^{-2(d-1)}D^{_{(1)}},\dots,2^{-2(d-1)}D^{_{(d)}}$. As discussed above Lemma \ref{Lemma:two} and used in its proof, we work from here on out with these boxes instead of the larger polytopes that they are contained in. We can do this without affecting our final result, as we explain next.

Recall the definition of $\mathbb{P}=\,^u\mathbb{P}$ and the definition of $\theta(u)$; we work here with a fixed vertex intensity $u>0$ and so write $\mathbb{P}$ and $\theta$ instead of $^u\mathbb{P}$ and $\theta(u)$. Furthermore, we omit the scaling factor $2^{-2(d-1)}$ and assume that the boxes have side-lengths $D^{_{(1)}},\dots,D^{_{(d)}}$. We can do this without loss of generality, since constant factors do not affect the regularly varying distribution of the diameters (and by extension side-lengths of our boxes), and the Poisson point process of intensity $u$ rescaled by a factor $2^{-2(d-1)}$ remains a Poisson point process (with intensity $2^{-2(d-1)}u$). Consequently, using that we are in the robust regime of the model, we can start with intensity $2^{-2(d-1)}u$ instead of $u$ for the original model and proceed with our assumption.
Furthermore, using the same property of regularly varying distributions as above, we will treat the side-lengths of the boxes as if they were the \emph{de facto} diameters in order to keep geometric arguments easier to follow and the notation more concise (if we were being precise the calculations would, up to constant multiplicative factors, remain the same). We will use for the remainder of this section the terms \emph{convex body} and \emph{box} interchangeably, depending on which helps understand the current argument better.

With this we can proceed to the proof of the upper bound. We use a similar argument as in \cite{Hirsch2020} and \cite{Gracar2022a}, which relies on a classical sprinkling argument to connect various parts of the graph with each other.

Recall the definition of a ball $B_r(y)$ for $r>0$, $y\in\mathbb{R}^d$. We focus first on the regions around of $0$ and $x$, namely $B_{\frac{3}{8}|x|}(0)$ (resp.\ $B_{\frac{3}{8}|x|}(x)$) and show that $\mathbf{0}$ (resp.\ $\mathbf{x}$) is connected via a path to a vertex $\mathbf{z}_0$ (resp.\ $\mathbf{z}_x$), with $D^{_{(k)}}_{z_0}> f_0$ (resp.\ $D^{_{(k)}}_{z_x}> f_0$) in this region, where $f_0$ will be chosen later and the entire path is contained inside $B_{\frac{3}{8}|x|}(0)$ (resp.\ $B_{\frac{3}{8}|x|}(x)$). Note that $f_0$ above is the first element of the threshold sequence that was introduced in Section \ref{Sect:lower}.

Recall from Section \ref{Sect:lower} the definition $(\bar A_n^z)_{n\in\mathbb{N}}$ for a given vertex $\mathbf{z}\in\mathcal{X}$. As promised in Remark \ref{rem:A_bar_kappa}, we remind the reader now that from here on out, when referring to the arguments that we used in Section \ref{Sect:lower} they are to be read with $k\in M$ instead of with $\kappa$.  Due to \cite{Gracar2024a} we know that starting with a convex body with $D_z^{(k)}> f_0$, for some $\mathbf{z}\in\mathcal{X}$, the probability that we cannot find a path consisting of convex bodies which fulfil the events $(\bar A_n^z)_{n\in\mathbb{N}}$ 
is smaller than
\begin{equation}\label{eq:Gamma}
    2\left(\exp(-uc f_0^{d-\alpha_k-\epsilon} ) + \sum\limits_{\ell=0}^{\infty}\exp\Bigl(-u c \bigl\{f_0^{\epsilon(\alpha_k-k)/2}\bigr\}^{\bigl(\frac{\min\{d-k,k\}}{\alpha_k-k} -\epsilon \bigr)^\ell} \Bigr)\right)=: \Gamma(f_0).
\end{equation}
As in \cite[Section 2.2]{Gracar2024a}, $f_0$ can be chosen
large enough to guarantee $\Gamma(f_0)<1$. To that end, let now $\rho:=\rho(u,d,k,\alpha_k,\epsilon)>1$ be such to ensure $\Gamma(\rho)<1$. Consider now the event $Z_n$ that no path fulfilling the events $(\bar A_i^z)_{i\in\mathbb{N}}$ with $f_0=\rho^n$ exists. By the above, the probability of this event is $\Gamma(\rho^n)$ and furthermore, by \eqref{eq:Gamma} we also have
\begin{equation*}
    \sum\limits_{n=1}^\infty \mathbb{P}(Z_n)=\sum\limits_{n=1}^\infty \Gamma(\rho^n) <\infty.
\end{equation*}
By Borel-Cantelli, it follows that
\begin{equation*}
    \mathbb{P}(\liminf\limits_{n\rightarrow\infty} Z_n^c)=1.
\end{equation*} 
Consequently, 
there exists an almost surely finite $m\in\mathbb{N}$ such that starting the threshold sequence with $f_0=\rho^{m}$, there exists almost surely a path that satisfies the events $(\bar A_n^z)_{n\in\mathbb{N}}$.
Using this to get an infinite path we claim that the paths starting from $\mathbf{0}$ and from $\mathbf{x}$ are connected to vertices that fulfil the condition $D^{_{(k)}}_{z_0},D^{_{(k)}}_{z_x}\geq \rho^{m}$ and that both paths (which exists almost surely by the above argument) are connected with each other sufficiently early along each of them, by using a single further vertex, all of which occurs with high probability. From here on out, when requiring that $f_0$ is large enough, we will implicitly also assume that the above construction is followed.

\paragraph{Connecting $\mathbf{0}$ and $\mathbf{x}$ to infinite paths in almost surely bounded many steps.}

We need to ensure that the construction of the path from $\mathbf{0}$ to $\mathbf{z}_0$ (resp.\ $\mathbf{x}$ to $\mathbf{z}_x$) is independent of the path between $\mathbf{z}_0$ and $\mathbf{z}_x$, so we use the superposition and thinning properties of the Poisson point process to split it into two independent thinned versions, similarly to how it is done in \cite{Hirsch2020} or \cite{Gracar2022a}. With that in mind, let $b\in(0,1)$ be an arbitrary number; we colour each vertex of $\mathcal{X}$ black with probability $b$ and red with probability $r:=1-b$, independently of everything else. We write $\mathscr{G}^b_u$ (resp.\ $\mathscr{G}^r_u)$ for the graph induced by the black (resp.\ red) vertices in $\mathscr{G}_u:=\mathscr{G}$, where we write $u$ to emphasise the dependence of the model on the density parameter $u$. Moreover let $C_{\infty}^b$ (resp.\ $C_{\infty}^r$) be the unbounded connected component in $\mathscr{G}^b_u$ (resp.\ $\mathscr{G}^r_u$). Since we are working in the robust regime of the model, both of these components exist almost surely.

We define $E_0^b(n,s,v)$ to be the event that there exists a vertex $\mathbf{z}_0$ in the graph $\mathscr{G}^b_u$ with $D^{_{(k)}}_{z_0}>s$ such that $|z_0-0|<v$ and $\mathbf{z}_0$ and $\mathbf{0}$ are connected via at most $n$ further convex bodies. Similarly we define $E_x^b(n,s,v)$ for $\mathbf{x}$. In addition to that we define for $\mathbf{z}_0\in\mathcal{X}$ $E_{0,z_0}^b(n,s,v)$ as the event that $E_0^b(n,s,v)$ occurs and the vertex with location $z_0$ is the vertex that makes it occur. Similarly, we write $E_{x,z_x}^b(n,s,v)$ for $\mathbf{z}_x\in\mathcal{X}$ as the corresponding asked vertex in $E_{x}^b(n,s,v)$. Moreover we also note that $\{\mathbf{0}\leftrightarrow \mathbf{x}\}\cap \{\mathbf{0},\mathbf{x}\in C_{\infty}\}$ converges from below to the event $\{\mathbf{0},\mathbf{x}\in C_{\infty}\}$ as $|x|$ goes to infinity  due to the uniqueness of the unbounded component (see \cite{Chebunin2024}). 

To prove the full claim it suffices to show that for every $s>0$ there exists almost surely a finite random variable $N(s)$ such that 
\begin{equation}\label{eq:upperbound_theta}
    \lim_{b\uparrow 1} \liminf_{s\rightarrow \infty} \liminf_{|x|\rightarrow \infty} \mathbb{P}_{0,x} (\{\mathbf{0},\mathbf{x} \in C_{\infty}^b\} \cap E_0^b(N(s),s,|x|/8) \cap E_x^b(N(s),s,|x|/8) \cap F) \geq \theta^2,
\end{equation}
where $F$ is the event that there exists a path such that $\mathbf{0}$ and $\mathbf{x}$ are connected in fewer than $(2+\delta)\log\log|x|/\bigl(\log(\min\{d-k,k\})-\log(\alpha_k-k)\bigr))$ steps for small $\delta>0$.  
We show first
\begin{equation*}
    \liminf_{s\rightarrow \infty} \liminf_{|x|\rightarrow \infty} \mathbb{P}_{0,x} (\{\mathbf{0},\mathbf{x} \in C_{\infty}^b\} \cap E_0^b(N(s),s,|x|/8) \cap E_x^b(N(s),s,|x|/8) \cap F) \geq \theta_b^2,
\end{equation*}
from which \eqref{eq:upperbound_theta} follows by Proposition \ref{Prop:one}. It holds that 
\begin{align}
    \mathbb{P}_{0,x} (&\{\mathbf{0},\mathbf{x} \in C_{\infty}^b\} \cap E_0^b(N(s),s,|x|/8) \cap E_x^b(N(s),s,|x|/8) \cap F)\notag\\
    &= \mathbb{E}_{0,x} \Bigl[\mathbb{P}_{0,x,z_0,z_x} (\{\mathbf{0},\mathbf{x} \in C_{\infty}^b\} \cap E_{0,z_0}^b(N(s),s,|x|/8) \cap E_{x,z_x}^b(N(s),s,|x|/8) \cap F) \Bigr]\notag \\
    &\geq \mathbb{P}_{0,x} ( \{\mathbf{0},\mathbf{x} \in C_{\infty}^b\} )\notag\\
    &\hspace{20pt} - \mathbb{P}_{0,x} ( \{\mathbf{0}\in C_{\infty}^b\} \setminus E_0^b(N(s),s,|x|/8) ) \notag\\
    &\hspace{20pt} - \mathbb{P}_{0,x} ( \{\mathbf{x}\in C_{\infty}^b\} \setminus E_x^b(N(s),s,|x|/8) ) \label{eq:upperbound_key_calculation}\\
    &\hspace{20pt}-  \mathbb{E}_{0,x} \Bigl[\mathbb{P}_{0,x,z_x,z_0}\bigl((E_{0,z_0}^b(N(s),s,|x|/8)\cap E_{x,z_x}^b(N(s),s,|x|/8)) \setminus F\bigr)\Bigr].\notag
\end{align}
We now show that the last three summands converge to zero as $s$ and $|x|$ are made large.
Due to the translation invariance of the Poisson process, the second and third summands are equal.
Considering the limit $s\rightarrow \infty$ and using the same arguments as for the proof in \cite[Lemma~3.2]{Gracar2022a} gives that the second and third term converge to $0$, since the probability of arbitrary big convex bodies belonging to the unbounded connected component is positive.
It remains to consider the last summand of \eqref{eq:upperbound_key_calculation}. For this we need two things. First, we will use the properties of the infinite path that satisfies $(\bar A_n^{z_0})_{n\in\mathbb{N}}$ (resp.\ $(\bar A_n^{z_x})_{n\in\mathbb{N}}$), so that we have two growing sequences of convex bodies. Second, we will find a single convex body to ensure that the two infinite paths are connected to each other ``early on''.

\paragraph{Reaching sufficiently powerful vertices in fewer than $\tfrac{(1+\delta)}{\log\bigl( \frac{\min\{d-k,k\}}{\alpha_k - k}\bigr)}\log\log|x-y|$ many steps, with high probability.}
As argued before, we know that an infinite path exists when starting with a convex body which is large enough, namely if its threshold sequence begins with $f_0>0$ big enough. We focus now on the infinite path started from $\mathbf{z}_0$ using that $(\bar A_n^{z_0})_{n\in\mathbb{N}}$ holds and denote by $n_1$ the first index $i$ in the path, for which the corresponding threshold $f_i\geq|x|/8$. For the path starting in $\mathbf{z}_0$ we denote this convex body with its location by $\mathbf{y}_0$ (resp. $\mathbf{y}_x$ for the path starting in $\mathbf{z}_x$). To avoid confusion, note that $\mathbf{y}_0$ is not the $n_1$th vertex of the path started in $\mathbf{0}$, but is instead only the $n_1$th vertex \emph{after} $\mathbf{z}_0$, which is itself almost surely finitely many steps along the path. The same observation of course holds also for $\mathbf{y}_x$. Let now, as before, $f_0$ be large enough. This leads to
\begin{align*}
     &f_{n_1} \geq |x|/8\\
    &\Leftrightarrow f_0^{\bigl(\frac{\min\{d-k,k\}}{\alpha_k-k}-\epsilon\bigr)^{n_1}} \geq|x|/8 \\
     &\Leftrightarrow n_1\log \bigl(\frac{\min\{d-k,k\}}{\alpha_k-k}-\epsilon\bigr) + \log\log f_{{0} } \geq\log\log (|x|/8).
\end{align*}
Recall that we are interested in an upper bound for the chemical distance. We choose therefore
\begin{equation*}
    n_1 \geq \Biggl\lceil \frac{\log\left(\log |x| - \log(8)\right)-\log\log f_{_{0} }}{\log \bigl(\frac{\min\{d-k,k\}}{\alpha_k-k}-\epsilon\bigr)}  \Biggl\rceil.
\end{equation*}
We can assume without loss of generality that $|x|$ is large enough so that the locations of the vertex $\mathbf{y}_0$ as well as all preceding vertices is in $B_{\frac{3}{8}|x|}(0)$ and by translation invariance the same is true for $\mathbf{y}_x$ and the ball $B_{\frac{3}{8}|x|}(x)$. We can assume without loss of generality that $f_{n_1}= |x|/8$ as we have that $\lceil |x|\rceil/|x|$ converges to $1$ as $|x|\rightarrow \infty$. In all other cases we get some (bounded) multiplicative factors in the calculations which do not affect the results.

Before proceeding to the final missing step, let us summarise our work so far. Once \eqref{eq:upperbound_theta} is established, we have that on the event that $\{\mathbf{0},\mathbf{x}\in C_\infty\}$, one can in at most almost surely finitely many (in fact and crucially, almost surely bounded many) steps connect $\mathbf{0}$ to some vertex $\mathbf{z}_0$ with high probability as $|x|\to\infty$. The convex body of this vertex is then sufficiently large that it is almost surely the first vertex of an infinite path satisfying the events $(\bar A_n^{z_0})_{n\in \mathbb{N}}$, whose $n_1$th vertex $\mathbf{y}_0$ has the $k$th side-length larger than $|x|/8$ and the path from $\mathbf{z}_0$ to $\mathbf{y}_0$ takes fewer than $(1 +\delta)\log\log |x|/\log \bigl(\frac{\min\{d-k,k\}}{\alpha_k-k}\bigr)$ many steps for small $\delta>0$; the same is true also for the vertex $\mathbf{x}$. Once we establish that $\mathbf{y}_0$ and $\mathbf{y}_x$ are connected to the same vertex with an even larger convex body with high probability, \eqref{eq:upperbound_theta} will be proven and the proof of the upper bound complete.
The above construction is outlined in Figure \ref{fig:upper_bound_sketch}, showing how the path connecting $\mathbf{0}$ with $\mathbf{x}$ arises.

\begin{figure}[!ht] \begin{annotate}{
              \includegraphics[width=1\linewidth]{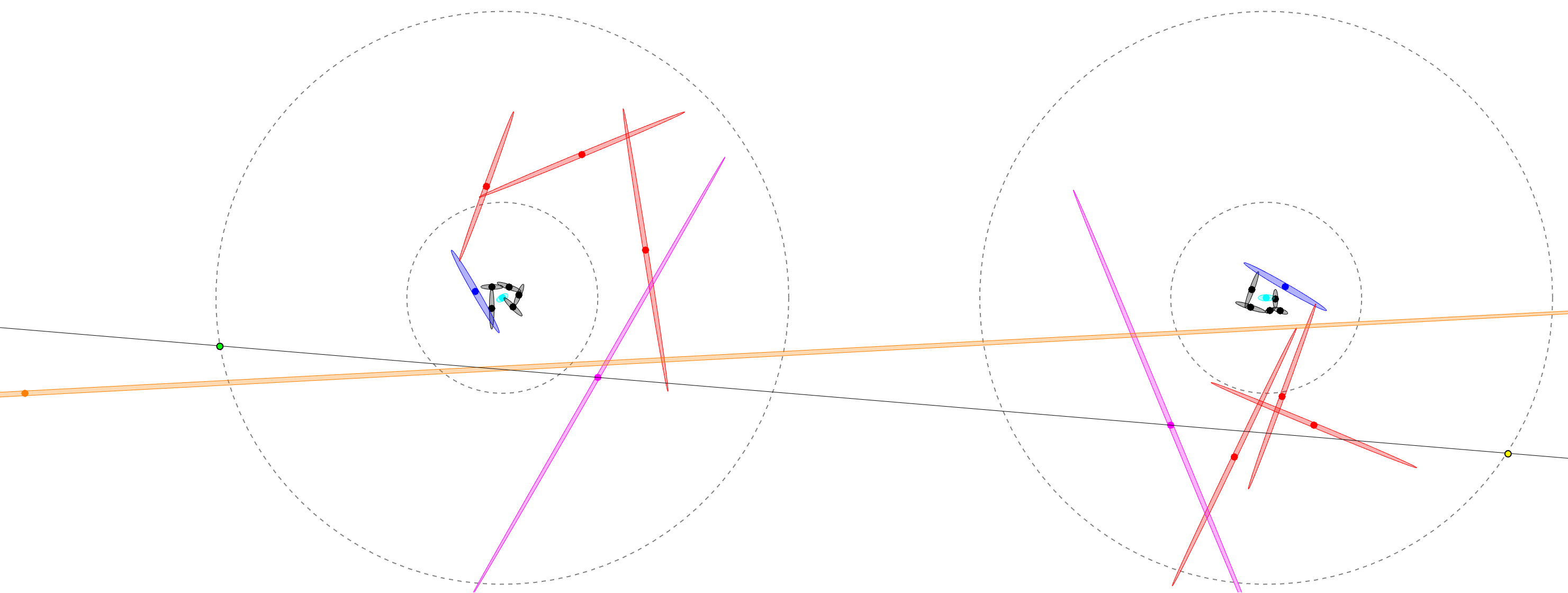}}{1}
              \draw (-2.62,0) -- (-2.4,0.3);
              \node at (-2.3,0.4) {$\mathbf{0}$};
              \draw (4.5,0) -- (4.7,0.3);
              \node at (4.75,0.4) {$\mathbf{x}$};
              \draw (-2.85,-0.05) -- (-3.1,-0.3);
              \node at (-3.2,-0.4) {$\mathbf{z}_0$};
              \draw (4.85,0) -- (5.2,0);
              \node at (5.4,0) {$\mathbf{z}_x$};
              \draw (-1.88,-1) -- (-1.65,-1.3);
              \node at (-1.5,-1.5) {$\mathbf{y}_0$};
              \draw (3.722,-1.45) -- (3.55,-1.75);
              \node at (3.4,-1.9) {$\mathbf{y}_x$};
              \draw (-5.29,-0.438) -- (-5.45,-0.25);
              \node at (-5.65,-0.15) {$v_0$};
              \draw (6.75,-1.45) -- (6.6,-1.3);
              \node at (6.45,-1.15) {$v_x$};
              \draw (-6.95,-0.9) -- (-6.75,-1.2);
              \node at (-6.6,-1.35) {$\mathbf{y}$};
              \draw[->] (-3.9,1) -- (-3.4,0.5);
              \node at (-4,1.25) {$\partial B_{|x|/8}(0)$};
              \draw[->] (-5.3,-1.9) -- (-4.68,-1.75);
              \node at (-6.2,-2.0) {$\partial B_{3|x|/8}(0)$};
              \draw[->] (5.75,1) -- (5.25,0.5);
              \node at (6,1.25) {$\partial B_{|x|/8}(x)$};
              \draw[->] (1.95,-1.9) -- (2.45,-1.75);
              \node at (1.03,-2.0) {$\partial B_{3|x|/8}(x)$};
              \draw[line width=2pt] (0.025,0.3) -- (0.025,-0.3);
              \draw[line width=2pt] (1.85,0.3) -- (1.85,-0.3);
              \draw[->] (0.6,0) -- (0.035,0);
              \draw[->] (1.205,0) -- (1.84,0);
              \node at (0.9025,0) {$\tfrac{|x|}{2}$};
            \end{annotate}
              \caption{In cyan blue $\mathbf{0}$ and $\mathbf{x}$, in black the path connecting $\mathbf{0}$ with $\mathbf{z}_0$ (resp.\ $\mathbf{x}$ with $\mathbf{z}_x$), in blue $\mathbf{z}_0$ and $\mathbf{z}_x$, the path connecting $\mathbf{z}_0$ with $\mathbf{y}_0$ in red (resp.\ $\mathbf{z}_x$ with $\mathbf{y}_x$), $\mathbf{y}_0$ and $\mathbf{y}_x$ in pink and in orange the vertex $\mathbf{y}$ connecting $\mathbf{y}_0$ and $\mathbf{y}_x$. The dashed gray circles are the boundary of $B_{|x|/8}(0)$, $B_{3|x|/8}(0)$, $B_{|x|/8}(x)$ and $B_{3|x|/8}(x)$. $v_0$ and $v_x$ as defined in \eqref{eq:v0} and \eqref{eq:vx}.}
              \label{fig:upper_bound_sketch}
\end{figure}
\paragraph{Connecting the two sufficiently powerful vertices through a single connecting vertex with high probability.} 

We now proceed to argue this last step. For that we define $F_{n_1}^{(0,z_0)}$ (resp.\ $F_{n_1}^{(x,z_x)}$) as the event that there exists a path of length $n_1$ in the graph $\mathscr{G}^r_u$ such that starting with $\mathbf{z}_0$ (resp.\ $\mathbf{z}_x$) the path satisfies $\bar A_{n_1}^{z_0}$ (resp.\ $\bar A_{n_1}^{z_x}$). In addition to that we define $F_{n_1}^{(0,z_0,y_0)}$ as the event that in $F_{n_1}^{(0,z_0)}$, $\mathbf{y}_0$ is the $n_1$th vertex in the path that implies $\bar A_{n_1}^{z_0}$. Analogously we define $F_{n_1}^{(x,z_x,y_x)}$ for $\mathbf{y}_x$. Moreover let $F^{(0,x,y_0,y_x)}$ be the event that there exists a vertex outside of $B_{|x|^{\frac{\min\{d-k,k\}}{\alpha_k-k}-\epsilon}}(0)$ with a corresponding box that intersects $\mathbf{y}_0$ and $\mathbf{y}_x$. 
Note that under the condition that $0,x,z_0,z_y,y_0,y_x\in\mathscr{P}$ we have that $F_{n_1}^{(0,z_0,y_0)}\cap F_{n_1}^{(x,z_x,y_x)}\cap F^{(0,x,y_0,y_x)}\subseteq F$, conditioned on $E_0^b(D(s),s,|x|/8)\cap E_x^b(D(s),s,|x|/8)$; i.e.\ the start of the path in $\mathbf{0}$ and $\mathbf{x}$ being successful.  
From here on we write $E_{0,z_0}^b$ for $E_{0,z_0}^b(D(s),s,|x|/8)$ (resp.\ $E_{x,z_x}^b$ for $E_{x,z_x}^b(D(s),s,|x|/8)$) to keep the notation concise. We will now focus on the probability in the brackets of the last summand in \eqref{eq:upperbound_key_calculation}. We can rewrite this term to
\begin{equation}\label{eq:last_summand_rewritten}
    \mathbb{P}_{0,x,z_0,z_x}\bigl((E_{0,z_0}^b\cap E_{x,z_x}^b )\setminus F\bigr) = \mathbb{P}_{0,x,z_0,z_x}\bigl( F^c \,|\,E_{0,z_0}^b\cap E_{x,z_x}^b\bigr)\mathbb{P}_{0,x,z_0,z_x}\bigl(E_{0,z_0}^b\cap E_{x,z_x}^b\bigr)
\end{equation}
in order to work with the conditional probability given $E_{0,z_0}^b\cap E_{x,z_x}^b$, and use the events defined earlier to bound the last expression from above by using
\begin{align}
    &\mathbb{E}_{0,x,z_0,z_x}\Bigl[\mathbb{P}_{0,x,z_0,z_x,y_0,y_x}\bigl( (F_{n_1}^{(0,z_0,y_0)}\cap F_{n_1}^{(x,z_x,y_x)}\cap F^{(0,x,y_0,y_x)})^c \,|\,E_{0,z_0}^b\cap E_{x,z_x}^b\bigr)\notag\\
    &\hspace{250pt}\times\mathbb{P}_{0,x,z_0,z_x,y_0,y_x}\bigl(E_{0,z_0}^b\cap E_{x,z_x}^b\bigr)  \Bigr]\notag \\ 
    &\leq \mathbb{P}_{0,x,z_0,z_x}\bigl( (F_{n_1}^{(0,z_0)})^c\cup (F_{n_1}^{(x,z_x)})^c \,|\,E_{0,z_0}^b\cap E_{x,z_x}^b\bigr) \label{eq:upperbound_for_last_summand} \\
    &\hspace{15pt}+\mathbb{E}_{0,x,z_0,z_x}\Bigl[\mathbb{P}_{0,x,z_0,z_x,y_0,y_x}\bigl( (F_{n_1}^{(0,z_0,y_0)}\cap F_{n_1}^{(x,z_x,y_x)})\cap (F^{(0,x,y_0,y_x)})^c \,|\,E_{0,z_0}^b\cap E_{x,z_x}^b\bigr)\notag\\
    &\hspace{250pt}\times\mathbb{P}_{0,x,z_0,z_x,y_0,y_x}\bigl(E_{0,z_0}^b\cap E_{x,z_x}^b\bigr)\bigr].\notag
\end{align}

The first summand is by our choice of $f_0$ almost surely equal to $0$.
In order to show that the last summand in \eqref{eq:upperbound_key_calculation} converges to zero it suffices to show that the second summand in \eqref{eq:upperbound_for_last_summand}
or equivalently
\begin{equation}\label{eq:upperbound_probability_no_connector}
    \mathbb{E}_{0,x,z_0,z_x}\Bigl[\mathbb{P}_{0,x,z_0,z_x,y_0,y_x}\bigl(F_{n_1}^{(0,z_0,y_0)}\cap F_{n_1}^{(x,z_x,y_x)}\cap (F^{(0,x,y_0,y_x)})^c \cap E_{0,z_0}^b\cap E_{x,z_x}^b\bigr) \bigr]
\end{equation}
has a uniform bound over all $z_0$ and $z_x$ which converges to zero as $|x|\rightarrow \infty$; this implies that integrating \eqref{eq:last_summand_rewritten} over $z_0$ and $z_x$ leads to a term that converges to zero as $|x|\rightarrow\infty $.
We focus now on $(F^{(0,x,y_0,y_x)})^c $ and introduce some new terminology to help us with that. Recall that  $p_v^{(i)}$ is the orientation of the $i$th side-length of the box $C_v$ for $\mathbf{v}\in\mathcal{X}$ and $i\in\{1,\dots,d\}$. We say a vertex $\mathbf{v}\in\mathcal{X}$ is \emph{good with respect to} the vertex $\mathbf{w}\in\mathcal{X}$ if it satisfies the following two conditions.
\begin{enumerate}
    \item[(G1)] The vertex $\mathbf{v}$ has \emph{good orientation} relative to the orientation of $\mathbf{w}$ in the sense that for all $i\in\{1,\dots,d\}$ the orientation satisfies
    \begin{equation*}
    p_{v}^{(i)}\in\{\phi \in\mathbb{S}^{d-1}:\, \measuredangle ( \phi,p_{w}^{(i)})\leq \tfrac{\epsilon}{\log|x|}\}.
\end{equation*}

    \item[(G2)] The location $v$ of $\mathbf{v}$ has \emph{good position} relative to $\mathbf{w}$, i.e.\ it satisfies
\begin{equation*}
        v \in  \left\{y\in\mathbb{R}^d: |w-y| \geq 2 f_{n_1}, \measuredangle\big(w-y, \pm p_{v}^{_{(j)}}\big)> \varphi \mbox{ for all }  1\leq j\leq k \right\}=:A(w,n_1),
    \end{equation*}
    where $\varphi=2^{-k}\pi$.

\end{enumerate}
It is useful at this stage to note that by construction $|y_0-y_x|\geq 2f_{n_1}$, which is one of the two conditions in (G2). We also observe (and prove later on) that
when $\mathbf{v}$ is good relative $\mathbf{w}$ we have that the intersection of the orthogonal projections of $C_v$ and $C_w$ onto a hyperplane perpendicular to $v-w$, has $(d-1)$ dimensional Lebesgue measure of order $f_{n_1}^k$. In addition to that (G2) gives us roughly that $v$ does not lie ``too close'' to the affine subspace through $w$ spanned by the $k$ large side-lengths of $C_w$, where $\varphi$ is chosen as in $(\bar A_n^x)_{n\in\mathbb{N}}$. (G2) also ensures that $v$ has distance at least $2f_{n_1}$ to $w$. Note that this is similar to the definition of \eqref{eq:Oi}. 
Observe also that the restriction in (G1) becomes tighter for large $|x|$, and so for sufficiently large $|x|$ we get that $\mathbf{v}$ being good with respect to $\mathbf{w}$ implies almost surely also the reverse relationship, i.e.\ $\mathbf{w}$ is good with respect to $\mathbf{v}$. 

Using this (in particular, that $|x|$ is large) we can rewrite $(F^{(0,x,y_0,y_x)})^c$ as 
\begin{align*}
    (F^{(0,x,y_0,y_x)})^c =& \hspace{10pt}\Bigl(\{\not \exists y\in\mathscr{P}\setminus B_{|x|^{\frac{\min\{d-k,k\}}{\alpha_k-k}-\epsilon}}(0): C_y\cap C_{{y}_x}\neq \emptyset \text{ and }C_y\cap C_{{y}_0}\neq \emptyset\}\\
    &\hspace{10pt}\cap \{\mathbf{y}_0 \text{ and }\mathbf{y}_x \text{ are good with respect to each other.}\}\Bigr)\\
    &\hspace{5pt}\cup \Bigl(\{\not \exists y\in\mathscr{P}\setminus B_{|x|^{\frac{\min\{d-k,k\}}{\alpha_k-k}-\epsilon}}(0): C_y\cap C_{{y}_x}\neq \emptyset \text{ and }C_y\cap C_{{y}_0}\neq \emptyset\}\\
    &\hspace{10pt}\cap \{\mathbf{y}_0 \text{ and }\mathbf{y}_x \text{ are not good with respect to each other.}\}\Bigr)\\
    &\subseteq \Bigl(\{\not \exists y\in\mathscr{P}\setminus B_{|x|^{\frac{\min\{d-k,k\}}{\alpha_k-k}-\epsilon}}(0): C_y\cap C_{{y}_x}\neq \emptyset \text{ and }C_y\cap C_{{y}_0}\neq \emptyset\}\\
    &\hspace{6pt}\cap \{\mathbf{y}_0 \text{ and }\mathbf{y}_x \text{ are good with respect to each other.}\}\Bigr)\\
    & \hspace{5pt}\cup  \{\mathbf{y}_0 \text{ and }\mathbf{y}_x \text{ are not good with respect to each other.}\}\\
    &\hspace{130pt}=: (E_1^{(y_0,y_x)} \cap E_2^{(y_0,y_x)} ) \cup E_3^{(y_0,y_x)} .
\end{align*}
With this we can bound \eqref{eq:upperbound_probability_no_connector} from above by 
\begin{align*} 
    \mathbb{P}_{0,x,z_0,z_x,y_0,y_x}&\bigl(E_1^{(y_0,y_x)} \cap E_2^{(y_0,y_x)}  \cap F_{n_1}^{(0,z_0,y_0)}\cap F_{n_1}^{(x,z_x,y_x)}\cap E_{0,z_0}^b\cap E_{x,z_x}^b\bigr) \\
    &\quad\quad+\mathbb{P}_{0,x,z_0,z_x,y_0,y_x}\bigl( E_3^{(y_0,y_x)}  \cap  F_{n_1}^{(0,z_0,y_0)}\cap F_{n_1}^{(xz_x,y_x)}\cap E_{0,z_0}^b\cap E_{x,z_x}^b\bigr)\\
    & \leq \mathbb{P}_{0,x,z_0,z_x,y_0,y_x}\bigl(E_1^{(y_0,y_x)} \,|\, E_2^{(y_0,y_x)}  \cap F_{n_1}^{(0,z_0,y_0)}\cap F_{n_1}^{(xz_x,y_x)}\cap E_{0,z_0}^b\cap E_{x,z_x}^b\bigr) \\&\quad\quad+\mathbb{P}_{0,x,z_0,z_x,y_0,y_x}\bigl( E_3^{(y_0,y_x)}  \cap F_{n_1}^{(0,z_0,y_0)}\cap F_{n_1}^{(x,z_x,y_x)}\cap E_{0,z_0}^b\cap E_{x,z_x}^b\bigr).
\end{align*}
We will show that this expression has a uniform upper bound over all $z_0,z_x,y_0,y_x$ and that this bound converges to zero as $|x|\rightarrow \infty$. For that we define similarly to \cite{Gracar2024a}  the $\sigma$-algebra $\mathfrak F_n^{x}$, generated by the restriction of the (simplified box) Poisson-Boolean base model to the points with locations inside $B_{3f_n}(0) \cup B_{3f_n}(x) $. Using this we have that 
$E_2^{(y_0,y_x)}$, $ F_{n_1}^{(0,z_0,y_0)}$, $ F_{n_1}^{(x,z_x,y_x)}$, $ E_{0,z_0}^b$, $ E_{x,z_x}^b$ are $\mathfrak F_{n_1}^x$ measurable.
\begin{lemma}\label{Lemma:three}
    For the events defined as above, we have 
    \begin{equation*}
        \mathbb{P}_{0,x,z_0,z_x,y_0,y_x}\!\bigl(E_1^{(y_0,y_x)} | E_2^{(y_0,y_x)}  \cap F_{n_1}^{(0,z_0,y_0)}\cap F_{n_1}^{(x,z_x,y_x)}\cap E_{0,z_0}^b\cap E_{x,z_x}^b\bigr)\!\!\leq \!\exp\bigl(  -u c |x|^{\epsilon(\alpha_k -k)/2}\bigr).
    \end{equation*}
\end{lemma}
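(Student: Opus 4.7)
The plan is to exploit the Poisson independence of vertices outside a large ball. Set $R:=|x|^{\frac{\min\{d-k,k\}}{\alpha_k-k}-\epsilon}$; since $k\in M$ gives $\alpha_k-k<\min\{d-k,k\}$, the exponent exceeds $1$ and $R\gg 3f_{n_1}=3|x|/8$ for $|x|$ large, so $\mathfrak F_{n_1}^x$ is independent of the restriction of $\mathcal X$ to $B_R(0)^c$. All five conditioning events in the statement are $\mathfrak F_{n_1}^x$-measurable, so $y_0,y_x$ and the bodies $C_{y_0},C_{y_x}$ are fixed by the conditioning while the outer Poisson process is undisturbed. The event $E_1^{(y_0,y_x)}$ is then precisely the event that this outer process contains no vertex whose body intersects both $C_{y_0}$ and $C_{y_x}$, and Poisson void probability identifies the conditional probability with $\exp(-\mu)$, where
\begin{equation*}
\mu \;=\; u\int_{B_R(0)^c} \mathbb{P}_C\bigl((y+C)\cap C_{y_0}\neq\emptyset,\ (y+C)\cap C_{y_x}\neq\emptyset\bigr)\,dy.
\end{equation*}
It therefore suffices to prove $\mu\geq c|x|^{\epsilon(\alpha_k-k)/2}$.

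To lower bound $\mu$, I restrict integration to the annulus $\mathcal A:=\{y:R\leq|y|\leq 2R\}$, whose Lebesgue measure is of order $R^d$. For fixed $y\in\mathcal A$ I factorise the integrand into (i) the Potter tail bound $\mathbb{P}_C(D^{(k)}\geq c|y|)\geq c|y|^{-\alpha_k-\varepsilon}$, which forces $C_y$ to be long enough to reach from $y$ back near the origin, and (ii) the conditional probability, given this tail event, that the orientation of $C_y$ positions it to simultaneously intersect $C_{y_0}$ and $C_{y_x}$. For (ii), the good-orientation condition (G1) embedded in $E_2^{(y_0,y_x)}$ forces the $k$-dimensional slab subspaces of $C_{y_0}$ and $C_{y_x}$ to agree within angle $\epsilon/\log|x|$. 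Since both slabs have $k$-side length of order $f_{n_1}$, on the hyperplane perpendicular to $y-y_0$ the discrepancy between the two target bodies is at most $\epsilon f_{n_1}/\log|x|$, which is negligible compared to $f_{n_1}$ itself. Consequently, the geometric requirement for $C_y$ to intersect both targets reduces, up to a constant, to the single-target intersection problem already handled in Section \ref{Sect:lower}, and the computation leading to \eqref{eq:TMM_key_step} then yields
\begin{equation*}
\mathbb{P}_C\bigl((y+C)\cap C_{y_0}\neq\emptyset,\ (y+C)\cap C_{y_x}\neq\emptyset \,\bigm|\, D^{(k)}\geq c|y|\bigr)\;\geq\; c\,\frac{f_{n_1}^{\min\{d-k,k\}}}{|y|^{d-k}}.
\end{equation*}

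Multiplying (i) and (ii) and integrating $y$ over $\mathcal A$ gives
\begin{equation*}
\mu \;\geq\; c\,R^d\cdot R^{-\alpha_k-\varepsilon}\cdot\frac{f_{n_1}^{\min\{d-k,k\}}}{R^{d-k}} \;=\; c\,f_{n_1}^{\min\{d-k,k\}}\,R^{-(\alpha_k-k)-\varepsilon}.
\end{equation*}
Substituting $R=|x|^{\frac{\min\{d-k,k\}}{\alpha_k-k}-\epsilon}$ and $f_{n_1}=|x|/8$, the $|x|^{\min\{d-k,k\}}$ factors cancel exactly and one is left with $\mu\geq c\,|x|^{\epsilon(\alpha_k-k)-\varepsilon\cdot O(1)}$, which is at least $c|x|^{\epsilon(\alpha_k-k)/2}$ once $\varepsilon$ is chosen small enough relative to $\epsilon$ (permissible in the Potter bounds). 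The main obstacle is step (ii): one has to verify carefully that the angular discrepancy $\epsilon/\log|x|$ between the orientations of $C_{y_0}$ and $C_{y_x}$ is genuinely negligible compared to the orientation tolerance $\sim f_{n_1}/|y|$ that a length-$|y|$ slab enjoys when targeting a single body of size $f_{n_1}$, so that the set of orientations of $C_y$ hitting one target is, up to a constant factor, the same as the set hitting both. Once this geometric reduction is made precise, the remainder of the argument parallels the key step in the truncated first moment computation of Section \ref{Sect:lower}.
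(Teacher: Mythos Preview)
Your overall framework is sound: the conditioning events are $\mathfrak F_{n_1}^x$-measurable, the Poisson void probability reduces the problem to lower-bounding $\mu$, and the Potter tail bound handles factor (i). The final algebra is also correct. The gap is entirely in step (ii), and it is a real one.

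Your justification for (ii) says that on the hyperplane perpendicular to $y-y_0$ ``the discrepancy between the two target bodies is at most $\epsilon f_{n_1}/\log|x|$''. This conflates orientation discrepancy with spatial displacement. Condition (G1) only aligns the \emph{orientations} of $C_{y_0}$ and $C_{y_x}$; it says nothing about the displacement of their centres, which is $|y_0-y_x|\in[|x|/4,7|x|/4]$. When you project onto the hyperplane perpendicular to $y-y_0$, the projected centres differ by the component of $y_0-y_x$ orthogonal to $y-y_0$, and for generic $y$ in your annulus this component is of order $|x|=8f_{n_1}$, i.e.\ \emph{larger} than the long side $f_{n_1}$ of either box. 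Consequently the two projected targets need not overlap at all, and the set of orientations of $C_y$ hitting $C_{y_0}$ is not, up to constants, the set hitting both. The failure is most transparent for $k=1$: a single long direction cannot simultaneously point at both $y_0$ and $y_x$ when $y$ lies roughly perpendicular to the segment $y_0y_x$. So the single-target lower bound cannot be applied uniformly over the annulus $\mathcal A$, and the integral $\mu\geq cR^d\cdot(\text{single-target integrand})$ is unjustified.

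The paper repairs this in two ways. First, it projects onto the hyperplane perpendicular to $y_0-y_x$ rather than $y-y_0$; along that direction the centres collapse to the same point, and then (G1) together with (G2) (which you do not use, but which guarantees the long directions are not killed by this projection) yields an intersection region $T_{v_x}$ of $(d-1)$-volume $\sim f_{n_1}^k\epsilon^{d-1-k}$, via Lemma~\ref{Lemma:one}. Second, it does not integrate over the full annulus but over a cone $I_{y_0,y_x}$ aligned with $y_0-y_x$, engineered so that any $C_y$ with $y\in I_{y_0,y_x}$, $D_y^{(k)}$ large enough, and suitable orientation pierces a fixed fraction of $T_{v_x}$ on both sides, hence intersects both $C_{y_0}$ and $C_{y_x}$. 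The integral over $I_{y_0,y_x}$ then gives the same exponent you obtained, but now legitimately.
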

\begin{proof}
    We define for $v,w,z\in\mathbb{R}^d$ and $A\subseteq\mathbb{R}^d$ the set $P_{v,w}^{z}(A)$ as the orthogonal projection of $A$ onto the hyperplane through $z$ perpendicular to $w-v$. In addition to that define  
    \begin{equation}\label{eq:v0}
        v_0:=\frac{y_0-y_x}{|y_0-y_x|}\gamma_0(y_0,y_x)+ y_0,
    \end{equation}
    and 
    \begin{equation}\label{eq:vx}
        v_{x}:=\frac{y_x-y_0}{|y_x-y_0|}\gamma_x(y_0,y_x)+y_x
    \end{equation}
    where $\gamma_0(y_0,y_x),\gamma_x(y_0,y_x)>0$ are chosen such that $v_0\in \partial B_{3|x|/8}(0)$ and $v_x\in\partial B_{3|x|/8}(x)$. $v_0$ and $v_x$ can be found in Figure~\ref{fig:upper_bound_sketch}. 
    
    With this we define 
    \begin{equation*}
        T_{v_x}:=P_{y_0,y_x}^{v_x}(C_{y_0})\cap P_{y_0,y_x}^{v_x}(C_{y_x}),
    \end{equation*}
    that is, the intersection of the orthogonal projections of $C_{y_0}$ and $C_{y_x}$ onto the hyperplane through $v_x$ perpendicular to $y_x-y_0$. An example for $T_{v_x}$ in $\mathbb{R}^3$ is pictured in Figure~\ref{fig:connector_of_y0yx}. 

    Since we are conditioning on the event that $\mathbf{y}_0$ and $\mathbf{y}_x$ are good with respect to each other, we can control the size of the area of $T_{v_x}$.
    Recall that we assume $|x|$ is large enough for the \emph{good with respect to} property to be symmetric, so that $T_{v_x}$ is a $(d-1)$ dimensional polytope with diameters $D_T^{(1)},\dots,D_T^{(d-1)}$ which satisfy $D_T^{(i)}\geq \tfrac{1}{2}\varphi\min\bigl\{D_{y_0}^{(i)},D_{y_x}^{(i)}\bigr\}$ for all $i\in\{1,\dots,d-1\}$. 
    Consequently, the $(d-1)$ dimensional Lebesgue mass of $T_{v_x}$ is at most
    \begin{equation*}
        c \prod\limits_{i=1}^{d-1} \min\bigl\{D_{y_0}^{(i)},D_{y_x}^{(i)}\bigr\},
    \end{equation*}
    where $c$ is a constant that depends only on $d$ and $k$. 
    Furthermore, $D_T^{(1)},\dots ,D_T^{(k)}\geq \tfrac{1}{2}\varphi f_{n_1}$ and $D_T^{(k+1)},\dots ,D_T^{(d-1)}\geq \tfrac{1}{2}\varphi\epsilon$. 
    
    Using now Lemma~\ref{Lemma:one} we know that there exists a $(d-1)$ dimensional box contained in $T_{v_x}$ which is congruent to 
    \begin{equation*}
        c\cdot\bigl([0,2^{-2(d-2)}f_{n_1}]^{k}\times [0,2^{-2(d-2)}\epsilon]^{d-1-k}\times\{0\}\bigr).
    \end{equation*}
    We denote by $B_{T_{v_x}}$ the box which satisfies this. This box is in general not unique, so we choose without loss of generality $B_{T_{v_x}}$ using an arbitrary lexicographic ordering of the boxes with respect to their centers, orientations and side-lengths. Note that the set of all possible boxes is closed and as such, a minimal (or maximal) element of can be uniquely chosen.
   In addition to that we define $\rot_{y_0,y_x}$ as the rotation that maps the canonical basis vectors $e_1,\dots,e_d$ of $\mathbb{R}^d$ as follows,
    \begin{align*}
        \rot_{y_0,y_x}(e_1)&= \tfrac{y_x - y_0} {|y_x-y_0|}\\
        \rot_{y_0,y_x}(e_i)&= v_i,\hspace{5pt}i\in\{2,\dots,d\},
    \end{align*}
    where $v_i$ is defined as the orientation of the $(i-1)$st diameter of $B_{T_{v_x}}$, for $i\in\{2,\dots,d\}$. Note that the choice of $(v_i)_{i\in\{2,\dots,d\}}$ is unique up to permutations and mirrorings of the side-lengths of $B_{T_{v_x}}$ (and we pick the first of these according to some arbitrary ordering). 
    We say a vertex $\mathbf{y}\in\mathcal{X}$ with corresponding box that intersects $\mathbf{y}_0$ and $\mathbf{y}_x$ is \emph{good} if it has the following properties.
    \begin{enumerate}
        \item[(C1)] $C_y$ intersects $P_{y_0,y_x}^{v_0}(B_{T_{v_x}})$ and $\tfrac{1}{3}(B_{T_{v_x}}-v_x)+v_x$.
        \item[(C2)] The $k$th side-length of $C_y$ is at least $2(|v_x-v_0|+|y-v_0|)$.
        \item[(C3)] For the location of $\mathbf{y}$ we have
        \begin{align*}
            y\in I_{y_0,y_x}:=\rot_{y_0,y_x}\biggl( \Bigl\{&y=(y^{(1)},\dots,y^{(d)})\in\mathbb{R}^d: y^{(1)}\leq -|x|^{\frac{\min\{d-k,k\}}{\alpha_k-k}-\epsilon},\\
            & y^{(i)}\in\bigl[-c_1(|y^{(1)}|+|v_0-v_x|),c_1(|y^{(1)}|+|v_0-v_x|)\bigr], \\
            &  y^{(j)}\in\bigl[-c_2(|y^{(1)}|+|v_0-v_x|),c_2(|y^{(1)}|+|v_0-v_x|)\bigr],\\
            &\quad \quad \quad \quad \quad \quad  i\in\{2,\dots,k+1\}, j\in\{k+2,\dots,d\}\Bigr\}\biggr) \!+\!v_0,
        \end{align*}
    \end{enumerate}
    where 
    \begin{equation*}
        c_1:=  \frac{\tfrac{1}{3}\cdot2^{-2(d-2)}f_{n_1}}{|v_0-v_x|}
    \end{equation*}
    and
    \begin{equation*}
        c_2: = \frac{\tfrac{1}{3}\cdot2^{-2(d-2)}\epsilon}{|v_0-v_x|}.
    \end{equation*}

    (C1) is chosen such that it implies that $C_y$ intersects with $C_{y_0}$ and $C_{y_x}$. Next, (C2) guarantees that the first $k$ side-lengths of $C_y$ are large enough to result in an intersection with $\tfrac{1}{3}(B_{T_{v_x}}-v_x)+v_x$ when the orientation of the $C_y$ is suitable. (C3) ensures that the argument we will use works for all $k\in\{1,\dots,d-1\}$ and that we do not have to treat the case $k=1$ separately. In particular it, together with (C2), ensures that if $C_y$ intersects $\tfrac{1}{3}(B_{T_{v_x}}-v_x)+v_x$, it must also intersect $P_{y_0,y_x}^{v_0}(B_{T_{v_x}})$. 
    Figure~\ref{fig:connector_of_y0yx} depicts the sets (C1) and (C3) for the $3$ dimensional case and Figure~\ref{fig:c1c2_for_connector} illustrates the intuition behind the constants $c_1$ and $c_2$.  
    \begin{figure}[!t]
        \begin{subfigure}[h]{.495\linewidth}
            \includegraphics[width=\linewidth]{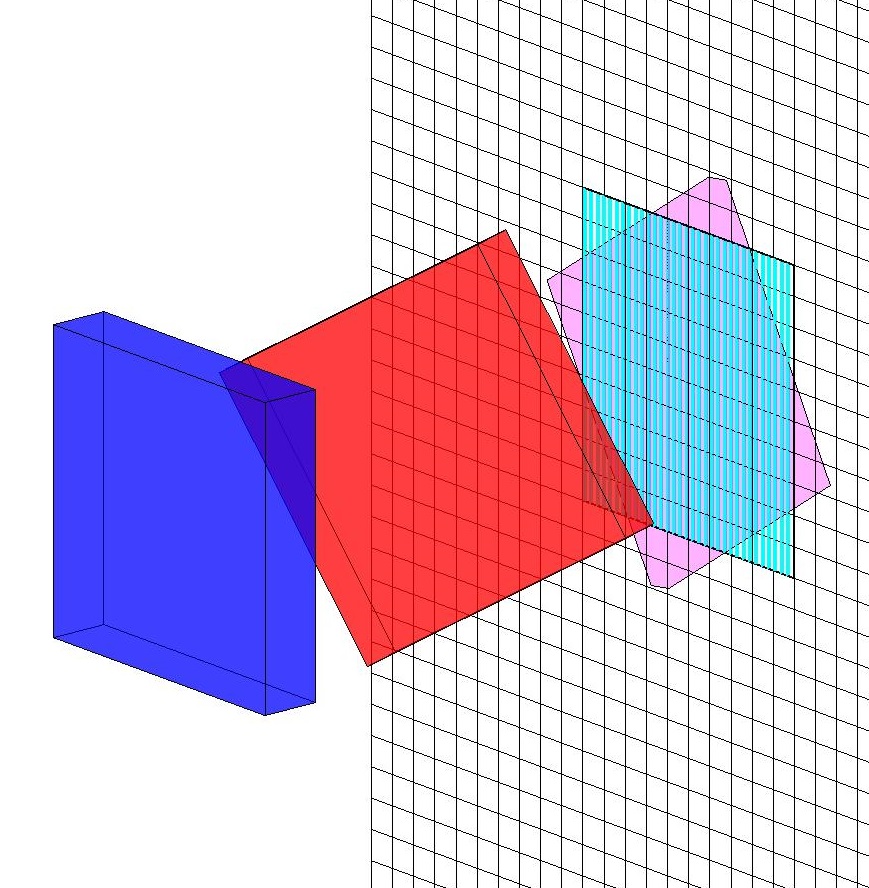}
            \subcaption{The hyperplane perpendicular to $y_0-y_x$ which includes $v_x$ depicted as a grid, $C_{y_0}$ in blue and its orthogonal projection in turquoise, $C_{y_x}$ in red and its orthogonal projection in pink.}
        \end{subfigure}
        \begin{subfigure}[h]{.495\linewidth}
            \includegraphics[width=\linewidth]{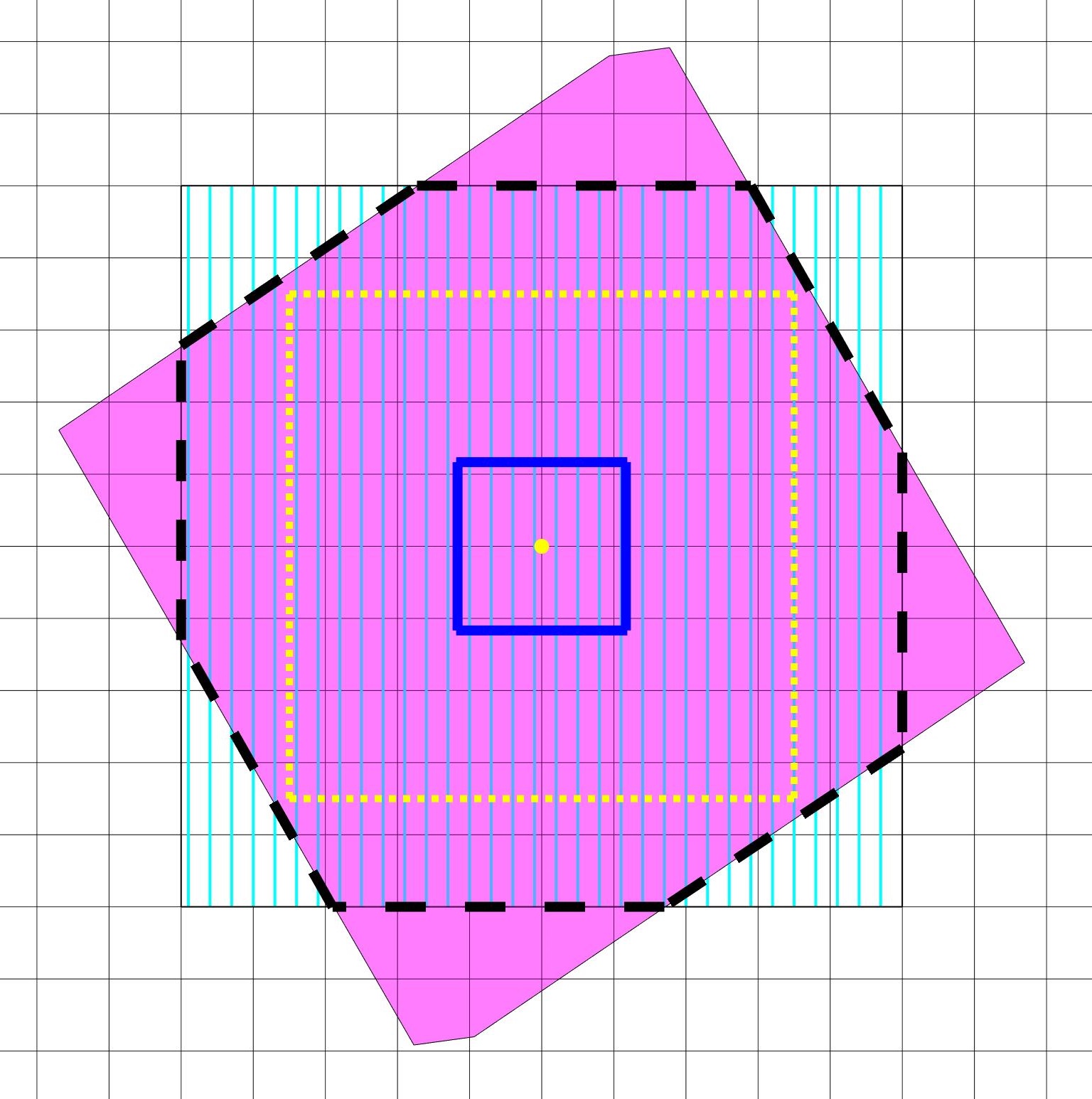}
            \subcaption{Dashed black line the boundary of \smash{$T_{v_x}$}, dotted yellow line the boundary of \smash{$B_{T_{v_x}}$}, in blue the boundary of  \smash{$\tfrac{1}{3}(B_{T_{v_x}}-v_x)+v_x$}, yellow point is \smash{$v_0$}.}
        \end{subfigure}
        \begin{subfigure}[h]{1\linewidth}
            \includegraphics[width=\linewidth]{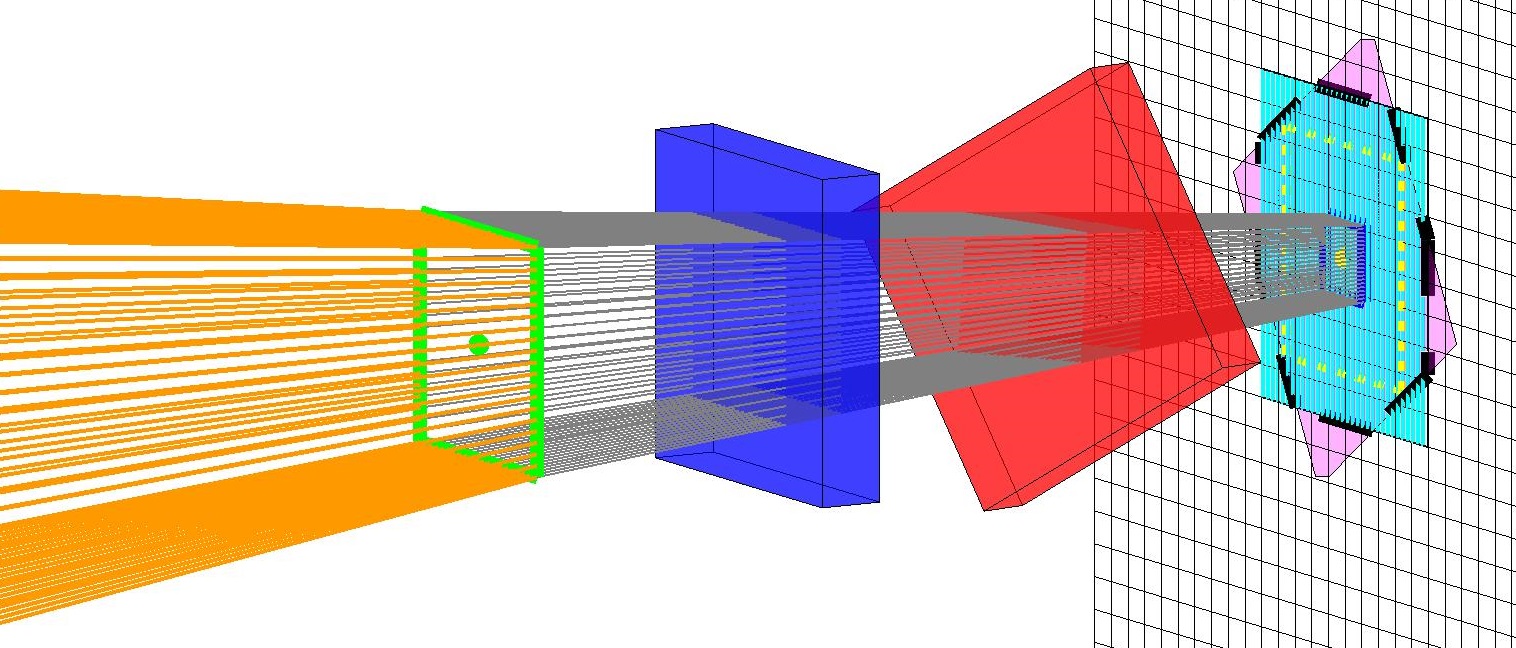}
            \subcaption{Green lines are the boundary of $P_{y_0,y_x}^{v_0}(B_{T_{v_x}})$. The gray lines as the construction to get the allowed positions for the location of $\mathbf{y}$, i.e.\ $I_{y_0,y_x}$, whose boundary is in orange.}
        \end{subfigure}
    \caption{Example in $\mathbb{R}^3$. Visualisation of the possible defined sets in (C1) and (C3) for the connector $\mathbf{y}$ for $\mathbf{y}_0$ and $\mathbf{y}_x$.}\label{fig:connector_of_y0yx}
    \end{figure}

        \begin{figure}[!hb]
        \begin{subfigure}{1\textwidth}\begin{annotate}{
              \includegraphics[width=1\linewidth]{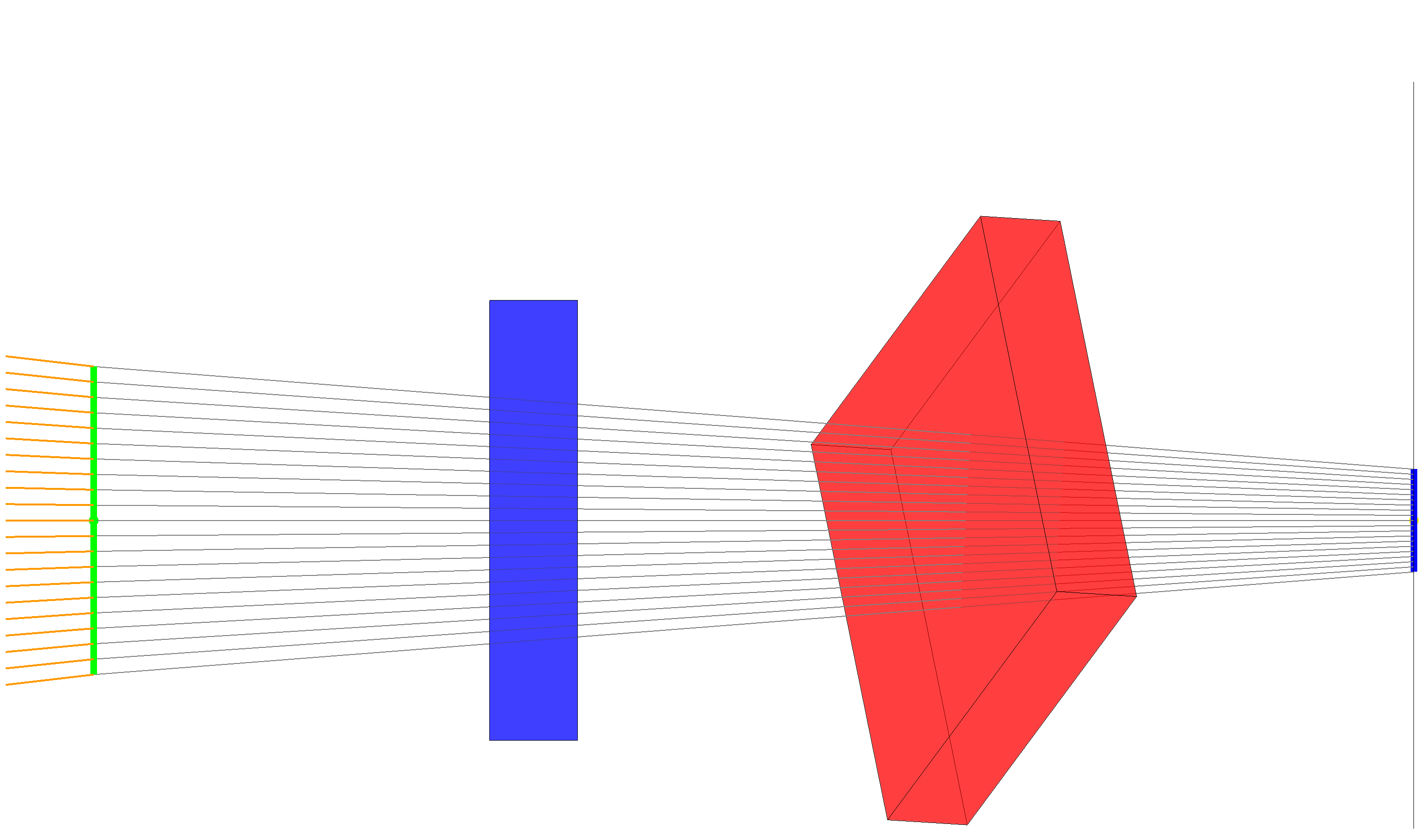}}{1}
              \draw[thick, dashed] (-6.35,0.6) -- (-6.35,3);
              \draw[thick, dashed]  (7.22,-0.45) -- (7.22,3);
              \draw[thick, dashed]  (7.2,-0.52) -- (-6.35,-0.52);
              \draw[thick, dashed]  (7.2,-1.02) -- (-6.35,-1.02);
              \draw[line width=0.8pt, bend left=40] (-6.3, 0.55) to (-6.2,0.3);
              \draw[line width=0.8pt, bend right=40] (-6.2,0.3) to (-6.15, 0.023);
              \draw[line width=0.8pt, bend right=40] (-6.15, 0.027) to (-6.2,-0.3);
              \draw[line width=0.8pt, bend left=40] (-6.2,-0.3) to (-6.3, -0.5);
              \draw[line width=0.8pt, bend left=40] (-6.3, -0.53) to (-6.25,-0.68);
              \draw[line width=0.8pt, bend right=40] (-6.25,-0.67) to (-6.15, -0.765);
              \draw[line width=0.8pt, bend right=40] (-6.15, -0.765) to (-6.25,-0.86);
              \draw[line width=0.8pt, bend left=40] (-6.25,-0.85) to (-6.3, -1);
              \draw[line width=0.8pt, bend right=30] (-6.4, 0.55) to (-6.6,-0.45);
              \draw[line width=0.8pt, bend left=40] (-6.6,-0.45) to (-6.7, -1);
              \draw[line width=0.8pt, bend left=40] (-6.7, -1) to (-6.6,-1.55);
              \draw[line width=0.8pt, bend right=30] (-6.6,-1.55) to (-6.4, -2.55);
              \draw[line width=0.8pt, bend right=20] (7.1, -0.53) to (7.05,-1.15);
              \draw[line width=0.8pt, bend left=50] (7.05,-1.15) to (6.95, -1.25);
              \draw[line width=0.8pt, bend left=40] (6.95, -1.25) to (7.05,-1.35);
              \draw[line width=0.8pt, bend right=40] (7.05,-1.35) to (7.1, -1.57);
              \draw[line width=0.5pt] [->] (-7,3.3) .. controls (-7,-1) .. (-6.75,-1);
              \draw[line width=0.5pt] [->] (-5.7,1.5) .. controls (-5.7,0.02) .. (-6.1,0.02);
              \draw[line width=0.5pt] [->] (-5.7,-3) .. controls (-5.7,-0.755) .. (-6.1,-0.755);
              \draw[line width=0.5pt] [->] (6.3,-3) .. controls (6.3,-1.24) .. (6.9,-1.24);
              \draw[->] (0.8,2.8) -- (7.15,2.8);
              \draw[->] (-0.8,2.8) -- (-6.3,2.8);
              \node at (0,2.8) {$|v_0-v_x|$};
              \node at (-6.6,3.5) {$2^{-2(d-2)}\ell_i$};
              \node at (-3.8,1.7) {$\tfrac{1}{3}\!\cdot\!2^{-2(d-2)}\ell_i = c_i\cdot|v_0-v_x| $};
              \node at (-5.3,-3.3) {$\tfrac{1}{6}\!\cdot\!2^{-2(d-2)}\ell_i$};
              \node at (6.1,-3.3) {$\tfrac{1}{3}\!\cdot\!2^{-2(d-2)}\ell_i$};
            \end{annotate}
            \subcaption{Intuition behind the choice of the constants $c_1$ and $c_2$, using the proportion of the side-length of $\tfrac{1}{3}(B_{T_{v_x}}-v_x)+v_x$ (the blue line on the right side of the picture) and $P_{y_0,y_x}^{v_0}(B_{T_{v_x}})$ (the green line), where $\ell_i=f_{n_1}$ for $i\in\{2,\dots,k+1\}$ and $\ell_i=\epsilon$ for $i>k+1$.}
              \end{subfigure}
        \begin{subfigure}{1\textwidth}\begin{annotate}{
              \includegraphics[width=1\linewidth]{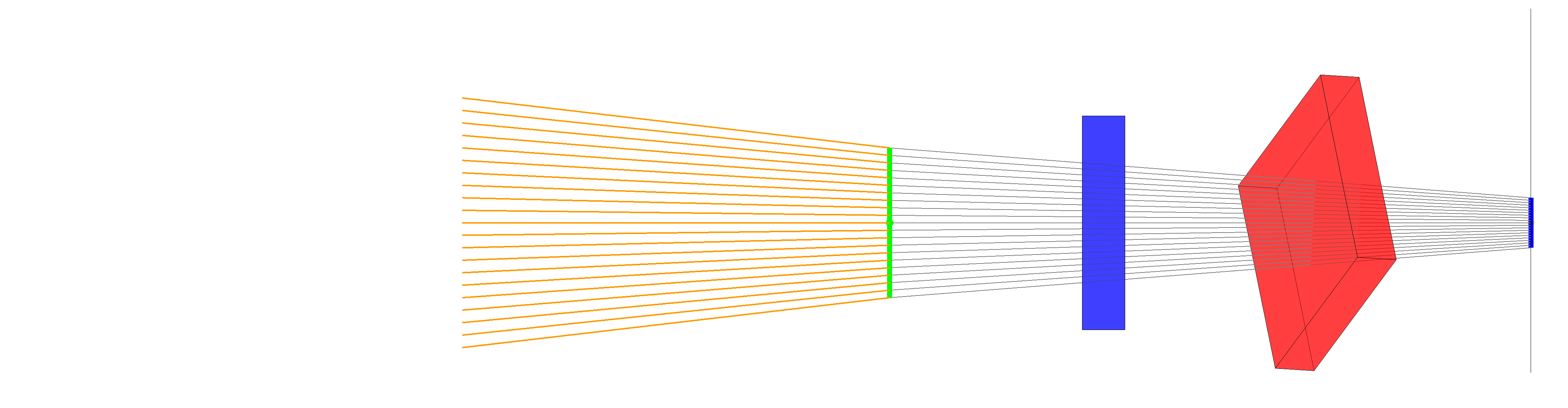}}{1}
              \draw[line width=0.6pt, bend right=20] (-3,1) to (-3.1,0);
              \draw[line width=0.6pt, bend left=30] (-3.1,0) to (-3.25,-0.2);
              \draw[line width=0.6pt, bend left=30] (-3.25,-0.2) to (-3.1,-0.4);
              \draw[line width=0.6pt, bend right=20] (-3.1,-0.4) to (-3,-1.35);
              \draw[<-] (-2.98,-2) -- (0.5,-2);
              \draw[->] (3.5,-2) -- (6.978,-2);
              \draw[thick] (-3,-1.8) -- (-3,-2.2);
              \draw[thick, dashed] (6.98,-0.4) -- (6.98,-2.2);
              \node at (2,-2) {$|y^{(1)}|+|v_0-v_x|$};
              \node at (-5.5,-0.2) {$c_i \cdot (|y^{(1)}|+|v_0-v_x|)  $};
            \end{annotate}
            \subcaption{An illustration of the choice of $y^{(i)}$ in $I_{y_0,y_x}$ for $i\in\{2,\dots,d\}$.}
              \end{subfigure}
         \caption{Illustration of the role of $c_1$ and $c_2$ in $\mathbb{R}^3$. In the above figures, $c_i$ equals $c_1$ for $i\in\{2,\dots,k+1\}$ and equals $c_2$ otherwise. The colour of the lines constructing $I_{y_0,y_x}$ are grey as before; similarly the colours for the boundary of $I_{y_0,y_x}$, $C_{y_0}$, $C_{y_x}$, $P_{y_0,y_x}^{v_0}(B_{T_{v_x}})$ and $\tfrac{1}{3}(B_{T_{v_x}}-v_x)+v_x$ are the same as in Figure \ref{fig:connector_of_y0yx}.}
              \label{fig:c1c2_for_connector}
\end{figure}
    To keep notation in the following calculations concise, we write $EF_{y_0,y_x}^{z_0,z_x}$ instead of $E_2^{(y_0,y_x)}  \cap F_{n_1}^{(0,z_0,y_0)}\cap F_{n_1}^{(x,z_x,y_x)}\cap E_{0,z_0}^b\cap E_{x,z_x}^b$. 
    \begin{align*}
    & \mathbb{P}_{0,x,z_0,z_x,y_0,y_x}\bigl(E_1^{(y_0,y_x)} \,|\, E_2^{(y_0,y_x)}  \cap F_{n_1}^{(0,z_0,y_0)}\cap F_{n_1}^{(x,z_x,y_x)}\cap E_{0,z_0}^b\cap E_{x,z_x}^b\bigr)\\
     &\leq \mathbb{E}_{0,x,z_0,z_x,y_0,y_x}\Big[  \mathbb{P}_{0,x,z_0,z_x,y_0,y_x}\big[ \text{ there exists no }  \textbf{y} \in \mathcal{X} \mbox{ such that } y\in B_{|x|^{\frac{\min\{d-k,k\}}{\alpha_k-k}-\epsilon}}(0)^c\!\!:\\
       &\hspace{83pt} C_y\cap C_{y_0} \neq \emptyset \text{ and }C_y\cap C_{y_x} \neq \emptyset
      \,\mid\, \mathfrak F_n^x \big]\mathbbm{1}_{ EF_{y_0,y_x}^{z_0,z_x}}\Big]\big/{\mathbb{P}_{0,x,z_0,z_x,y_0,y_x}}(EF_{y_0,y_x}^{z_0,z_x})\\
      &\leq \mathbb{E}_{0,x,z_0,z_x,y_0,y_x}\Big[  \mathbb{P}_{0,x,z_0,z_x,y_0,y_x}\big[ \text{ there exists no }  \textbf{y} \in \mathcal{X} \mbox{ such that } y\in I_{y_0,y_x}:\\
       &\hspace{85pt} C_y\cap C_{y_0} \neq \emptyset \text{ and }C_y\cap C_{y_x} \neq \emptyset
      \,\mid\, \mathfrak F_n^x \big]\mathbbm{1}_{ EF_{y_0,y_x}^{z_0,z_x}}\Big]\big/\mathbb{P}_{0,x,z_0,z_x,y_0,y_x}( EF_{y_0,y_x}^{z_0,z_x})\\
      &= \mathbb{E}_{0,x,z_0,z_x,y_0,y_x}\Big[\exp\Big(-u\!\!\!\!\smash{\int\limits_{I_{y_0,y_x} }}\!\! \mathbb{P}_C\big(C_y\cap C_{y_0} \neq \emptyset \text{ and }C_y\cap C_{y_x} \neq \emptyset\big)\,\text{d}\lambda (y)\Big)\mathbbm{1}_{EF_{y_0,y_x}^{z_0,z_x}} \Big]\\
      &\pushright{\big/\mathbb{P}_{0,x,z_0,z_x,y_0,y_x}(EF_{y_0,y_x}^{z_0,z_x}).}
    \end{align*}
    The last inequality follows since $I_{y_0,y_x}\subseteq B_{|x|^{\frac{\min\{d-k,k\}}{\alpha_k-k}-\epsilon}}(0)^c$.
    We can continue bounding the previous expression from above by
    \begin{align*}
        &\mathbb{E}_{0,x,z_0,z_x,y_0,y_x}\Bigl[\exp\Bigl(-u\!\!\!\!\smash{\int\limits_{I_{y_0,y_x} }} \mathbb{P}_C\bigl( C_y\cap \bigl(\tfrac{1}{3}(B_{T_{v_{x}}}-v_x)+v_x\bigr) \neq \emptyset\text{ and } \\
        &\hspace{125pt} D_y^{(k)}\geq 2(|v_x-v_0|+|y-v_0|)\bigr)\text{d}\lambda (y)\Bigr)\mathbbm{1}_{EF_{y_0,y_x}^{z_0,z_x}} \Bigr]\\
        &\pushright{\big/{\mathbb {P}_{0,x,z_0,z_x,y_0,y_x}}(EF_{y_0,y_x}^{z_0,z_x})}\\
        &=\mathbb{E}_{0,x,z_0,z_x,y_0,y_x}\Bigl[\exp\Bigl(-u\!\!\!\!\smash{\int\limits_{I_{y_0,y_x} }} \mathbb{P}_C(C_y\cap \tfrac{1}{3}(B_{T_{v_{x}}}-v_x)+v_x) \neq \emptyset \\
        &\hspace{170pt}\big|\, D_y^{(k)}\geq 2 (|v_x-v_0|+|y-v_0|)\bigr)\\
        &\hspace{160pt}\times\mathbb{P}\bigl(D_y^{(k)}\geq 2(|v_x-v_0|+|y-v_0|)\bigr)\,\text{d}\lambda (y)\Bigr)\mathbbm{1}_{EF_{y_0,y_x}^{z_0,z_x}} \Bigr]\\
        &\pushright{\big/{\mathbb{P}_{0,x,z_0,z_x,y_0,y_x}}(EF_{y_0,y_x}^{z_0,z_x}).}
    \end{align*} 
    In order to bound the integrand we use translation and rotation invariance of the Lebesgue measure, Potter bounds and the size of the set of orientations of $C_y$ that results in an intersection with $\tfrac{1}{3}(B_{T_{v_{x}}}-v_x)+v_x) $ which yields
    \begin{align*}
        \mathbb{P}_C(\rot_{y_0,y_x}^{-1}(C_y-v_0)\cap  \rot_{y_0,y_x}^{-1}\bigl(\tfrac{1}{3}&(B_{T_{v_{x}}}-v_x)+v_x-v_0\bigr)  \neq \emptyset \big|\, D_y^{(k)}\geq 2(|v_x-v_0|+|y|)\bigr)\\
        &\times\mathbb{P}\bigl(D_y^{(k)}\geq 2 (|v_x-v_0|+|y|)\bigr)\\
        &\quad\quad\geq c  \frac{f_{n_1}^{\min\{d-k,k\}}}{(|v_x-v_0|+|y|)^{d-k}}\bigl( |v_x-v_0|+|y|\bigr)^{-\alpha_k-\varepsilon}
    \end{align*}
    and use this lower bound going forward. Using now that $|v_x-v_0|\in(|x|/4,2|x|)$, ${|y|>cf_{n_1+1}}$, $|x|/8=f_{n_1}$ and the norm equivalence in $\mathbb{R}^d$  we can bound the last term from below by
    \begin{equation*}
        c\frac{|x|^{\min\{d-k,k\}}}{|y|_1^{d-k}}|y|_1^{-\alpha_k-\varepsilon}= c|x|^{\min\{d-k,k\}}|y|_1^{-\alpha_k-\varepsilon-d+k}
    \end{equation*}
    where $|\cdot|_1$ is the $1$-norm. Putting it all together, we can therefore bound 
    \begin{equation*}
        \mathbb{P}_{0,x,z_0,z_x,y_0,y_x}\bigl(E_1^{(y_0,y_x)} \,|\, E_2^{(y_0,y_x)}  \cap F_{n_1}^{(0,z_0,y_0)}\cap F_{n_1}^{(x,z_x,y_x)}\cap E_{0,z_0}^b\cap E_{x,z_x}^b\bigr)
    \end{equation*}
    from above by
    \begin{equation}\label{eq:upperbound_no_good_connector1}
        \begin{split}\mathbb{E}_{0,x,z_0,z_x,y_0,y_x}&\Bigl[\exp\Bigl(-uc\!\!\smash{\int\limits_{I_{0,x} }}|x|^{\min\{d-k,k\}}|y|_1^{-\alpha_k-\varepsilon-d+k}\,\text{d}\lambda (x)\Bigr)\mathbbm{1}_{E_{y_0,y_x}^{z_0,z_x}} \Bigr]\\
        &\hspace{180pt}\big/{\mathbb P_{0,x,z_0,z_x,y_0,y_x}}(E_{y_0,y_x}^{z_0,z_x}).
        \end{split}
    \end{equation}
    We focus now at the integral and get by using the definition of $I_{0,x}$ 
    \begin{align*}
        &\int\limits_{I_{0,x}} |x|^{\min\{d-k,k\}}|y|_1^{-\alpha_k-\varepsilon-d+k}\,\text{d}\lambda (x) \\
        &\geq\!\! \int_{|x|^{\frac{\min\{d-k,k\}}{\alpha_k-k}-\epsilon}}^{\infty}\text{d}y^{(1)}\!\!\int_{-c_1(y^{(1)}+|v_0-v_x|)}^{c_1(y^{(1)}+|v_0-v_x|)}\!\!\!\!\text{d}y^{(2)}\!\dots\!\!\int_{-c_1(y^{(1)}+|v_0-v_x|)}^{c_1(y^{(1)}+|v_0-v_x|)}\!\!\!\! \text{d}y^{(k+1)}\!\!\int_{-c_2(y^{(1)}+|v_0-v_x|)}^{c_2(y^{(1)}+|v_0-v_x|)}\!\!\!\!\text{d}y^{(k+2)}\\
        &\hspace{130pt}\dots\int_{-c_2(y^{(1)}+|v_0-v_x|)}^ {c_2(y^{(1)}+|v_0-v_x|)}     
         |x|^{\min\{d-k,k\}}|y|_1^{-\alpha_k-\varepsilon-d+k} \text{d}y^{(d)},
    \end{align*}
    where we highlight the change of integration range when going from $k+1$ to $k+2$.
    Using $y^{(1)}+|v_0-v_x|\geq \tfrac{1}{2}y^{(1)}$ we can bound this from below by
    \begin{align}
        &\int_{|x|^{\frac{\min\{d-k,k\}}{\alpha_k-k}-\epsilon}}^{\infty}\text{d}y^{(1)}\int_{-\frac{c_1}{2}y^{(1)})}^{\frac{c_1}{2}y^{(1)}}\text{d}y^{(2)}\dots\int_{-\frac{c_1}{2}y^{(1)})}^{\frac{c_1}{2}y^{(1)}}\text{d}y^{(k+1)}\int_{-\frac{c_2}{2}y^{(1)}}^{\frac{c_2}{2}y^{(1)}}\text{d}y^{(k+2)}\nonumber\\
        &\hspace{180pt}\dots\int_{-\frac{c_2}{2}y^{(1)}}^ {\frac{c_2}{2}y^{(1)}} |x|^{\min\{d-k,k\}}|y|_1^{-\alpha_k-\varepsilon-d+k}\text{d}y^{(d)} \nonumber\\ 
        &\geq c \int_{|x|^{\frac{\min\{d-k,k\}}{\alpha_k-k}-\epsilon}}^{\infty} |x|^{\min\{d-k,k\}}(|y^{(1)}|+k|\tfrac{c_1}{2}y^{(1)}|+(d-k-1)|\tfrac{c_2}{2}y^{(1)}|)^{-\alpha_k-\varepsilon+k-1}\text{d}y^{(1)}\nonumber\\      
        &\geq c\int_{|x|^{\frac{\min\{d-k,k\}}{\alpha_k-k}-\epsilon}}^{\infty} |x|^{\min\{d-k,k\}}|y^{(1)}|^{-\alpha_k-\varepsilon+k-1} \text{d}y^{(1)}\nonumber\\  
        &\geq c|x|^{\min\{d-k,k\}}\Bigl(|x|^{\frac{\min\{d-k,k\}}{\alpha-k}-\epsilon}\Bigr)^{-\alpha_k-\varepsilon+k}.\label{eq:upperbound_no_good_connector2}
    \end{align}
    In the first inequality we integrated over the last $d-1$ coordinates of $y$. The second inequality is given by using definitions of $c_1$ and $c_2$. Integrating over the first coordinate of $y$ then gives the final inequality.

    Continuing the calculation of \eqref{eq:upperbound_no_good_connector1}, substituting in the bound from \eqref{eq:upperbound_no_good_connector2}, we get
    \begin{align*}
        \mathbb{E}_{0,x,z_0,z_x,y_0,y_x}\Bigl[&\exp\Bigl(-uc|x|^{\min\{d-k,k\}}\Bigl(|x|^{\frac{\min\{d-k,k\}}{\alpha-k}-\epsilon}\Bigr)^{-\alpha_k-\varepsilon+k}\Bigr)\mathbbm{1}_{E_{y_0,y_x}^{z_0,z_x}} \Bigr]\\
        &\pushright{\big/{\mathbb P_{0,x,z_0,z_x,y_0,y_x}}(E_{y_0,y_x}^{z_0,z_x})} \\
        &\leq \exp\Bigl(-uc|x|^{-\varepsilon\frac{\min\{d-k,k\}}{\alpha_k-k}+\epsilon(\alpha_k +\varepsilon -k) }\Bigr)\\
        &\leq  \exp\bigl(  -u c |x|^{\epsilon(\alpha_k -k)/2}\bigr),
    \end{align*}
    where the last inequality follows using that for $0<\varepsilon<(\alpha_k-k)^2\epsilon\big/\big(2\min\{d-k,k\}\big)$ (recall that we can choose $\varepsilon$ appearing in the Potter bounds arbitrarily small) we have
    \begin{equation*}
        -\varepsilon\frac{\min\{d-k,k\}}{\alpha_k-k}+\epsilon(\alpha_k +\varepsilon -k) > \frac{\epsilon}{2}(\alpha_k-k) >0.
    \end{equation*}
\end{proof}
With this, we have proven that the first term of \eqref{eq:upperbound_probability_no_connector} converges to zero as $|x|$ becomes large. It remains to prove the same for the second term, which we do with the following lemma.
\begin{lemma}\label{Lemma:four}
    We have
    \begin{equation*}
         \mathbb{P}_{0,x,z_0,z_x,y_0,y_x}\bigl( E_3^{(y_0,y_x)}  \cap F_{n_1}^{(0,z_0,y_0)}\cap F_{n_1}^{(x,z_x,y_x)}\cap E_{0,z_0}^b\cap E_{x,z_x}^b\bigr) \leq \exp(- uc |x|^{\epsilon(\alpha_k-k)-\delta})
    \end{equation*}
    for $0<\delta<\epsilon(\alpha_k-k)$ and $|x|$ large enough. 
\end{lemma}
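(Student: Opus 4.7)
} My strategy is to mimic the void-probability computation used in the proof of Lemma \ref{Lemma:three}, but now applied to the existence of \emph{suitable alternative end vertices} for the paths $\bar A_{n_1}^{z_0}$ and $\bar A_{n_1}^{z_x}$ rather than to the existence of a connector between fixed $\mathbf{y}_0$ and $\mathbf{y}_x$. Since the distance condition $|y_0-y_x|\geq 2f_{n_1}$ is automatic by construction, the event $E_3^{(y_0,y_x)}$ decomposes into the two sub-cases: (I) (G1) fails, i.e.\ some $p_{y_0}^{(i)}$ and $p_{y_x}^{(i)}$ differ by more than $\epsilon/\log|x|$; (II) the angular condition of (G2) fails, i.e.\ the vector $y_0-y_x$ lies within angle $\varphi = 2^{-k}\pi$ of some $\pm p_{y_0}^{(j)}$ or $\pm p_{y_x}^{(j)}$ with $j\leq k$. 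Each sub-case is treated separately.

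First, I would condition on the $\sigma$-algebra $\mathfrak F_{n_1}^x$, under which the events $F_{n_1}^{(0,z_0,y_0)}$, $F_{n_1}^{(x,z_x,y_x)}$, $E_{0,z_0}^b$, $E_{x,z_x}^b$ are all determined, together with the marks of $\mathbf{y}_0$ and $\mathbf{y}_x$. Then, using a further sprinkling of the remaining Poisson points outside $B_{3f_{n_1}}(0)\cup B_{3f_{n_1}}(x)$ into independent colours (analogous to the $b,r$ split in the upper bound proof), I would exhibit a reservoir of ``fresh'' vertices independent of the already-exposed paths, each of which could serve as an alternative $n_1$-th step in one of the two paths, replacing $\mathbf{y}_0$ (resp.\ $\mathbf{y}_x$). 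The joint probability of interest is then bounded by the Poisson void probability that no alternative pair among these candidates restores the ``good'' property, an expression of the form $\exp\bigl(-u \int_R \mathbb P_C(\text{suitable mark})\,d\lambda(v)\bigr)$, with $R$ a region constructed in the spirit of the $I_{y_0,y_x}$ of Lemma \ref{Lemma:three}.

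The per-candidate probability combines two contributions: the rotation-invariance factor for the orientation landing in the $\epsilon/\log|x|$-cones aligned with the opposite end vertex's orientations, which is of order $(\log|x|)^{-c(d)}$, and the Potter lower bound $\mathbb P_C(D_v^{(k)}\geq cf_{n_1})\geq cf_{n_1}^{-(\alpha_k+\varepsilon)}$ for the $k$-th diameter. Integrating over a spatial region of volume at least $|x|^{\min\{d-k,k\}+\epsilon(\alpha_k-k)}$, mirroring the bound obtained in \eqref{eq:upperbound_no_good_connector2}, one arrives at a void-probability exponent of order $|x|^{\epsilon(\alpha_k-k)}(\log|x|)^{-c(d)}$. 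The main obstacle is absorbing the polylogarithmic orientation-matching cost into the exponent of $|x|$: since $(\log|x|)^{c(d)}\leq|x|^{\delta}$ for any $\delta>0$ and $|x|$ large, and since the Potter $\varepsilon$ can be taken arbitrarily small, the loss is absorbed into the slight weakening of the exponent from $\epsilon(\alpha_k-k)$ to $\epsilon(\alpha_k-k)-\delta$. A careful choice of $R$ and $\varepsilon$, imitating the calculation from \eqref{eq:upperbound_no_good_connector1}--\eqref{eq:upperbound_no_good_connector2}, then completes the argument.
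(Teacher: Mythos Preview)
Your high-level strategy---reduce to a Poisson void probability for the existence of a \emph{pair} of suitable $n_1$-th step vertices whose orientations match to within $\epsilon/\log|x|$, and absorb the polylogarithmic matching cost into $|x|^{-\delta}$---is exactly the paper's idea. The execution, however, has a structural error that makes the argument collapse.

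You condition on $\mathfrak F_{n_1}^x$ and then search for alternative $n_1$-th step candidates among the ``fresh'' Poisson points \emph{outside} $B_{3f_{n_1}}(0)\cup B_{3f_{n_1}}(x)$. But by the definition of the events $\bar A_{n_1}^{z_0}$, any admissible $n_1$-th step $\mathbf{y}_0$ must have $y_0\in O_{n_1}(\mathbf{x}_{n_1-1})$, forcing $|y_0-x_{n_1-1}|\leq f_{n_1}$ and hence $y_0\in B_{3f_{n_1}}(0)$; similarly $y_x\in B_{3f_{n_1}}(x)$. So the region you declare ``fresh'' contains no candidates at all. The paper avoids this by stepping back one level: it conditions on the \emph{smaller} $\sigma$-algebra $\mathfrak F_{n_1-1}^x$, so that the annuli $O_0=B_{3|x|/8}(0)\setminus B_{|x|/8}(0)$ and $O_x=B_{3|x|/8}(x)\setminus B_{|x|/8}(x)$ remain unexplored Poisson regions in which to look for the pair $(\mathbf v,\mathbf w)$.

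This also changes the arithmetic. The paper's per-candidate probability is not just a diameter tail $f_{n_1}^{-(\alpha_k+\varepsilon)}$ together with the orientation-match factor; each candidate must also \emph{extend the path} from step $n_1-1$, which by \eqref{eq:for_thinning} contributes an additional factor $f_{n_1-1}^{\min\{d-k,k\}}/f_{n_1}^{d-k}$. One then integrates over $O_0\times (O_x\cap A(w,n_1))$, giving volume of order $f_{n_1}^{2d}$, and multiplies by the orientation-match factor $(\log|x|)^{-(d-1)!}$. The product
\[
f_{n_1}^{2d}\cdot\Bigl(\tfrac{f_{n_1-1}^{\min\{d-k,k\}}}{f_{n_1}^{d-k}}f_{n_1}^{-(\alpha_k+\varepsilon)}\Bigr)^{\!2}\cdot(\log|x|)^{-(d-1)!}
\]
is what simplifies to $c\,|x|^{\epsilon(\alpha_k-k)-\delta}$; your claimed integration-region volume $|x|^{\min\{d-k,k\}+\epsilon(\alpha_k-k)}$ and single-candidate thinning do not reproduce this. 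Once you fix the filtration to $\mathfrak F_{n_1-1}^x$, search inside $O_0\times O_x$, and include the path-extension factor twice, your outline becomes the paper's proof; the separate treatment of the (G1)/(G2) failure modes is then unnecessary, since restricting $v$ to $O_x\cap A(w,n_1)$ already enforces (G2).
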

\begin{proof}
    Let $\tilde{E}_3^{(y_0,y_x)}$ be the event defined in such a way that
    \begin{equation*}
        \tilde E_3^{(y_0,y_x)}  \cap F_{n_1}^{(0,z_0,y_0)}\cap F_{n_1}^{(x,z_x,y_x)}\cap E_{0,z_0}^b\cap E_{x,z_x}^b= E_3^{(y_0,y_x)}  \cap F_{n_1-1}^{(0,z_0)}\cap F_{n_1-1}^{(x,z_x)}\cap E_{0,z_0}^b\cap E_{x,z_x}^b 
    \end{equation*}
    holds, i.e.\ the vertices $\mathbf{y}_0$ and $\mathbf{y}_x$ are good with respect to each other and  $\mathbf{y}_0$ (resp.\ $\mathbf{y}_x$) can be used to extend the path of length $n_1-1$ starting in $\mathbf{z}_0$ (resp.\ $\mathbf{z}_x$) so that the extended path implies the event $F_{n_1}^{(0,z_0,y_0)}$ (resp.\ $F_{n_1}^{(x,z_x,y_x)}$). We have
    \begin{align*}
        \mathbb{P}_{0,x,z_0,z_x,y_0,y_x}&\bigl(E_3^{(y_0,y_x)}  \cap F_{n_1}^{(0,z_0,y_0)}\cap F_{n_1}^{(x,z_x,y_x)}\cap E_{0,z_0}^b\cap E_{x,z_x}^b\bigr) \\&=\mathbb{P}_{0,x,z_0,z_x,y_0,y_x}\bigl( \tilde E_3^{(y_0,y_x)}  \cap F_{n_1-1}^{(0,z_0)}\cap F_{n_1-1}^{(x,z_x)}\cap E_{0,z_0}^b\cap E_{x,z_x}^b\bigr)\\
        &\leq \mathbb{P}_{0,x,z_0,z_x,y_0,y_x}\bigl(\tilde E_3^{(y_0,y_x)}  \,|\, F_{n_1-1}^{(0,z_0)}\cap F_{n_1-1}^{(x,z_x)}\cap E_{0,z_0}^b\cap E_{x,z_x}^b\bigr).
    \end{align*}
    To bound this probability from above recall that in general, for vertices of paths we consider, for every $n\in\mathbb{N}$ and suitable $z\in \mathbb{R}^d$, i.e.\ $z\in O_{n+1}(\mathbf{x}_{n})$,  using a similar argument as for \eqref{eq:TMM_key_step}, we can obtain the following inequality 
    \begin{equation}\label{eq:for_thinning}
        \begin{split}
        \mathbb{P}_C\bigl(  (z+C)\cap   Q^*_{f_n}(x_n,x)\neq \emptyset\,|\, D_{C}^{_{(k)}} \geq 2^{2(d-1){+\epsilon}}f_{_{n+1}} \bigr)  &\mathbb{P}_C(D_{C}^{_{(k)}} \geq2^{2(d-1){+\epsilon}}f_{_{n+1}})\\
        &\geq c \frac{f_n^{\min\{d-k,k\}}}{f_{n+1}^{d-k}} f_{n+1}^{-\alpha_k+\varepsilon}.
        \end{split}
    \end{equation}
    Setting $n=n_1-1$, the right hand side of this inequality is a lower bound for the probability that a vertex with given location in $O_{n+1}(\mathbf{x}_{n})$ is the $n_1$th vertex in the growing path as required for $\mathbf{y}_0$ (resp.\ $\mathbf{y}_x$) to satisfy $F_{n_1}^{(0,z_0,y_0)}$ (resp.\ $F_{n_1}^{(x,z_x,y_x)}$) under the condition $ F_{n_1-1}^{(0,z_0)}\cap F_{n_1-1}^{(x,z_x)}\cap E_{0,z_0}^b\cap E_{x,z_x}^b$. In addition to that note that $\mathbf{y}_0$ (resp.\ $\mathbf{y}_x$) has location in $O_0:=B_{3|x|/8}(0)\setminus B_{|x|/8}(0)$ (resp.\ $O_x:=B_{3|x|/8}(x)\setminus B_{|x|/8}(x)$). We therefore have that $y_0$ and $y_x$ lie in disjoint areas. 
    
    Writing from here on $ F_{n_1-1}^{(0,z_0)}\cap F_{n_1-1}^{(x,z_x)}\cap E_{0,z_0}^b\cap E_{x,z_x}^b$ as $G_{n_1-1}^{(0,x,z_0,z_x)}$ and using properties of the Poisson point process we calculate 
    \begin{align*}
        &\mathbb{P}_{0,x,z_0,z_x,y_0,y_x}\bigl(E_3^{(y_0,y_x)}  \,|\, F_{n_1}^{(0,z_0,y_0)}\cap F_{n_1}^{(x,z_x,y_x)}\cap E_{0,z_0}^b\cap E_{x,z_x}^b\bigr) \\ 
        &= \mathbb{E}_{0,x,z_0,z_x,y_0,y_x}\Big[  \mathbb{P}_{0,x,z_0,z_x,y_0,y_x}\bigl[ \not\exists  \mathbf{v},\mathbf{w} \in \mathcal{X} \text{ that extend } F_{n_1-1}^{(0,z_0)} \text{ and } F_{n_1-1}^{(x,z_x)}\text {such that } \mathbf{v}\\
       &\hspace{110pt} \text{ and } \mathbf{w} \text{ are good with respect } \text{to each other.}\,|\, \mathfrak F_{n_1-1}^x \bigr]\mathbbm{1}_{ G_{n_1-1}^{(0,x,z_0,z_x)}}\Big]\\
       &\pushright{\big/{\mathbb{P}_{0,x,z_0,z_x,y_0,y_x}}(G_{n_1-1}^{(0,x,z_0,z_x)})}\\
      &\leq \mathbb{E}_{0,x,z_0,z_x,y_0,y_x}\Bigl[ \exp\Bigl( -uc \!\!\!\ \int\limits_S\!\! \text{d}\mathbb{P}_{p_{v}} \!\!\int\limits_{O_0}\!\!\text{d}\lambda(w)\!\!\!\!\!\!\! \!\!\!\!\int\limits_{O_x\cap A(w,n_1)} \!\!\!\!\!\!\!\!\!\!\! \mathbb{P}_{v,w,p_v} \bigl( \measuredangle(p_{w}^{(i)},p_{v}^{(i)})\leq \tfrac{\epsilon}{\log|x|},\,\forall i\in\{1,\dots,d\} \bigr)\\
      &\pushright{\!\!\!\smash{\frac{f_{n_1-1}^{2\min\{d-k,k\}}}{f_{n_1}^{2(d-k)}} f_{n_1}^{-2(\alpha_k-\varepsilon)}} \text{d} \lambda(v)
         \Bigr)\mathbbm{1}_{ G_{n_1-1}^{(0,x,z_0,z_x)}}\Bigr]}\\
    &\pushright{\big/{\mathbb{P}_{0,x,z_0,z_x,y_0,y_x}}(G_{n_1-1}^{(0,x,z_0,z_x)}),}
    \end{align*}
    where $S$ is the set of all possible orientations of $C$ (see below for a more precise definition) and we used \eqref{eq:for_thinning} to thin out the Poisson point process in the inequality. Note that the bound in \eqref{eq:for_thinning} only holds for pairs of vertices for which $v\in O_{n_1}(\mathbf{x}_{n_1-1})$ (and similarly for $w$). Since we are restricting ourselves to $O_0$ and $O_x$, this assumption is satisfied. In particular, since $O_0$ and $O_x$ are bounded, there exists a constant $c>0$ for which \eqref{eq:for_thinning} holds for all vertices with locations inside $O_0$ and $O_x$. Furthermore, in line with our notation, $p_v$ in the index of $\mathbb{P}$ signifies that we conditioned on $p_v$.
    
    To continue, define 
    \begin{equation*}
        S:=\bigtimes_{i=1}^{d-1}\mathbb{S}^{d-i}
    \end{equation*}
    to be the set of all possible orientations of $C$. Here, $\mathbb{S}^{d-i}$ is the set of possible orientations for $p_C^{(i)}$, which is the orientation of the $i$th side of $C$, for $i\in\{1,\dots,d-1\}$, relative to all preceding orientations. Note that the orientation of $p_C^{(d)}$ is completely determined (up to mirroring) when the orientations of $p_C^{(1)},\dots,p_C^{(d-1)}$ are known; also observe that given all preceding orientations, $p_C^{(i)}$ is distributed uniformly on $\mathbb{S}^{d-i}$. We write $\mathbb{P}_{p_C}$ for the law of the vector $(p_C^{(1)},\dots,p_C^{(d)})$ and $\mathbb{P}_{p_v}$ when $C$ is attached to a specific location $v\in\mathscr{P}$.

    To bound $\mathbb{P}_{0,x}\bigl( \tilde E_3\, | \,F_{n_1-1}^{(0)}\cap F_{n_1-1}^{(x)}\cap E_0^b\cap E_x^b\bigr) $ from above we calculate
    \begin{align*}
        &\smash{\int\limits_S\text{d} \mathbb{P}_{p_{v}} \int\limits_{O_0}\text{d}\lambda(w)\!\!\!\!\!\!\!\!\!\!\!\!\int\limits_{O_x\cap A(w,n_1)}} \!\!\!\!\!\! \frac{f_{n_1-1}^{2\min\{d-k,k\}}}{f_{n_1}^{2(d-k)}} f_{n_1}^{-2(\alpha_k-\varepsilon)} \\
       &\hspace{120pt}\times\mathbb{P}_{v,w,p_v} \bigl( \measuredangle(p_{w}^{(i)},p_{v}^{(i)})\leq \tfrac{\epsilon}{\log|x|},\,\forall i\in\{1,\dots,d\} \bigr)\text{d} \lambda(v)   \\
       &\leq  c\int\limits_S\text{d} \mathbb{P}_{p_{v}} \int\limits_{O_0}\text{d}\lambda(w)\!\!\!\!\!\!\!\!\!\int\limits_{O_x\cap A(w,n_1)} \!\!\!\!\!\! \frac{f_{n_1-1}^{2\min\{d-k,k\}}}{f_{n_1}^{2(d-k)}} f_{n_1}^{-2(\alpha_k-\varepsilon)}\log(|x|)^{-(d-1)!}\text{d} \lambda(v)  \\
       &\leq cf_{n_1}^{2d}\frac{f_{n_1-1}^{2\min\{d-k,k\}}}{f_{n_1}^{2(d-k)}} f_{n_1}^{-2(\alpha_k-\varepsilon)} \log(|x|)^{-(d-1)!}.
    \end{align*}
    The first inequality holds since the set of allowed orientations for $p_{w}^{(i)}$ for fixed $p_{v}^{(i)}$ has volume of order $\log(|x|)^{-(d-1)!}$, which can be shown by induction. Intuitively, looking at the sets that are given for the orientations of the sides of $C$, we first consider the longest side and determine the rotational area which is allowed so that it has suitable orientation; this is a set of size proportional to $\log(|x|)^{-d+1}$. Fixing this first orientation, it remains to consider the remaining $d-1$ sides. For the orientation of the second side we have a restriction of the rotational area which is proportional to $\log(|x|)^{-(d-2)}$ and so on, until we are looking at the second to last side, where we have a rotational area proportional to $\log(|x|)^{-1}$ left. 
    Altogether this leads to the exponent $-(d-1)!$. For the final inequality above we use that the volume of $O_0$ resp.\ $O_x\cap A(w,n_1)$ for fixed $w\in\mathbb{R}^d$ is of order $f_{n_1}^d$ and that $\mathbb{P}_{p_v}$ is uniform on $S$.  

    Using $0<\varepsilon<(\alpha_k-k)^2\epsilon\big/\big(2\min\{d-k,k\}\big)$ as at the end of the proof of Lemma~\ref{Lemma:three}, the last expression can be bounded from below by
    \begin{equation*}
        f_{n_1}^{\epsilon(\alpha_k-k)} \log(|x|)^{-(d-1)!}\geq c|x|^{\epsilon(\alpha_k-k)-\delta}, 
    \end{equation*}
    with $\delta>0$ small enough such for the exponent of $|x|$ to be positive.

    Together we get 
    \begin{equation*}
        \mathbb{P}_{0,x,z_0,z_x,y_0,y_x}\bigl( E_3^{(y_0,y_x)}  \cap F_{n_1}^{(0,z_0,y_0)}\cap F_{n_1}^{(x,z_x,y_x)}\cap E_{0,z_0}^b\cap E_{x,z_x}^b\bigr) \leq \exp(- uc |x|^{\epsilon(\alpha_k-k)-\delta}). 
    \end{equation*}
\end{proof}
Note that the upper bounds from Lemma~\ref{Lemma:three} and Lemma~\ref{Lemma:four} do not depend on $z_0,z_x,y_0$ and $y_x$. Consequently, integrating over these locations and using these uniform bounds gives an upper bound for the last term of \eqref{eq:upperbound_key_calculation}. This implies that the chemical distance for fixed $k\in M$ is given by $\frac{2+\delta}{\log\bigl( \frac{\min\{d-k,k\}}{\alpha_k-k}\bigr)} \log\log|x|$ with high probability as $|x|\rightarrow \infty.$

Finally, we get the claimed upper bound as follows. Recall that $k$ is such that ${\alpha_k\in(k,\min\{2k,d\})}$, i.e.\ $k\in M$. Furthermore, we have proven that the upper bound holds for the ``smallest'' Poisson Boolean model with convex grains. This gives us also an upper bound for the chemical distance for every other Poisson Boolean base model with $\alpha_k\in(k,\min\{d,2k\})$, for any and therefore every $k\in M$.
Together, we get that $\dist(\mathbf{0},\mathbf{x})$ is, under the condition that $0,x\in\mathscr{P}$ and $\mathbf{0}\leftrightarrow \mathbf{x}$, for every $\delta >0$, bounded from above by
\begin{equation*}
    \min\left\{\frac{2+\delta}{\log\bigl( \frac{\min\{d-k,k\}}{\alpha_k-k}\bigr)} \log\log|x| \,:\, k\in M\right\}= \frac{2+\delta}{\log\bigl( \frac{\min\{d-\kappa,\kappa\}}{\alpha_{\kappa}-\kappa}\bigr)} \log\log|x|,
\end{equation*}
with high probability as $|x|\rightarrow \infty$.

\begin{rem}\label{rem:schnell}
For the case where there exists some $k\in\{1,\dots,d\}$ such that $\alpha_k\leq k $ this model dominates (in the standard, almost sure coupling sense; see Remark \ref{rem:langsam}) all other models where we replace $\alpha_k$ by $k+\rho$ for $\rho>0$ in the ``smallest'' box model as in the proof of the upper bound. Looking at the smaller model we have already shown that the upper bound for the chemical distance is given by $(2+\delta)/{\log\bigl( \tfrac{\min\{d-k,k\}}{\rho}\bigr)} \log\log|x|$. With $\rho\downarrow 0$ we get that the factor $(2+\delta)/{\log\bigl( \tfrac{\min\{d-k,k\}}{\rho}\bigr)}$ is getting arbitrary small. With this we get that $\dist(\mathbf{x},\mathbf{y})$ grows in the bigger model faster than $c\log\log|x-y|$ for all $c>0$. 
\end{rem}

\section{Examples}\label{Sect:examples}
In this section we consider multiple examples of models for which Theorem \ref{Theorem} holds. 
\subsection*{Ellipsoids with long and short axes}
We consider the case of ellipsoids where we have $m$ short axes of length one and $d-m$ long axes of length $R$ for $m\in\{1,\dots,d-1\}$ and $R$ regularly varying with index $-\alpha$ for $\alpha>0$. It was shown in \cite{Gracar2024a} that the model is robust if $\alpha < \min\{2(d-m),d\}.$ We get by definition that $\alpha_k=\alpha_{d-m}$ for all $k\in\{1,\dots,d-m\}$ and Theorem \ref{Theorem} gives us
\begin{equation*}
    \lim\limits_{|x-y|\rightarrow \infty}\mathbb{P}_{x,y}\Bigl( \frac{2-\delta} {\log\bigl( \frac{\min\{m, d-m \}}{\alpha_{d-m}-d+m}\bigr) }\, \leq \,\frac{\dist(\mathbf{x},\mathbf{y})}{\log\log|x-y|} \, \leq\,  \frac{2+\delta} {\log\bigl( \frac{\min\{m, d-m \}}{\alpha_{d-m}-d+m} \bigr)}\, \, \Big|\, \, x\leftrightarrow y \Bigr) =1,
\end{equation*}
for $x,y\in\mathscr{P}$ and small $\delta>0$. 
In particular this theorem improves both the upper and lower bounds shown in \cite{Hilario2021}, where Hilário and Ungaretti considered the special case, namely $d=2$ and $m=1$ with ellipses such that the diameters fulfil $\mathbb{P}(D^{_{(1)}} \geq r) =cr^{-\alpha}$ for $c>0,\, \alpha\in (1,2)$ and $D^{_{(2)}}=1$ almost surely.
\subsection*{Ellipsoids with independent axes}
We consider the model of random ellipsoids generated via independent random radii $R_1,\dots,R_d$ with regularly varying tail indices $-\beta_1,\dots,-\beta_d$ for $\beta_d\geq\dots\geq\beta_1>0$. Using independence of these radii and the definition of the diameters we have that the $k$th diameter $D^{_{(k)}}$ is regularly varying with tail index $\smash{\sum_{i=1}^k \beta_i}$, which can be obtained by using properties of regularly varying functions. In \cite{Gracar2024a} it is shown that the model is robust if there exists $1\leq i\leq d$ such that $\beta _i <2$. In addition to that we assume that $\min_{1\leq i\leq d} \beta_i >1$ to guarantee that $\alpha_k> k$ for all $1\leq k\leq d$. Define now as in Theorem \ref{Theorem} $M:=\bigr\{s\in\{1,\dots,d-1\}:\, \sum_{i=1}^s \beta_i\in(s,\min\{2s,d\})\bigr\}$ and recall the definition of $\kappa$. We get then that
\begin{equation*}
    \mathbb{P}_{x,y}\Biggl( \frac{2-\delta} {\log \frac{\min\{\kappa, d-\kappa \}}{-\kappa+\sum_{i=1}^{\kappa}\beta_i} }\, \leq \,\frac{\dist(\mathbf{x},\mathbf{y})}{\log\log|x-y|} \, \leq\,  \frac{2+\delta} {\log \frac{\min\{\kappa, d-\kappa \}}{-\kappa+\sum_{i=1}^{\kappa}\beta_i} }\, \, \Bigg|\, \, x\leftrightarrow y \Biggr) \overset{|x-y|\rightarrow\infty}{\longrightarrow}1.
\end{equation*}
for $x,y\in\mathscr{P}$ while $\delta>0$. 
\subsection*{Ellipsoids with strongly dependent axes}

    Consider the model where the ellipsoids have axes of length $U^{-\beta_k}$ with $k\in\{1,\dots,d\}$, $U$ uniformly distributed on $(0,1)$, and $\beta_d\geq ..\geq \beta_1\geq 0$. For $x>1$ we get that
    \begin{equation*}
        \mathbb{P}(U^{-\beta_k} \geq x) = \mathbb{P}( U < x^{-1/\beta_k})= x^{-\frac{1}{\beta_k}},
    \end{equation*}
    i.e.\ the diameter $D^{_{(k)}}$ has regularly varying tail distribution with index $-1/\beta_{d-k+1}$ for $k\in\{1,\dots,d\}$.  
    In \cite{Gracar2024a} it was shown that the model is robust if $\beta_{d-k+1} > \max\{\frac{1}{2k},\frac{1}{d}\}$. Setting $M:= \bigl\{s\in\{1,\dots,d-1\}:\, \beta_{d-k+1}^{-1} \in(k,\min\{2k,d\})\bigr\}$ and $\kappa$ as in Theorem \ref{Theorem}, we get
    \begin{equation*}
    \mathbb{P}_{x,y}\Biggl( \frac{2-\delta} {\log \frac{\min\{\kappa, d-\kappa \}}{\beta_{d-\kappa+1}^{-1}-\kappa} }\, \leq \,\frac{\dist(\mathbf{x},\mathbf{y})}{\log\log|x-y|} \, \leq\,  \frac{2+\delta} {\log \frac{\min\{\kappa, d-\kappa \}}{\beta_{d-\kappa+1}^{-1}-\kappa} }\, \, \Bigg|\, \, x\leftrightarrow y \Biggr) \overset{|x-y|\rightarrow\infty}{\longrightarrow}1,
\end{equation*}
for $x,y\in\mathscr{P}$ while $\delta>0$. 
\subsection*{Random triangles}
Let $R>1$ be random with regularly varying tail of index $-\alpha$, for $\alpha >0$. Take the \emph{right} triangle $K\subset 
\mathbb R^2$ such that $R$ is the length of the hypotenuse and $\mathrm{Vol}(K)=\frac14 R^{1+\beta}$, for some $\beta\in(0,1)$. Note that this describes the triangle uniquely up to symmetries. Now choose the origin uniformly as one of the vertices of $K$
and let $C$ be the random set obtained by a uniform rotation around this point. We have therefore that $D^{_{(1)}}$ is the length of the hypotenuse, i.e.\ $D^{_{(1)}}=R$. Moreover $D^{_{(2)}}=\frac{1}{2}R^{\beta}$. In \cite{Gracar2024a} it was shown that the model is robust if $\alpha <2$. Considering the case $\alpha\in(1,2)$ to ensure the model is sparse we get from Theorem \ref{Theorem}
    \begin{equation*}
    \mathbb{P}_{x,y}\Biggl( \frac{2-\delta} {\log \frac{1}{\alpha-1} }\, \leq \,\frac{\dist(\mathbf{x},\mathbf{y})}{\log\log|x-y|} \, \leq\,  \frac{2+\delta} {\log \frac{1}{\alpha-1} }\, \, \Bigg|\, \, x\leftrightarrow y \Biggr) \overset{|x-y|\rightarrow\infty}{\longrightarrow}1.
\end{equation*}
for $x,y\in\mathscr{P}$ and $\delta>0$ small. 

\bigskip

\noindent\textbf{Acknowledgement:} We would like to thank Peter Mörters for the fruitful discussions that led to this work.
%

\bibliographystyle{plain}
\bibliography{library.bib}

\end{document}